\newtheorem{thm}{Theorem}[section]
\newtheorem{prop}{Proposition}[section]
\newtheorem{rmk}{Remark}[section]
\newtheorem{lem}{Lemma}[section]
\newtheorem{defi}{Definition}[section]
\newtheorem{algo}{Algorithm}
\newcommand{\veps}{v^{\varepsilon}} 
\newcommand{\peps}{p^{\varepsilon}} 
\newcommand{\Veps}{V_{\varepsilon}}
\newcommand{\Peps}{\Pi_{\varepsilon}}
\newcommand{\Vkh}{\mathcal{V}_{\varepsilon, k, h}}
\newcommand{\Pkh}{\mathit{\Pi}_{\varepsilon, k, h}}
\newcommand{\Zkh}{\mathcal{Z}_{\varepsilon, k, h}}
\newcommand{\Ueps}{U_{\varepsilon}}
\newcommand{\pheps}{p_{\varepsilon}}
\DeclareRobustCommand{\rchi}{{\mathpalette\irchi\relax}}
\newcommand{\irchi}[2]{\raisebox{\depth}{$#1\chi$}}
\newenvironment{proof}{\paragraph{Proof:}}{\hfill$\square$ \\}
\numberwithin{equation}{section}
\newcommand\blfootnote[1]{%
	\begingroup
	\renewcommand\thefootnote{}\footnote{#1}%
	\addtocounter{footnote}{-1}%
	\endgroup
}
\providecommand{\keywords}[1]
{
	\small	
	\textbf{\textit{Keywords:}} #1
}
\title{Numerical approximation of the stochastic Navier-Stokes equations through artificial compressibility \blfootnote{The author is supported by a public grant as part of the Investissement d'avenir project [ANR-11-LABX-0056-LMH, LabEx LMH], and is part of the SIMALIN project [ANR-19-CE40-0016] of the French National Research Agency.\newline Email address: \texttt{jad.doghman@centralesupelec.fr}}}
\author[*]{Jad Doghman}
\affil[ ]{CNRS, Fédération de Mathématiques de CentraleSupélec FR 3487, Univ. Paris-Saclay, CentraleSupélec, 91190 Gif-sur-Yvette, France}
\date{}
\begin{document}
	\maketitle
	\begin{abstract}
		A constructive numerical approximation of the two-dimensional unsteady stochastic Navier-Stokes equations of an incompressible fluid is proposed via a pseudo-compressibility technique involving a parameter $\varepsilon$. Space and time are discretized through a finite element approximation and an Euler method. The convergence analysis of the suggested numerical scheme is investigated throughout this paper. It is based on a local monotonicity property permitting the convergence toward the unique strong solution of the Navier-Stokes equations to occur within the originally introduced probability space. Justified optimal conditions are imposed on the parameter $\varepsilon$ to ensure convergence within the best rate.
	\end{abstract}
	
	\keywords{stochastic Navier-Stokes, multiplicative noise, Penalty method, finite element, Euler method}
	
	\section{Introduction}
	The first thought that springs to mind when it comes to the numerical simulation of the Navier-Stokes equations (NSEs) is the complexity of the occurring situation, which can be represented by turbulent behaviors and physical processes by which energy becomes not only unavailable but irrecoverable in any form. The notorious NSEs are widely-known for their essential role in modeling phenomena that emerge from aeronautical science, thermo-hydraulics, ocean dynamics, and so on. They read in this paper's context:
	\begin{equation}\label{eq Navier-Stokes}
		\begin{cases}
			\partial_{t}v - \nu\Delta v + [v\cdot\nabla]v + \nabla p = f + g(v)\dot{W}, \\
			div (v) = 0,\\
			v(0, \cdot) = v_{0},
		\end{cases}
	\end{equation}
	with $v = v(\omega, t, x)$ being the fluid velocity, $p = p(\omega, t, x)$ is the pressure, $f = f(\omega, t, x)$ embodies an external force, $g$ represents the diffusion coefficient, and $\nu > 0$ is the fluid kinematic viscosity. The term $W$ is regarded as a Wiener process admitting a trace-class covariance operator, with the notation $\dot{W} = \partial_{t}W(\omega, t,x)$.
	
	The present paper deals with numerical approximations of the two-dimensional incompressible NSEs driven by multiplicative noise, equipped with homogeneous Dirichlet boundary conditions, within a bounded polygonal domain of $\mathbb{R}^{2}$. Since the construction of divergence-free subspaces is not an effortless task (see for instance \cite{Bonizzoni2021, evans2013isogeometric, Neilan2016Michael}), the attention will be turned toward a variant of the underlying equations involving a pseudo-compressibility method, avoiding divergence-free fields, and owning the unique, strong solution of the NSEs when passing to the limit, under a few assumptions. To be more accurate, the model which will undergo the discretization later on satisfies:
	\begin{equation}\label{eq Navier-Stokes modified}
		\begin{cases}
			\partial_{t}\veps - \nu \Delta \veps + [\veps\cdot\nabla]\veps + \frac{1}{2}[div \ \veps]\veps + \nabla \peps = f + g(\veps)\dot{W},\\
			\varepsilon\partial_{t}\peps + div \ \veps = 0,\\
			\big(\veps(0, \cdot), \peps(0, \cdot)\big) = \left(v_{0}, p_{0}\right),
		\end{cases}
	\end{equation}
	where $\veps$ and $\peps$ are the associated fluid velocity and pressure, respectively. The parameter $\varepsilon > 0$ represents a small scale that will eventually tend to zero with the other discretization parameters to recover a solution to equations~\eqref{eq Navier-Stokes}, and $(v_{0}, p_{0})$ is the initial condition. The supplementary term $\frac{1}{2}[div \ \veps]\veps$ ensures the well-posedness of the model~\eqref{eq Navier-Stokes modified}, which is why it cannot be taken out. Notice that alternative configurations (also known as penalty methods) might have been possible, especially for the mass conservation equation of problem~\eqref{eq Navier-Stokes modified}. For instance,
	\begin{align*}
		&\varepsilon \peps + div \ \veps = 0, \\&
		\varepsilon\Delta\peps - div \ \veps = 0 \ \mbox{ with } \ \frac{\partial\peps}{\partial n} = 0, \\&
		\varepsilon\Delta\partial_{t}\peps - div \ \veps = 0 \ \mbox{ with } \ \frac{\partial}{\partial n}(\partial_{t}\peps) = 0, \mbox{and } \peps(0, \cdot) = p_{0}.
	\end{align*}
	The reader may refer to \cite{Temam1968}, \cite{Jie1996}, \cite{Temam1969}, and \cite{shen1992pressure} for thorough deterministic studies of the above mentioned techniques, including the one considered here.
	
	The mass conservation equation in problem~\eqref{eq Navier-Stokes modified} returns, in terms of regularity, good a priori estimates for the pressure $\peps$ (see \cite[Proposition 3.1]{Menaldi2008Sritharan}), which may be taken advantage of during the convergence rate analysis. In point of fact, the pressure's lack of time-regularity in equations~\eqref{eq Navier-Stokes} (see for instance \cite[Theorem 4.1]{pressure2003}) has a negative effect on the convergence rate of those equations, which appears through the time-rate $O(\Delta t^{-1})$, as it was illustrated in \cite[Corollary 4.2]{carelli2012rates}.
	
	Problem~\eqref{eq Navier-Stokes modified} was theoretically investigated in \cite{Menaldi2008Sritharan} where the authors conducted the existence and uniqueness properties of the associated solution. The proof technique therein consists of the local monotonicity property of the sum of the Stokes operator and the nonlinear term. A discrete version of this method will be considered in the present paper in order to demonstrate the convergence of the proposed numerical scheme and to avoid the Skorokhod theorem as well.
	
	This paper is split into five sections and is organized as follows. Section~\ref{section notations and preliminaries} provides the adequate preliminaries and configurations, including the required assumptions, solutions' definitions to problems~\eqref{eq Navier-Stokes}, \eqref{eq Navier-Stokes modified}, and the numerical scheme. Section~\ref{section main result} is devoted to giving the main theorem of this paper. Solvability, stability, and convergence of the numerical approximation are given in Section~\ref{section discussion} along with a linear version of the proposed numerical scheme. This same section grants a small analysis scope concerned with the best choice of the scale $\varepsilon$ in terms of the discretization parameters. The present paper ends with Section~\ref{section numerical exp.} whose role is to supply the reader with pieces of evidence through numerical experiments.
	
	\section{Notations, materials and algorithm}\label{section notations and preliminaries}
	Let $T > 0$ be a finishing time. Given a bounded polygonal domain $D \subset \mathbb{R}^{2}$ (for simplicity's sake), denote by $\partial D$ its boundary, and by $\vec{n}\colon \partial D \to \mathbb{R}^{2}$ its corresponding unit outward normal vector field. Function spaces in the Navier-Stokes framework are commonly denoted by $\mathbb{H}$ and $\mathbb{V}$ and are defined by
	\begin{align*}
		&\mathcal{V} \coloneqq \bigg\{z \in [C_{c}^{\infty}(D)]^{2} \ \big| \ div(z) = 0 \mbox{ in } D\bigg\},
		\\&\mathbb{H} \coloneqq \bigg\{z \in \left(L^{2}(D)\right)^{2} \ \big| \ div \ z = 0 \mbox{ a.e. in } D, \ z.\vec{n} = 0 \mbox{ a.e. on } \partial D\bigg\},
		\\&\mathbb{V} \coloneqq \bigg\{z \in \left(H^{1}_{0}(D)\right)^{2} \ \big| \ div \ z = 0 \mbox{ a.e. in } D\bigg\},
	\end{align*}
	where $C_{c}^{\infty}(D)$ denotes the space of $C^{\infty}(D)$ functions with compact support.
	The vector spaces will be henceforth indicated by blackboard bold letters for clarity's sake (e.g. $\mathbb{H}^{1} = \left(H^{1}(D)\right)^{2}$). The inner product of the Lebesgue space $\mathbb{L}^{2}$ and the duality product between $\mathbb{H}^{1}_{0}$ and $\mathbb{H}^{-1}$ are denoted by $\left(\cdot, \cdot\right)$ and $\langle \cdot, \cdot \rangle$, respectively. The parameter $\varepsilon$ of equation~\eqref{eq Navier-Stokes modified} satisfies all this paper long the condition $\varepsilon \leq 1$, the Gelfand triple $\left(\mathbb{H}_{0}^{1}, \mathbb{L}^{2}, \mathbb{H}^{-1}\right)$ will solely be employed, and the trilinear form
	\begin{equation*}
		\hat{b}(u, v, w) \coloneqq \left([u\cdot\nabla]v, w\right) + \frac{1}{2}\left([div \ u]v, w\right)
	\end{equation*}
	will be linked with equation~\eqref{eq Navier-Stokes modified}. Two operators can be associated with $\hat{b}$; the trilinear form $\displaystyle b(u, v, w) \coloneqq \left([u\cdot\nabla]v, w\right)$ that arises from the NSEs and the bilinear operator $\hat{B}\colon \mathbb{H}^{1}_{0}\times \mathbb{H}^{1}_{0} \to \mathbb{H}^{-1}$ which reads: $\displaystyle \langle\hat{B}(u, v), w\rangle = \hat{b}(u, v, w)$, for all $u, v, w \in \mathbb{H}_ {0}^{1}$. The upcoming proposition lists a few properties of the trilinear form $\hat{b}$ (cf. \cite{Jie1996}).
	\begin{prop}\label{prop trilinear form}
		\begin{enumerate}[label=(\roman*)]
			\item $\displaystyle \hat{b} \colon \mathbb{H}^{1}_{0}\times \mathbb{H}_{0}^{1}\times \mathbb{H}^{1}_{0} \to \mathbb{R}$ is continuous. \label{trilinear i}
			\item $\displaystyle \hat{b}(u, v, v) = 0$ for all $u, v \in \mathbb{H}^{1}_{0}$.\label{trilinear ii}
			\item $\displaystyle \left|\hat{b}(u, v, w)\right| \leq C_{D}\left|\left|u\right|\right|_{\mathbb{L}^{2}}^{\frac{1}{2}}\left|\left|\nabla u\right|\right|_{\mathbb{L}^{2}}^{\frac{1}{2}}\left|\left|\nabla v\right|\right|_{\mathbb{L}^{2}}\left|\left|\nabla w\right|\right|_{\mathbb{L}^{2}}$, for all $u, v, w \in \mathbb{H}_{0}^{1}$. \label{trilinear iii}
		\end{enumerate}
	\end{prop}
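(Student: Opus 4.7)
The plan is to reduce everything to the classical trilinear form $b(u,v,w) = ([u\cdot\nabla]v,w)$ by exhibiting the skew-symmetric identity
\begin{equation*}
\hat{b}(u,v,w) = \tfrac{1}{2}\bigl[b(u,v,w) - b(u,w,v)\bigr], \qquad u,v,w \in \mathbb{H}^{1}_{0},
\end{equation*}
from which items \ref{trilinear i}, \ref{trilinear ii} and \ref{trilinear iii} follow by standard 2D machinery. First I would establish this identity: integration by parts on the component-wise expansion gives
\begin{equation*}
\bigl([\mathrm{div}\,u]\,v,w\bigr) = \int_{D} (\partial_{i}u_{i})\,v_{j}w_{j}\,dx = -\int_{D} u_{i}\,\partial_{i}(v_{j}w_{j})\,dx + \int_{\partial D}(u\cdot\vec{n})\,(v\cdot w)\,dS,
\end{equation*}
and the boundary integral vanishes since $u \in \mathbb{H}^{1}_{0}$. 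Expanding $\partial_{i}(v_{j}w_{j})$ yields $([\mathrm{div}\,u]\,v,w) = -b(u,v,w) - b(u,w,v)$, which plugged into the definition of $\hat{b}$ gives the claimed identity.

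With this identity, item \ref{trilinear ii} is immediate: setting $w=v$ forces $b(u,v,v) - b(u,v,v) = 0$. For item \ref{trilinear iii}, by Hölder's inequality one bounds $|b(u,v,w)| \leq \|u\|_{\mathbb{L}^{4}}\,\|\nabla v\|_{\mathbb{L}^{2}}\,\|w\|_{\mathbb{L}^{4}}$. The two-dimensional Ladyzhenskaya inequality yields $\|z\|_{\mathbb{L}^{4}} \leq C\,\|z\|_{\mathbb{L}^{2}}^{1/2}\,\|\nabla z\|_{\mathbb{L}^{2}}^{1/2}$ for every $z \in \mathbb{H}^{1}_{0}$, and Poincaré's inequality turns $\|w\|_{\mathbb{L}^{2}}^{1/2}\|\nabla w\|_{\mathbb{L}^{2}}^{1/2}$ into $C\,\|\nabla w\|_{\mathbb{L}^{2}}$. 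Combined, this gives
\begin{equation*}
|b(u,v,w)| \leq C_{D}\,\|u\|_{\mathbb{L}^{2}}^{1/2}\,\|\nabla u\|_{\mathbb{L}^{2}}^{1/2}\,\|\nabla v\|_{\mathbb{L}^{2}}\,\|\nabla w\|_{\mathbb{L}^{2}},
\end{equation*}
and an identical bound holds for $b(u,w,v)$ (since the estimate is symmetric in the second and third argument up to constants, via Poincaré). Adding the two and applying the skew-symmetric identity yields \ref{trilinear iii}. Finally, item \ref{trilinear i} is a direct corollary of \ref{trilinear iii}: the estimate shows that $\hat{b}$ is a bounded multilinear form on $\mathbb{H}^{1}_{0}\times\mathbb{H}^{1}_{0}\times\mathbb{H}^{1}_{0}$, hence continuous.

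The only mildly delicate step is the integration by parts establishing the skew-symmetric identity — one must track the index contractions carefully and be sure that the boundary integral vanishes. Everything else reduces to textbook 2D Sobolev estimates. I would also remark that the symmetrizing term $\tfrac{1}{2}[\mathrm{div}\,u]v$ is precisely engineered so that $\hat{b}(u,v,v)=0$ without requiring $\mathrm{div}\,u=0$; this is the algebraic reason the pseudo-compressibility system~\eqref{eq Navier-Stokes modified} inherits the energy cancellation enjoyed by the original NSEs.
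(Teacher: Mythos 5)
Your proof is correct, and it is essentially the standard argument: the paper itself gives no proof of this proposition, deferring entirely to the cited reference \cite{Jie1996}, where the same skew-symmetrization identity $\hat{b}(u,v,w)=\tfrac{1}{2}\bigl[b(u,v,w)-b(u,w,v)\bigr]$ (obtained by integrating the divergence term by parts, with the boundary term killed by the zero trace of $u$) together with H\"older, the 2D Ladyzhenskaya inequality and Poincar\'e yields (ii), (iii) and hence (i). Your reconstruction fills in exactly what the citation suppresses, and your closing remark correctly identifies why the $\tfrac{1}{2}[\mathrm{div}\,u]v$ correction is there.
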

	Let $\left(\Omega, \mathcal{F}, \mathbb{P}\right)$ be a probability space endowed with a filtration $\{\mathcal{F}_{t}\}_{0 \leq t\leq T}$ such that $\mathcal{F}_{0}$ contains all the null sets and $\displaystyle \mathcal{F}_{t} = \bigcap_{s > t}\mathcal{F}_{s}$.
	Let $K$ be a separable Hilbert space equipped with a complete orthonormal basis $\{w_{k}\}_{k \geq 1}$, and $Q$ be a nuclear operator on $K$ such that $w_{k}$ is an eigenvector of $Q$ for all $k \geq 1$. The noise term $W$ will be considered hereafter as a $Q$-Wiener process and it is defined by $W(t, x) = \sum_{k \geq 1}\sqrt{q_{k}}\beta^{k}(t)w_{k}(x)$, where $\beta^{k}(\cdot), k \in \mathbb{N}$ is a sequence of independent and identically distributed real-valued Brownian motions, and $q_{k}$, $k \geq 1$ is the eigenvalue of $Q$ that is associated with $w_{k}$. With that said, the required assumptions are listed below.
	\paragraph{\textbf{Assumptions}}
	\begin{enumerate}[label=$(S_{\arabic*})$]
		\item $Q \colon K \to K$ is a symmetric positive definite nuclear operator. \label{S1}
		\item For $p \in [2, +\infty)$, $\displaystyle v_{0} \in L^{2^{p}}(\Omega; \mathbb{L}^{2})$ and $p_{0} \in L^{2^{p}}(\Omega; L^{2}(D))$ are $\mathcal{F}_{0}$-measurable.\label{S2}
		\item For $p \in [1, +\infty)$, $\displaystyle f \in L^{2^{p}}(\Omega; L^{2}(0,T; \mathbb{H}^{-1}))$ and $\displaystyle g \colon \mathbb{L}^{2} \to \mathscr{L}_{2}(K, \mathbb{L}^{2})$ satisfies
		\begin{align*}
			&\left|\left|g(u) - g(v)\right|\right|_{\mathscr{L}_{2}(\sqrt{Q}(K), \mathbb{L}^{2})} \leq L_{g}\left|\left|u - v\right|\right|_{\mathbb{L}^{2}}, \ \ \ \forall u, v \in \mathbb{L}^{2},\\&
			\left|\left|g(u)\right|\right|_{\mathscr{L}_{2}(K,\mathbb{L}^{2})} \leq K_{1} + K_{2}\left|\left|u\right|\right|_{\mathbb{L}^{2}}, \ \ \ \forall u \in \mathbb{L}^{2},
		\end{align*}
		for some positive time-independent constants $K_{1}, K_{2}, L_{g}$ such that $L_{g} \leq \sqrt{\frac{\nu}{2C^{2}_{P}}}$, where $C_{P}$ is the Poincar{\'e} constant. \label{S3}
	\end{enumerate}
	
	Throughout this paper, the writing $x\lesssim y$ designates $x \leq c y$ for a universal constant $c \geq 0$, the constant $C_{D}$ may vary from one calculation to another; however, it will depend only on the domain $D$, and finally the symbol $\mathscr{L}_{2}(X, Y)$ refers to the space of Hilbert-Schmidt operators from $X$ to $Y$, where $X$ and $Y$ are two Hilbert spaces.
	
	To derive high-order moment estimates, the following Jensen-type inequality will be needed hereafter: for all $J, p \in \mathbb{N}\backslash\{0\}$, and for all real-valued sequence $(\alpha_{n})_{n}$,
	\begin{equation}\label{eq sum-Jensen-type}
		\left(\sum_{n=1}^{J}\left|\alpha_{n}\right|\right)^{2^{p}} \leq 3^{2^{p}-1}\sum_{n=1}^{J}\left|\alpha_{n}\right|^{2^{p}}.
	\end{equation}
	The proof of inequality~\eqref{eq sum-Jensen-type} follows from an induction argument applied on $\left(\sum_{n=1}^{J}|\alpha_{n}|\right)^{2} \leq 3\sum_{n=1}^{J}|\alpha_{n}|^{2}$.
	\subsection{Concept of solutions}
	According to \cite{Menaldi2008Sritharan}, a solution to equations~\eqref{eq Navier-Stokes modified} satisfies the following definition.
	\begin{defi}\label{definition NSE slightly comp. sol}
		Given a filtered probability space $\left(\Omega, \mathcal{F}, \mathcal{F}_{t}, \mathbb{P}\right)$, a stochastic process $\veps$ is said to be a strong solution to equations~\eqref{eq Navier-Stokes modified} under assumptions \ref{S1}-\ref{S3} if it belongs to $L^{2}\left(\Omega; C([0,T]; \mathbb{L}^{2}) \cap L^{2}(0,T; \mathbb{H}^{1}_{0})\right)$, and it satisfies for all $t \in [0,T]$, $\mathbb{P}$-a.s.
		\begin{equation*}
			\begin{cases}
				\begin{aligned}
					&\left(\veps(t), \varphi\right) + \nu\int_{0}^{t}\left(\nabla \veps(s), \nabla\varphi\right)ds + \int_{0}^{t}\hat{b}(\veps(s), \veps(s), \varphi)ds - \int_{0}^{t}\left(\peps(s), div \ \varphi\right)ds \\&\hspace{20pt}= \left(v_{0}, \varphi\right) + \int_{0}^{t}\langle f(s), \varphi\rangle ds + \left(\int_{0}^{t}g(\veps(s))dW(s), \varphi\right), \ \ \forall \varphi \in \mathbb{H}^{1}_{0},
				\end{aligned}\\
				\left(\varepsilon\partial_{t}\peps(t) + div \ \veps(t), q\right) = 0, \ \ \forall q \in L^{2}(D),
			\end{cases}
		\end{equation*}
		along with the energy inequality
		\begin{equation*}
			\begin{aligned}
				\mathbb{E}\left[\sup\limits_{0 \leq t \leq T}\left(\left|\left|\veps(t)\right|\right|_{\mathbb{L}^{2}}^{p} + \varepsilon\left|\left|\peps(t)\right|\right|_{\mathbb{L}^{2}}^{p}\right)e^{-\delta t} + p\nu\int_{0}^{T}\left|\left|\nabla \veps(t)\right|\right|_{\mathbb{L}^{2}}^{2}\left|\left|\veps(t)\right|\right|^{p-2}_{\mathbb{L}^{2}}e^{-\delta t}dt\right] \leq \mathcal{C},
			\end{aligned}
		\end{equation*}
		for all $p \in [2, +\infty)$, $\varepsilon>0$, $\delta>0$, and for some constant $\mathcal{C} > 0$ depending on $\delta, p, T, v_{0}, p_{0}, f, K_{1}, K_{2}$ and $\varepsilon$.
	\end{defi}
	On the other hand, a solution to problem~\eqref{eq Navier-Stokes} in $2$D can be defined as follows.
	\begin{defi}\label{definition NSE sol}
		Assume \ref{S1}-\ref{S3} and let $T > 0$. A stochastic process $v$ on a given filtered probability space $\left(\Omega, \mathcal{F}, (\mathcal{F}_{t})_{0\leq t \leq T}, \mathbb{P}\right)$ is a strong solution to equations~\eqref{eq Navier-Stokes} if it belongs to $L^{2}\left(\Omega; C([0,T]; \mathbb{H}) \cap L^{2}(0,T; \mathbb{V})\right)$, and it fulfills for all $0 \leq t \leq T$, $\mathbb{P}$-a.s.
		\begin{equation*}
			\begin{aligned}
				&\left(v(t), \varphi\right) + \nu\int_{0}^{t}\left(\nabla v(s), \nabla \varphi\right)ds + \int_{0}^{t}\left([v(s)\cdot\nabla]v(s), \varphi\right)ds \\&= \left(v_{0}, \varphi\right) + \int_{0}^{T}\langle f(s), \varphi \rangle ds + \left(\int_{0}^{t}g(v(s))dW(s), \varphi\right), \ \ \forall \varphi \in \mathbb{V}.
			\end{aligned}
		\end{equation*}
	\end{defi}
	
	\subsection{Discretization}\label{subsection discretiztion}
	The time interval $[0,T]$ will be decomposed into $M \in \mathbb{N}\backslash\{0\}$ subintervals with equidistant nodes $\{t_{\ell}\}_{\ell=0}^{M} \eqqcolon \mathscr{I}_{k}$ for simplicity's sake. The corresponding step is denoted $k \coloneqq \frac{T}{M}$.
	
	The spatial domain $D$, which is assumed to be convex, bounded and polygonal, will be covered by a quasi-uniform triangulation $\mathcal{T}_{h}$, with $h$ being the diameters' maximum of all triangles. Let $\mathbb{H}_{h}$ be a subspace of $\mathbb{H}_{0}^{1}$ consisting of $[C(\bar{D})]^{2}$-valued piecewise polynomials over $\mathcal{T}_{h}$, and fulfilling for all $m \geq 2$:
	\begin{equation}\label{eq FE error estimate1}
		\inf\limits_{v_{h} \in \mathbb{H}_{h}}\left\{\left|\left|v - v_{h}\right|\right|_{\mathbb{L}^{2}} + h\left|\left|\nabla (v - v_{h})\right|\right|_{\mathbb{L}^{2}}\right\} \leq Ch^{m}\left|\left|v\right|\right|_{\mathbb{H}^{m}}, \ \ \forall v \in \mathbb{H}_{0}^{1}\cap \mathbb{H}^{m}.
	\end{equation}
	The quasi-uniformity of $\mathcal{T}_{h}$ permits the inverse inequality (cf. \cite[Lemma 4.5.3]{brenner2007mathematical}):
	\begin{equation}\label{eq inverse inequality}
		\left|\left|v_{h}\right|\right|_{\mathbb{H}^{\ell}} \leq \mathscr{C}h^{m-\ell}\left|\left|v_{h}\right|\right|_{\mathbb{H}^{m}}, \ \forall v_{h} \in \mathbb{H}_{h}, \ \forall \ 0 \leq m \leq \ell,
	\end{equation}
	for some $\mathscr{C} > 0$ independent of $h$.
	Let $L_{h}$ be a subspace of $L_{0}^{2}(D)$ consisting of $C(\bar{D})$ piecewise polynomial functions over $\mathcal{T}_{h}$, and satisfying for all $m \geq 1$:
	\begin{equation}\label{eq FE error estimate2}
		\inf\limits_{p_{h} \in L_{h}}\left|\left|p - p_{h}\right|\right|_{\mathbb{L}^{2}} \leq Ch^{m}\left|\left|p\right|\right|_{\mathbb{H}^{m}}, \ \ \forall p \in L_{0}^{2}(D) \cap H^{m}(D).
	\end{equation}
	For $(v, p) \in \mathbb{L}^{2}\times L^{2}(D)$, the associated orthogonal projections are denoted $\Pi_{h} \colon \mathbb{L}^{2} \to \mathbb{H}_{h}$ and $\rho_{h} \colon L^{2}(D) \to L_{h}$ and are defined by the following identities, respectively:
	\begin{equation}\label{def projection}
		\left(v - \Pi_{h}v, \varphi_{h}\right) = 0, \ \forall \varphi_{h} \in \mathbb{H}_{h} \ \mbox{ and }\ \left(p - \rho_{h}p, q_{h}\right) = 0, \ \forall q_{h} \in L_{h}.
	\end{equation}
	Thanks to the pseudo-compressibility method which is provided by equations~\eqref{eq Navier-Stokes modified}, the finite element pair $\left(\mathbb{H}_{h}, L_{h}\right)$ is not forced to satisfy the discrete LBB condition.
	
	For the sake of clarity, the notations $\varphi^{+}$ and $\varphi^{-}$ will designate throughout this paper piecewise constant functions with respect to time. For instance, $\varphi^{+}(t) \coloneqq \varphi^{m}, \forall t \in (t_{m-1}, t_{m}]$ and $\varphi^{-}(t) \coloneqq \varphi^{m-1}, \forall t \in [t_{m-1}, t_{m})$ for the a given sequence $\{\varphi^{m}\}_{m}$. The discrete derivation with respect to time will also intervene later on. For this purpose, the below proposition (cf. \cite[Appendix B]{brzezniak2013finite}) lists a few associated properties.
	\begin{prop}\label{prop discrete derivation}
		Given a sequence $\{\varphi^{m}\}_{m}$, the discrete derivative is defined by $d_{t}\varphi^{m} = \frac{\varphi^{m} - \varphi^{m-1}}{k}$, for all $m \in \{1, \dotsc, M\}$, and it fulfills the following assertions:
		\begin{enumerate}[label=(\roman*)]
			\item $\displaystyle d_{t}(\varphi^{+}\psi^{+}) = \varphi^{+}d_{t}\psi^{+} + \psi^{-}d_{t}\varphi^{+}$,
			\item $\displaystyle\int_{0}^{T}\varphi^{+}d_{t}\psi^{+}dt = \varphi^{+}(T)\psi^{+}(T) - \varphi^{-}(0)\psi^{-}(0) - \int_{0}^{T}\left(d_{t}\varphi^{+}\right)\psi^{-}dt$,
			\item $\displaystyle d_{t}e^{\varphi^{+}} = e^{\varphi^{-}}d_{t}\varphi^{+} + e^{\delta}\frac{\left(\varphi^{+} - \varphi^{-}\right)^{2}}{2k}$, for some $\delta \in (\varphi^{-}, \varphi^{+})$.
		\end{enumerate}
	\end{prop}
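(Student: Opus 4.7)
The plan is to unwind the definitions of $\varphi^{+}, \varphi^{-}$ and $d_{t}$ pointwise on each subinterval $(t_{m-1}, t_{m}]$, and then reduce each of the three assertions to an elementary algebraic identity for finite sequences. I do not expect any genuine obstacle here: this is purely a book-keeping result that mirrors the classical continuous product rule, integration by parts, and Taylor expansion of $e^{x}$, transcribed into the piecewise-constant discrete-time formalism.

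For \ref{prop discrete derivation}\textbf{(i)}, I would fix $t \in (t_{m-1}, t_{m}]$ so that $\varphi^{+}(t) = \varphi^{m}$, $\varphi^{-}(t) = \varphi^{m-1}$ and likewise for $\psi$. It then suffices to verify the purely arithmetic identity
\begin{equation*}
\frac{\varphi^{m}\psi^{m} - \varphi^{m-1}\psi^{m-1}}{k} = \varphi^{m}\,\frac{\psi^{m} - \psi^{m-1}}{k} + \psi^{m-1}\,\frac{\varphi^{m} - \varphi^{m-1}}{k},
\end{equation*}
which follows from adding and subtracting $\varphi^{m}\psi^{m-1}$ in the numerator on the left-hand side.

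For \ref{prop discrete derivation}\textbf{(ii)}, I would rewrite both time integrals as Riemann sums. Since $d_{t}\psi^{+}$ is constant equal to $(\psi^{m}-\psi^{m-1})/k$ on $(t_{m-1},t_{m}]$, one has $\int_{0}^{T}\varphi^{+}d_{t}\psi^{+}\,dt = \sum_{m=1}^{M}\varphi^{m}(\psi^{m}-\psi^{m-1})$, and similarly $\int_{0}^{T}(d_{t}\varphi^{+})\psi^{-}\,dt = \sum_{m=1}^{M}\psi^{m-1}(\varphi^{m}-\varphi^{m-1})$. Summing the identity obtained in (i) over $m=1,\dotsc,M$ and multiplying by $k$ yields a telescoping sum on the left equal to $\varphi^{M}\psi^{M}-\varphi^{0}\psi^{0} = \varphi^{+}(T)\psi^{+}(T)-\varphi^{-}(0)\psi^{-}(0)$, which, rearranged, is exactly the claim.

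For \ref{prop discrete derivation}\textbf{(iii)}, I would again work on a single interval $(t_{m-1},t_{m}]$ and apply the second-order Taylor expansion of the function $x \mapsto e^{x}$ with integral/Lagrange remainder around $\varphi^{m-1}$ evaluated at $\varphi^{m}$:
\begin{equation*}
e^{\varphi^{m}} = e^{\varphi^{m-1}} + e^{\varphi^{m-1}}\bigl(\varphi^{m}-\varphi^{m-1}\bigr) + \frac{e^{\delta}}{2}\bigl(\varphi^{m}-\varphi^{m-1}\bigr)^{2},
\end{equation*}
for some $\delta$ strictly between $\varphi^{m-1}$ and $\varphi^{m}$, i.e. $\delta \in (\varphi^{-}(t), \varphi^{+}(t))$. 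Dividing by $k$ and recognising the definition of $d_{t}$ on the left and in the linear term delivers the stated formula. The only point of care is the case $\varphi^{m} = \varphi^{m-1}$, which is trivial since both sides then vanish.
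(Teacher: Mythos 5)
Your proof is correct and is the standard elementary verification: the paper itself gives no proof of this proposition, deferring instead to \cite[Appendix B]{brzezniak2013finite}, and your pointwise unwinding on each subinterval (add-and-subtract for (i), telescoping Riemann sums for (ii), Taylor with Lagrange remainder for (iii)) is exactly the argument that reference relies on. The only minor point, which is harmless, is that $\varphi^{-}$ takes the value $\varphi^{m}$ at the right endpoint $t_{m}$, so the pointwise identities should be read on the open intervals (a null set for the integrals in (ii)), and in (iii) the interval notation $(\varphi^{-}, \varphi^{+})$ is to be read as ``strictly between'' regardless of ordering, as you note.
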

	
	Relying on Definition~\ref{definition NSE slightly comp. sol} and the space-time discretization, the numerical scheme which will be studied throughout the rest of this paper is given by:
	\begin{algo}\label{Algo}
		Let $m \in \{1, \dotsc, M\}$ and $(v^{0}_{h}, p^{0}_{h}) \in \mathbb{H}_{h} \times L_{h}$ be a starting point. For a given $\left(\Veps^{m-1}, \Peps^{m-1}\right) \in \mathbb{H}_{h}\times L_{h}$ such that $\left(\Veps^{0}, \Peps^{0}\right) \coloneqq (v_{h}^{0}, p^{0}_{h}) $, find $\left(\Veps^{m}, \Peps^{m}\right) \in \mathbb{H}_{h} \times L_{h}$ that satisfies
		\begin{equation*}
			\begin{cases}
				\begin{aligned}
					&\left(\Veps^{m} - \Veps^{m-1}, \varphi_{h}\right) + k\nu\left(\nabla \Veps^{m}, \nabla\varphi_{h}\right) + k\hat{b}(\Veps^{m}, \Veps^{m}, \varphi_{h}) - k\left(\Peps^{m}, div\varphi_{h}\right) \\&\hspace{20pt}= k\langle f^{m}, \varphi_{h} \rangle + \left(g(\Veps^{m-1})\Delta_{m}W, \varphi_{h}\right), \ \ \forall \varphi_{h} \in \mathbb{H}_{h},
				\end{aligned}\\
				\frac{\varepsilon}{k}\left(\Peps^{m} - \Peps^{m-1}, q_{h}\right) + \left(div\Veps^{m}, q_{h}\right) = 0, \ \ \forall q_{h} \in L_{h},
			\end{cases}
		\end{equation*}
		where for all $m \in \{1, \dotsc, M\}$, $f^{m} \coloneqq \frac{1}{k}\int_{t_{m-1}}^{t_{m}}f(t)dt$ and $\Delta_{m}W \coloneqq W(t_{m}) - W(t_{m-1})$.
	\end{algo}
	The initial datum $(v^{0}_{h}, p^{0}_{h})$ of Algorithm~\ref{Algo} is required to be uniformly bounded in $\mathbb{L}^{2}\times L^{2}(D)$ with respect to $h$. To this end, it suffices to consider $v_{h}^{0} = \Pi_{h}v_{0}$ and $p_{h}^{0} = \rho_{h}p_{0}$ because both projectors $\Pi_{h}$ and $\rho_{h}$ are stable in $L^{2}$ (cf. \cite{Douglas1974Lars}):
	\begin{equation}\label{eq projection stability}
		\left|\left|\Pi_{h}u\right|\right|_{\mathbb{L}^{2}} \leq \left|\left|u\right|\right|_{\mathbb{L}^{2}}, \ \forall u \in \mathbb{L}^{2} \ \mbox{ and } \ \left|\left|\rho_{h}q\right|\right|_{L^{2}} \leq \left|\left|q\right|\right|_{L^{2}}, \ \forall q \in L^{2}.
	\end{equation}
	Owing to \cite[Lemma III.4.5]{temam2001navier}, there holds
	\begin{equation}\label{eq f estimate}
		k\sum_{m=1}^{M}\left|\left|f^{m}\right|\right|^{2}_{\mathbb{H}^{-1}} \leq \int_{0}^{T}\left|\left|f(t)\right|\right|_{\mathbb{H}^{-1}}^{2}dt.
	\end{equation}
	\section{Main result}\label{section main result}
	\begin{thm}\label{theorem 1}
		For $T > 0$, let $\left(\Omega, \mathcal{F}, (\mathcal{F}_{t})_{0\leq t \leq T}, \mathbb{P}\right)$ be a filtered probability space, $D \subset \mathbb{R}^{2}$ be a polygonal domain, assumptions~\ref{S1}-\ref{S3} be satisfied, $\varepsilon > 0$ be a small scale, and $(k,h)$ be a finite positive pair such that $k$ corresponds to the step of an equidistant time partition $\mathscr{I}_{k}$ and $h$ is associated with a quasi-uniform triangulation $\mathcal{T}_{h}$ of the domain $D$. For a finite element pair $(\mathbb{H}_{h}, L_{h})$, let $(v_{h}^{0}, p_{h}^{0})$ belong to $\mathbb{H}_{h}\times L_{h}$ such that $\left|\left|(v_{h}^{0}, p_{h}^{0})\right|\right|_{\mathbb{L}^{2}\times L^{2}}$ is uniformly bounded in $h$. Then, Algorithm~\ref{Algo} has a solution $\{\Veps^{m}, \Peps^{m}\}_{m=1}^{M}$ satisfying Lemmata~\ref{lemma existence+measurability}, \ref{lemma iterates uniqueness}, \ref{lemma a priori estimates}. Further, if $v_{h}^{0} \to v_{0}$ in $L^{2}(\Omega; \mathbb{L}^{2})$ as $h \to 0$ then, as $\varepsilon, k, h \to 0$, Algorithm~\ref{Algo} converges toward the unique strong solution of equations~\eqref{eq Navier-Stokes} provided $\frac{k}{\varepsilon} \to 0$.
	\end{thm}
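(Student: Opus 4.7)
The proof splits naturally into two tasks. The first three conclusions (existence, measurability, uniqueness of the iterates, and a priori estimates) will be deferred to Lemmata~\ref{lemma existence+measurability}--\ref{lemma a priori estimates} and only the convergence as $\varepsilon,k,h\to 0$ with $k/\varepsilon\to 0$ needs a bespoke argument here. My plan is therefore to assume those three lemmata, build time-continuous interpolants of the iterates, extract a candidate limit by weak compactness, and identify it with the unique strong NSE solution through a discrete local monotonicity argument in the spirit of~\cite{Menaldi2008Sritharan}, thereby circumventing Skorokhod.

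First I would introduce the piecewise constant interpolants $\Vkh^{+}, \Vkh^{-}, \Pkh^{+}, \Pkh^{-}$ together with the piecewise linear interpolant $\Zkh$ built from $\{\Veps^{m}\}$, and rewrite Algorithm~\ref{Algo} as a single time-integrated identity on $[0,T]$. The a priori bounds from Lemma~\ref{lemma a priori estimates} then give, uniformly in $(\varepsilon,k,h)$, control of $\Vkh^{\pm}$ in $L^{2^{p}}(\Omega;L^{\infty}(0,T;\mathbb{L}^{2})\cap L^{2}(0,T;\mathbb{H}^{1}_{0}))$ and of $\sqrt{\varepsilon}\,\Pkh^{\pm}$ in $L^{2^{p}}(\Omega;L^{\infty}(0,T;L^{2}))$. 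The second line of the scheme will read, after summation, $\varepsilon(\Pkh^{+}(t)-\Pkh^{-}(0))+\int_{0}^{t}\operatorname{div}\Vkh^{+}(s)\,ds=0$ up to remainder terms of size $O(k)$, so the bound on $\sqrt{\varepsilon}\,\Pkh^{\pm}$ combined with $k/\varepsilon\to 0$ forces $\operatorname{div}\Vkh^{+}\to 0$ in a suitable negative-order norm. Meanwhile the pressure term $\int_{0}^{t}(\Pkh^{+},\operatorname{div}\varphi_{h})\,ds$, tested against divergence-free $\varphi\in\mathbb V$, will vanish in the limit, which is exactly the mechanism that peels off the pressure to recover the NSE weak form of Definition~\ref{definition NSE sol}.

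The central step is the passage to the limit in the nonlinear convective term $\hat b(\Vkh^{+},\Vkh^{+},\varphi_{h})$. I would follow the discrete local monotonicity strategy: test the scheme against $\Vkh^{+}-\Psi$ for a smooth $\mathbb V$-valued adapted process $\Psi$, apply the discrete Itô formula/chain rule implicit in Proposition~\ref{prop discrete derivation}(iii) to $e^{-\delta t}\|\Vkh^{+}-\Psi\|_{\mathbb{L}^{2}}^{2}$, and exploit the monotonicity bound
$$
\hat b(u,u,u-w)-\hat b(w,w,u-w)+\nu\|\nabla(u-w)\|_{\mathbb{L}^{2}}^{2}\geq -C_{D}\|\nabla w\|_{\mathbb{L}^{2}}^{4}\nu^{-3}\|u-w\|_{\mathbb{L}^{2}}^{2},
$$
which follows from Proposition~\ref{prop trilinear form}\ref{trilinear iii}. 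Combined with the Lipschitz bound on $g$ in~\ref{S3} (whose constant is tuned to absorb the Poincaré factor) and the Jensen-type inequality~\eqref{eq sum-Jensen-type}, this produces after passing to weak/weak-$\star$ limits a variational inequality characterising the limit $v$. Selecting $\Psi=v$ (Minty's trick) upgrades weak to strong convergence and identifies the weak limit of $\hat B(\Vkh^{+},\Vkh^{+})$ with $[v\cdot\nabla]v$, because in the limit $v\in\mathbb V$ makes the symmetrising term $\tfrac{1}{2}[\operatorname{div}\,v]v$ disappear.

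The hardest obstacle, and the step that deserves the most care, is precisely this monotonicity/Minty manoeuvre performed on the \emph{original} probability space: one must show that the stochastic integral $\int_{0}^{\cdot}g(\Vkh^{-})\,dW$ converges to $\int_{0}^{\cdot}g(v)\,dW$ strongly in $L^{2}(\Omega;L^{2}(0,T;\mathbb{L}^{2}))$ before monotonicity can be closed, which requires strong $\mathbb{L}^{2}$-convergence in time-average of $\Vkh^{-}$; here the control of $d_{t}\Zkh$ provided by the momentum equation (tested against $\Pi_{h}$-projections of $\mathbb V$-valued test fields) and the bound $\|\Vkh^{+}-\Vkh^{-}\|_{L^{2}(0,T;\mathbb{L}^{2})}\to 0$ following from the a priori estimates on the increments play the decisive role, and the assumption $k/\varepsilon\to 0$ is precisely what makes the $\varepsilon^{-1}$-weighted pressure increments compatible with this strong compactness. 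Once the limit $v$ is shown to satisfy Definition~\ref{definition NSE sol}, uniqueness of the 2D strong NSE solution promotes the subsequence convergence to full convergence of the whole family, completing the proof.
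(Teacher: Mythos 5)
Your overall skeleton (defer existence, measurability, uniqueness and a priori bounds to the three lemmata; build piecewise-constant interpolants; extract weak and weak-$\ast$ limits of $\Vkh^{\pm}$, $\sqrt{\varepsilon}\,\Pkh^{+}$ and $g(\Vkh^{-})$; identify the limit through a discrete local monotonicity/Minty argument weighted by an exponential of $\int_0^t\|\sigma\|^4_{\mathbb{L}^4}ds$; conclude whole-sequence convergence from uniqueness of the 2D strong solution) is the same as the paper's. But your plan contains a genuine gap at exactly the point you flag as the ``hardest obstacle''. You assert that, before the monotonicity argument can be closed, one must prove that $\int_0^{\cdot}g(\Vkh^{-})\,dW \to \int_0^{\cdot}g(v)\,dW$ strongly in $L^{2}(\Omega;L^{2}(0,T;\mathbb{L}^{2}))$, hence strong time-averaged $\mathbb{L}^{2}$-convergence of $\Vkh^{-}$, and you propose to obtain this from the bound on the discrete time derivative plus $\|\Vkh^{+}-\Vkh^{-}\|_{L^{2}(0,T;\mathbb{L}^{2})}\to 0$. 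This is both unnecessary and unobtainable by those means. It is unnecessary because the whole design of the local monotonicity route (and the reason it avoids Skorokhod) is that no strong convergence of $\Vkh^{\pm}$ or of the stochastic integral is ever used: in the weighted energy identity the expectation of the martingale term vanishes, the It\^o correction contributes $k\,\mathbb{E}\|g(\Veps^{m-1})\|^{2}_{\mathscr{L}_{2}(\sqrt{Q}(K),\mathbb{L}^{2})}$, the only ``strong'' ingredient is $\|g(\Vkh^{+})-g(\Vkh^{-})\|\lesssim \sqrt{k}\to 0$ from the increment estimate, and the identification $G_{0}=g(v)$ is an \emph{output} of the limiting variational inequality (obtained by weak lower semicontinuity and then choosing $\sigma=v$), not an input to it. It is unobtainable because $\|\Vkh^{+}-\Vkh^{-}\|_{L^{2}(0,T;\mathbb{L}^{2})}\lesssim\sqrt{k}$ only says the two interpolants share the same limit, and a discrete time-derivative bound plus Aubin--Lions yields at best pathwise (in $\omega$) compactness; upgrading that to strong convergence in $L^{2}(\Omega;\cdot)$ on the \emph{original} probability space is precisely the step that classically requires tightness plus Skorokhod (or a change of stochastic basis), i.e.\ the machinery this theorem is meant to bypass. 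As written, your argument therefore stalls at the step you yourself single out.

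Two smaller inaccuracies: the divergence-free property of the limit $v$ does not need $k/\varepsilon\to 0$ --- it follows from $\varepsilon\to 0$ and the uniform bound on $\sqrt{\varepsilon}\,\Pkh^{+}$ alone. Where $k/\varepsilon\to 0$ is genuinely indispensable is in the monotonicity step itself: since $\Pi_{h}\varphi$ is not discretely divergence-free, the term $\mathbb{E}\bigl[k\sum_{m}\eta^{m-1}\bigl(\Peps^{m},\,\mathrm{div}(\Veps^{m}-\sigma_{h}^{m})\bigr)\bigr]$ survives and is controlled by $\sqrt{k/\varepsilon}$ via the bound $\mathbb{E}\bigl[\varepsilon\max_{m}\|\Peps^{m}\|^{2}_{L^{2}}\bigr]\leq C_{1}$; your proposal does not account for this term. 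Also, Minty's trick identifies the weak limit of the nonlinear operator; it does not ``upgrade weak to strong convergence'' of $\Vkh^{+}$, and no such upgrade is needed.
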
	
	To obtain a solution to equations~\eqref{eq Navier-Stokes} from Algorithm~\ref{Algo}, the condition $\frac{k}{\varepsilon} \to 0$ in Theorem~\ref{theorem 1} can not be avoided on account of the finite element space $\mathbb{H}_{h}$ which does not accept divergence-free test functions (the reader may refer to section~\ref{subsection convergence} for more details). However, the aforementioned condition could be eliminated if the convergence of Algorithm~\ref{Algo} is carried out to achieve a solution to equations~\ref{eq Navier-Stokes modified}. In other words, $\varepsilon$ should be non-vanishing within this step (which is not the target of this paper). Afterwards, one can take advantage of \cite[Proposition 4.1]{Menaldi2008Sritharan} to gain a solution to equations~\eqref{eq Navier-Stokes}.
	\section{Discussion}\label{section discussion}
	\subsection{Existence and uniqueness of solutions}\label{section exist. and uniqu.}
	This section is devoted to giving existence and uniqueness properties to the discrete solution $\{(\Veps^{m}, \Peps^{m})\}_{m=1}^{M}$. The solvability of Algorithm~\ref{Algo} and the measurability of its iterates are handled first in the following lemma.
	\begin{lem}\label{lemma existence+measurability}
		Let $T > 0$ be fixed. Under assumptions~\ref{S1}-\ref{S3}, Algorithm~\ref{Algo} has at least one discrete solution. Moreover, for all $m \in \{1, \dotsc, M\}$, the processes $\Veps^{m} \colon \Omega \to \mathbb{H}_{h}$ and $\Peps^{m} \colon \Omega \to L_{h}$ are $\mathcal{F}_{t_{m}}$-measurable.
	\end{lem}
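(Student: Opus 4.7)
The plan is to proceed by induction on $m \in \{0, 1, \dotsc, M\}$. The base case $m=0$ is immediate since $(\Veps^{0}, \Peps^{0}) = (v_{h}^{0}, p_{h}^{0})$ is $\mathcal{F}_{0}$-measurable by assumption~\ref{S2}. For the inductive step I assume that $(\Veps^{m-1}, \Peps^{m-1})$ is $\mathcal{F}_{t_{m-1}}$-measurable and construct $(\Veps^{m}, \Peps^{m})$.

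First I would eliminate the pressure. Testing the second equation of Algorithm~\ref{Algo} against an arbitrary $q_{h} \in L_{h}$ and using $\Peps^{m} - \Peps^{m-1} \in L_{h}$ yields the closed-form relation $\Peps^{m} = \Peps^{m-1} - \frac{k}{\varepsilon}\rho_{h}(div\,\Veps^{m})$. Substituting it into the momentum equation collapses the algorithm to a single nonlinear equation in the finite-dimensional space $\mathbb{H}_{h}$ of the form $F_{\omega}(\Veps^{m}) = 0$, where the continuous map $F_{\omega}\colon \mathbb{H}_{h} \to \mathbb{H}_{h}$ is defined by
\[
(F_{\omega}(v), \varphi_{h}) = (v, \varphi_{h}) + k\nu(\nabla v, \nabla \varphi_{h}) + k\hat{b}(v, v, \varphi_{h}) + \tfrac{k^{2}}{\varepsilon}(\rho_{h}(div\, v), div\, \varphi_{h}) - R_{\omega}(\varphi_{h}),
\]
with $R_{\omega}(\varphi_{h}) \coloneqq (\Veps^{m-1}, \varphi_{h}) + k(\Peps^{m-1}, div\,\varphi_{h}) + k\langle f^{m}, \varphi_{h}\rangle + (g(\Veps^{m-1})\Delta_{m}W, \varphi_{h})$ gathering the $\mathcal{F}_{t_{m}}$-measurable data.

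For pathwise existence I would invoke the standard topological consequence of Brouwer's fixed-point theorem in the finite-dimensional space $\mathbb{H}_{h}$. Pairing $F_{\omega}$ with $v$ annihilates the cubic term by Proposition~\ref{prop trilinear form}(ii); bounding each contribution of $R_{\omega}(v)$ by Young's inequality together with the elementary estimate $\|div\,v\|_{L^{2}} \le \sqrt{2}\,\|\nabla v\|_{\mathbb{L}^{2}}$ then yields a coercivity inequality of the shape $(F_{\omega}(v), v) \geq \frac{1}{2}\|v\|_{\mathbb{L}^{2}}^{2} + \frac{k\nu}{2}\|\nabla v\|_{\mathbb{L}^{2}}^{2} - C_{\omega}$, with $C_{\omega}$ finite $\mathbb{P}$-a.s. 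Hence $(F_{\omega}(v), v) > 0$ on the boundary of a ball of sufficiently large $\omega$-dependent radius, forcing a zero of $F_{\omega}$ inside it; this provides $\Veps^{m}(\omega)$ and, through the pressure relation, $\Peps^{m}(\omega)$.

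The main obstacle is the measurability statement: since uniqueness is only settled in Lemma~\ref{lemma iterates uniqueness} and Brouwer's theorem is non-constructive, one cannot extract the solution from continuous dependence on the data, and a measurable-selection argument becomes unavoidable. I would verify that the joint map $(\omega, v) \mapsto F_{\omega}(v)$ is Carath\'eodory---continuous in $v$ by construction and measurable in $\omega$ since each of $\Veps^{m-1}, \Peps^{m-1}, f^{m}, g(\Veps^{m-1})\Delta_{m}W$ is $\mathcal{F}_{t_{m}}$-measurable by the induction hypothesis combined with the adaptedness of $W$. The zero-set multifunction $\omega \mapsto F_{\omega}^{-1}(\{0\})$ therefore has nonempty closed values in the Polish space $\mathbb{H}_{h}$ and is $\mathcal{F}_{t_{m}}$-measurable, so the Kuratowski--Ryll-Nardzewski selection theorem supplies an $\mathcal{F}_{t_{m}}$-measurable selection $\Veps^{m}$. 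The explicit pressure formula then transfers measurability to $\Peps^{m}$, closing the induction.
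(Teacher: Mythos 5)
Your proof is correct and follows the same core strategy as the paper: induction over $m$, a coercivity estimate feeding a Brouwer-type zero-point argument to produce a pathwise solution in a finite-dimensional space, and a measurable-selection argument (unavoidable, as you note, since uniqueness is not available at this stage) to obtain $\mathcal{F}_{t_{m}}$-measurability. The one structural difference is that you first eliminate the pressure through the exact update $\Peps^{m} = \Peps^{m-1} - \frac{k}{\varepsilon}\rho_{h}(div\,\Veps^{m})$ and run the Brouwer argument on the velocity space $\mathbb{H}_{h}$ alone, whereas the paper keeps the coupled unknown and applies the Brouwer corollary to an operator $B_{\varepsilon}$ acting on pairs in $\mathbb{H}_{h}\times L_{h}$, using the $\varepsilon$-weighted pressure terms for coercivity in the pressure variable. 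Your reduction is legitimate because $\Peps^{m}-\Peps^{m-1}\in L_{h}$, so the decoupled equation plus the update formula is equivalent to the original scheme; it buys a cleaner coercivity bound (the grad-div contribution $\frac{k^{2}}{\varepsilon}\left(\rho_{h}(div\,v), div\,v\right) = \frac{k^{2}}{\varepsilon}\|\rho_{h}(div\,v)\|_{L^{2}}^{2} \geq 0$ — a point worth stating explicitly) and confines the selection problem to $\Veps^{m}$, with $\Peps^{m}$ inheriting measurability from the explicit linear formula. For the measurability itself you invoke Kuratowski--Ryll-Nardzewski on the zero-set multifunction of the Carath\'eodory map $(\omega,v)\mapsto F_{\omega}(v)$, while the paper cites a universally measurable selector in the spirit of de Bouard--Debussche; these are interchangeable, though with your route you should justify briefly why $\omega\mapsto F_{\omega}^{-1}(\{0\})$ is a measurable closed-valued multifunction (standard, via a countable dense subset of $\mathbb{H}_{h}$ and continuity of $F_{\omega}$), after which the selection theorem applies as you describe.
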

	\begin{proof}
		The solvability of Algorithm~\ref{Algo} can be proven by induction. Indeed, assume that iterates $\Veps^{\ell}$ and $\Peps^{\ell}$ exist for all $\ell \in \{1, \dotsc, m-1\}$. The existence of $(\Veps^{m}, \Peps^{m})$ is therefore the target. To this end, let $E \coloneqq \mathbb{H}^{1}_{0}\times L_{0}^{2}$, and $B_{\varepsilon} \colon E \to E$ be define by
		\begin{equation*}
			\begin{aligned}
				\left(B_{\varepsilon}(u, p), (v, q)\right)_{\mathbb{L}^{2}\times L^{2}} &= \left(u - \Veps^{m-1}(\omega), v\right) + k\nu\left(\nabla u, \nabla v\right) + k\hat{b}(u, u, v) - k\left(p, div \ v\right) - k\langle f^{m}, v\rangle \\&- \left(g(\Veps^{m-1}(\omega))\Delta_{m}W(\omega), v\right) + \varepsilon\left(p - \Peps^{m-1}(\omega), q\right) + k \left(div \ u, q\right),
			\end{aligned}
		\end{equation*}
		for all $(u,p), (v, q) \in E$, and for almost all $\omega \in \Omega$. The symbol $\left(\cdot, \cdot\right)_{\mathbb{L}^{2}\times L^{2}}$ denotes the $\mathbb{L}^{2}\times L^{2}(D)$-inner product. Thanks to Proposition~\ref{prop trilinear form}-\ref{trilinear i}, the continuity of $B_{\varepsilon}$ can be tackled easily. Through the application of Proposition~\ref{prop trilinear form}-\ref{trilinear ii}, the Poincar{\'e} and Young inequalities, estimate~\eqref{eq f estimate}, and assumption~\ref{S3}, one obtains
		\begin{equation*}
			\begin{aligned}
				&\left(B_{\varepsilon}(u, p), (u, p)\right)_{\mathbb{L}^{2}\times L^{2}} \geq \frac{1}{2}||u||_{\mathbb{L}^{2}}^{2} - \frac{1}{2}||\Veps^{m-1}||_{\mathbb{L}^{2}}^{2} + k\nu||\nabla u||_{\mathbb{L}^{2}}^{2} - k||f^{m}||_{\mathbb{H}^{-1}}||u||_{\mathbb{H}^{1}} \\&- ||g(\Veps^{m-1})||_{\mathscr{L}_{2}(K, \mathbb{L}^{2})}||\Delta_{m}W||_{K}||u||_{\mathbb{L}^{2}} + \frac{\varepsilon}{2}||p||_{L^{2}}^{2} - \frac{\varepsilon}{2}||\Peps^{m-1}||^{2}_{L^{2}} \geq \frac{1}{4}||u||^{2}_{\mathbb{L}^{2}} + \frac{\varepsilon}{2}||p||^{2}_{L^{2}} \\&- \frac{1}{2}||\Veps^{m-1}||^{2}_{\mathbb{L}^{2}} - \frac{\varepsilon}{2}||\Peps^{m-1}||^{2}_{L^{2}} - \frac{C_{D}^{2}}{4\nu}||f||^{2}_{L^{2}(0,T;\mathbb{H}^{-1})} - (K_{1} + K_{2}||\Veps^{m-1}||_{\mathbb{L}^{2}})^{2}||\Delta_{m}W||_{K}^{2} \geq 0,
			\end{aligned}
		\end{equation*}
		for all $(u, p) \in E_{h}(\omega) \coloneqq \left\{(v, q) \in \mathbb{H}_{h} \times L_{h} \ | \ ||v||_{\mathbb{L}^{2}} \geq \sqrt{S(\omega)}, \ ||q||_{L^{2}} \geq ||\Peps^{m-1}(\omega)||_{L^{2}}\right\}$, where $S(\omega) \coloneqq 2||\Veps^{m-1}(\omega)||^{2}_{\mathbb{L}^{2}} + \frac{C_{D}^{2}}{\nu}||f||^{2}_{L^{2}(0,T; \mathbb{H}^{-1})} + 4(K_{1} + K_{2}||\Veps^{m-1}(\omega)||_{\mathbb{L}^{2}})^{2}||\Delta_{m}W||_{K}^{2}$. Both $S(\omega)$ and $||\Peps^{m-1}(\omega)||_{L^{2}}$ are $\mathbb{P}$-a.s. finite, thanks to the induction supposition. With that said, the Brouwer fixed point theorem~\cite[Corollary IV.1.1]{girault2012finite} implies the existence of at least one $(u_{\omega}, p_{\omega}) \in \mathbb{H}_{h}\times L_{h}$ such that $B_{\varepsilon}(u_{\omega}, p_{\omega}) = (0, 0)$, $||u_{\omega}||_{\mathbb{L}^{2}} \leq \sqrt{S(\omega)}$ and $||p_{\omega}||_{L^{2}} \leq ||\Peps^{m-1}||_{L^{2}}$. Therewith, it suffices to set $(\Veps^{m}, \Peps^{m}) = (u_{\omega}, p_{\omega})$. On the other hand, the measurability of $\{(\Veps^{m}, \Peps^{m})\}_{m=1}^{M}$ can be also demonstrated by induction. The idea consists in expressing the newly obtained iterates $(u_{\omega}, p_{\omega})$ in terms of the existing ones. This can be done through a universally Borel-measurable selector function $\sigma \colon \mathbb{H}_{h}\times L_{h} \times K \to \mathbb{H}_{h}\times L_{h}$. For instance, $(u_{\omega}, p_{\omega}) = \sigma(\Veps^{m-1}, \Peps^{m-1}, \Delta_{m}W)$, and the $\mathcal{F}_{t_{m}}$-measurability arises from the Brownian increment $\Delta_{m}W$. The reader may refer to \cite[Page 744]{de2004semi} for a detailed approach.
	\end{proof}
	
	Lemma~\ref{lemma existence+measurability} dealt with the existence of a discrete solution which might not be unique. In point of fact, uniqueness in the whole probability set $\Omega$ does not seem to hold due to the nonlinearity interaction. Also, a contraction argument does not perform well in the discrete settings because the discrete time-derivative of an exponential function leads to a supplementary term which blocks the demonstration (see Proposition~\ref{prop discrete derivation}-(iii)). However, it can be proven that iterates' uniqueness holds true in a sample subset of $\Omega$ as demonstrated in the following lemma.
	\begin{lem}\label{lemma iterates uniqueness}
		Assume~\ref{S1}-\ref{S3} and let $\delta > 0$ be a small constant. Solutions $\{(\Veps^{m}, \Peps^{m})\}_{m=1}^{M}$ to Algorithm~\ref{Algo} are $\mathbb{P}$-almost surely unique within either of the following probability subsets:
		\begin{enumerate}[label=(\roman*)]
			\item $\displaystyle\Omega^{1}_{\delta} \coloneqq \left\{\omega \in \Omega \ | \ k\sum_{m=1}^{M}\left|\left|\Veps^{m}\right|\right|_{\mathbb{L}^{4}}^{4} \leq \frac{1}{\delta}\right\}$ provided that $\displaystyle\frac{1}{\nu^{3}\delta} \leq 4c_{0}^{3}$,
			\item $\displaystyle \Omega^{2}_{\delta} \coloneqq \left\{\omega \in \Omega \ | \ \max\limits_{1 \leq m \leq M}\left|\left|\Veps^{m}\right|\right|^{4}_{\mathbb{L}^{2}} \leq \frac{1}{\delta}\right\}$ provided that $\displaystyle\frac{k}{\nu^{3}\delta h^{2}} \leq \frac{2c_{0}^{3}}{\mathscr{C}^{2}}$,
		\end{enumerate}
		for some universal constant $c_{0} \in (0, 3^{-1}2^{\frac{2}{3}}]$. Furthermore, $\displaystyle\mathbb{P}(\Omega_{\delta}^{1}) \geq 1- \delta\mathbb{E}\left[k\sum_{m=1}^{M}\left|\left|\Veps^{m}\right|\right|^{4}_{\mathbb{L}^{4}}\right]$ and $\displaystyle\mathbb{P}(\Omega^{2}_{\delta}) \geq 1 - \delta\mathbb{E}\left[\max\limits_{1 \leq m \leq M}\left|\left|\Veps^{m}\right|\right|^{4}_{\mathbb{L}^{2}}\right]$.
	\end{lem}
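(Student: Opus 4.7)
The plan is to argue pathwise and inductively in $m$, for $\omega$ in the relevant sample subset. Fix a solution $\{(\Veps^m, \Peps^m)\}_{m=1}^M$ supplied by Lemma~\ref{lemma existence+measurability}, let $\{(\tilde V_\varepsilon^m, \tilde\Pi_\varepsilon^m)\}_{m=1}^M$ be any other Algorithm~\ref{Algo} solution starting from the same initial datum $(v_h^0, p_h^0)$, and assume inductively that the two trajectories coincide up to step $m-1$. Then on subtraction at step $m$ the force $f^m$ and the noise increment $g(\Veps^{m-1})\Delta_mW$ drop out, so the difference $(e_V, e_P) := (\Veps^m - \tilde V_\varepsilon^m,\, \Peps^m - \tilde\Pi_\varepsilon^m) \in \mathbb{H}_h \times L_h$ satisfies a deterministic linear system. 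Testing its momentum equation with $\varphi_h = e_V$ and its continuity equation (after multiplying by $k$) with $q_h = e_P$, the pressure-divergence coupling $\pm k(e_P, \text{div}\, e_V)$ cancels upon adding the two, leaving
\[
\|e_V\|_{\mathbb{L}^2}^2 + k\nu\|\nabla e_V\|_{\mathbb{L}^2}^2 + \varepsilon\|e_P\|_{L^2}^2 + k\bigl[\hat b(\Veps^m, \Veps^m, e_V) - \hat b(\tilde V_\varepsilon^m, \tilde V_\varepsilon^m, e_V)\bigr] = 0.
\]

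For the trilinear difference, bilinearity splits it as $\hat b(e_V, \Veps^m, e_V) + \hat b(\tilde V_\varepsilon^m, e_V, e_V)$. Proposition~\ref{prop trilinear form}-\ref{trilinear ii} kills the second summand, and the antisymmetry $\hat b(u, v, w) = -\hat b(u, w, v)$ on $\mathbb{H}_0^1$ — which holds thanks precisely to the corrective $\tfrac12[\text{div}\, u]v$ term in $\hat b$ — recasts the first as $-\hat b(e_V, e_V, \Veps^m)$. H\"older's inequality applied to each piece of $\hat b$ together with the 2D Ladyzhenskaya estimate yields
\[
|\hat b(e_V, e_V, \Veps^m)| \lesssim \|e_V\|_{\mathbb{L}^2}^{1/2}\|\nabla e_V\|_{\mathbb{L}^2}^{3/2}\|\Veps^m\|_{\mathbb{L}^4},
\]
and Young's inequality with conjugate exponents $(4/3, 4)$ absorbs the $\|\nabla e_V\|^{3/2}$ factor into $\tfrac{k\nu}{2}\|\nabla e_V\|_{\mathbb{L}^2}^2$, leaving a residue of the form $\tfrac{k}{4c_0^3 \nu^3}\|e_V\|_{\mathbb{L}^2}^2\|\Veps^m\|_{\mathbb{L}^4}^4$ for a universal constant $c_0 \in (0, 3^{-1}2^{2/3}]$ bundling the Ladyzhenskaya and trilinear constants. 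Substituting back produces
\[
\left(1 - \frac{k\|\Veps^m\|_{\mathbb{L}^4}^4}{4c_0^3\nu^3}\right)\|e_V\|_{\mathbb{L}^2}^2 + \frac{k\nu}{2}\|\nabla e_V\|_{\mathbb{L}^2}^2 + \varepsilon\|e_P\|_{L^2}^2 \leq 0.
\]

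Case (i) closes at once: on $\Omega_\delta^1$ one has $k\|\Veps^m\|_{\mathbb{L}^4}^4 \leq k\sum_{\ell=1}^M \|\Veps^\ell\|_{\mathbb{L}^4}^4 \leq 1/\delta$, so the hypothesis $\tfrac{1}{\nu^3\delta} \leq 4c_0^3$ makes the bracket non-negative and forces $e_V = 0$; the continuity equation then gives $e_P = 0$. For case (ii), apply the 2D inverse estimate $\|v_h\|_{\mathbb{L}^4}^4 \leq \mathscr{C}^2 h^{-2}\|v_h\|_{\mathbb{L}^2}^4$ (a Lebesgue-norm form of \eqref{eq inverse inequality}) to $\Veps^m \in \mathbb{H}_h$, so that the bracket factor becomes $\tfrac{k\mathscr{C}^2}{4c_0^3\nu^3 h^2}\|\Veps^m\|_{\mathbb{L}^2}^4$; the bound $\max_m\|\Veps^m\|_{\mathbb{L}^2}^4 \leq 1/\delta$ on $\Omega_\delta^2$ together with $\tfrac{k}{\nu^3\delta h^2} \leq \tfrac{2c_0^3}{\mathscr{C}^2}$ closes the argument in the same way. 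The two probability lower bounds follow directly from Markov's inequality applied to the defining random variables of $\Omega_\delta^1$ and $\Omega_\delta^2$.

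The main obstacle is bookkeeping rather than insight: the value of the universal constant $c_0$ — which must fit in the range $(0, 3^{-1}2^{2/3}]$ — is determined by the Ladyzhenskaya constant propagated through the Young step together with the $C_D$ of Proposition~\ref{prop trilinear form}-\ref{trilinear iii}, and in case (ii) one must also verify that the $\mathscr{C}$ of \eqref{eq inverse inequality} transfers cleanly to the $\mathbb{L}^4$-to-$\mathbb{L}^2$ inverse estimate. A more conceptual point worth emphasizing is that the cancellation of the pressure-divergence coupling here is provided by the second line of Algorithm~\ref{Algo} (i.e., by the pseudo-compressibility), not by any divergence-free property of $\mathbb{H}_h$.
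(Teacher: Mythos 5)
Your proof is correct and follows essentially the same route as the paper: induction in $m$ so the noise increment cancels, testing with the difference of iterates, bounding the trilinear difference by $\|e_V\|_{\mathbb{L}^2}^{1/2}\|\nabla e_V\|_{\mathbb{L}^2}^{3/2}\|\Veps^m\|_{\mathbb{L}^4}$, absorbing the gradient via Young, restricting to $\Omega_\delta^1$ or (via the inverse estimate) $\Omega_\delta^2$, and concluding with Markov's inequality. The only cosmetic differences are that you re-derive the trilinear-difference estimate (via antisymmetry of $\hat b$, H\"older and Ladyzhenskaya) where the paper cites \cite{Menaldi2008Sritharan}, and that your case (ii) uses a direct $\mathbb{L}^4$--$\mathbb{L}^2$ inverse estimate while the paper goes through Ladyzhenskaya plus the $\mathbb{H}^1$--$\mathbb{L}^2$ inverse inequality, which only changes the harmless constant bundled into $c_0$.
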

	\begin{proof}
		Assume that $\{(\Veps^{m}, \Peps^{m})\}_{m=1}^{M}$ and $\{(U_{\varepsilon}^{m}, P_{\varepsilon}^{m})\}_{m=1}^{M}$ are solutions to Algorithm~\ref{Algo} starting from the same initial condition $(v_{h}^{0}, p_{h}^{0})$. For all $m \in \{0, 1, \dotsc, M\}$, let $Z_{\varepsilon}^{m} \coloneqq \Veps^{m} - U_{\varepsilon}^{m}$ and $Q_{\varepsilon}^{m} \coloneqq \Peps^{m} - P_{\varepsilon}^{m}$. Then, iterates $\{(Z_{\varepsilon}^{m}, Q_{\varepsilon}^{m})\}_{m=1}^{M}$ satisfy for all $m \in \{1, \dotsc, M\}$ and $\mathbb{P}$-a.s. the following equations
		\begin{equation}\label{calc1}
			\begin{cases}
				\begin{aligned}
					&\left(Z_{\varepsilon}^{m} - Z_{\varepsilon}^{m-1}, \varphi_{h}\right) + k\nu\left(\nabla Z_{\varepsilon}^{m}, \nabla \varphi_{h}\right) + k\langle \hat{B}(\Veps^{m}, \Veps^{m}) - \hat{B}(U_{\varepsilon}^{m}, U_{\varepsilon}^{m}), Z_{\varepsilon}^{m}\rangle - k\left(Q_{\varepsilon}^{m}, div\varphi_{h}\right) \\&\hspace{1cm}= \left([g(\Veps^{m-1}) - g(U_{\varepsilon}^{m-1})]\Delta_{m}W, \varphi_{h}\right), \ \ \forall \varphi_{h} \in \mathbb{H}_{h},
				\end{aligned}\\
				\frac{\varepsilon}{k}\left(Q_{\varepsilon}^{m} - Q_{\varepsilon}^{m-1}, q_{h}\right) + \left(div Z_{\varepsilon}^{m}, q_{h}\right) = 0, \ \ \forall q_{h} \in L_{h}.
			\end{cases}
		\end{equation}
		Observe that the stochastic term in equation~\eqref{calc1} can be eliminated if one had $\Veps^{m-1} = U^{m-1}_{\varepsilon}$. Since $U^{0}_{\varepsilon} = \Veps^{0} = v_{h}^{0}$, an induction argument seems to be legitimate. Indeed, for $m=1$ and $(\varphi_{h}, q_{h}) = (Z_{\varepsilon}^{1}, Q_{\varepsilon}^{1})$, equation~\eqref{calc1} turns into
		\begin{equation}\label{calc2}
			\begin{aligned}
				&||Z_{\varepsilon}^{1}||_{\mathbb{L}^{2}}^{2} + \varepsilon||Q^{1}_{\varepsilon}||_{L^{2}}^{2} + k\nu||\nabla Z_{\varepsilon}^{1}||_{\mathbb{L}^{2}}^{2} = k\langle\hat{B}(U_{\varepsilon}^{1}, U_{\varepsilon}^{1}) - \hat{B}(\Veps^{1}, \Veps^{1}), Z_{\varepsilon}^{1}\rangle \\&\leq 2k||\nabla Z_{\varepsilon}^{1}||^{\frac{3}{2}}_{\mathbb{L}^{2}}||Z_{\varepsilon}^{1}||_{\mathbb{L}^{2}}^{\frac{1}{2}}||\Veps^{1}||_{\mathbb{L}^{4}} \leq 3.2^{-\frac{2}{3}}c_{0}k\nu||\nabla Z_{\varepsilon}^{1}||^{2}_{\mathbb{L}^{2}} + \frac{k}{4\nu^{3}c_{0}^{3}}||Z_{\varepsilon}^{1}||_{\mathbb{L}^{2}}^{2}||\Veps^{1}||_{\mathbb{L}^{4}}^{4},
			\end{aligned}
		\end{equation}
		where the first inequality employs the estimate $\left|\langle \hat{B}(u, u) - \hat{B}(v, v), z \rangle\right| \leq 2||\nabla(u-v)||_{\mathbb{L}^{2}}^{3/2}||u-v||^{1/2}_{\mathbb{L}^{2}}||z||_{\mathbb{L}^{4}}$ for all $u, v, z \in \mathbb{H}_{0}^{1}$ (see the proof in \cite[Lemma 2.3]{Menaldi2008Sritharan}), and the second inequality uses Young's inequality for some constant $c_{0} \in (0, 3^{-1}2^{\frac{2}{3}}]$. Subsequently, equation~\eqref{calc2} becomes
		\begin{equation}\label{calc3}
			\left(1-k4^{-1}\nu^{-3}c_{0}^{-3}||\Veps^{1}||^{4}_{\mathbb{L}^{4}}\right)||Z_{\varepsilon}^{1}||_{\mathbb{L}^{2}}^{2} + \varepsilon||Q_{\varepsilon}^{1}||^{2}_{L^{2}} \leq 0.
		\end{equation}
		One way to obtain uniqueness is by multiplying equation~\eqref{calc3} by the indicator function $\mathbb{1}_{\Omega_{\delta}^{1}}$ which grants $(1 - 4^{-1}\nu^{-3}c_{0}^{-3}\delta^{-1})\mathbb{1}_{\Omega_{\delta}^{1}}||Z_{\varepsilon}^{1}||_{\mathbb{L}^{2}}^{2} + \varepsilon\mathbb{1}_{\Omega_{\delta}^{1}}||Q_{\varepsilon}^{1}||_{L^{2}}^{2} \leq 0$. It follows that $Z_{\varepsilon}^{1} = Q_{\varepsilon}^{1} = 0$ a.e. in $D$ and $\mathbb{P}$-a.s. in $\Omega_{\delta}^{1}$ provided that the coefficient of $||Z_{\varepsilon}^{1}||^{2}_{\mathbb{L}^{2}}$ is positive. The second way for uniqueness consists in multiplying equation~\eqref{calc3} by $\mathbb{1}_{\Omega_{\delta}^{2}}$ after employing the inverse estimate~\eqref{eq inverse inequality}. That is, $||\Veps^{1}||^{4}_{\mathbb{L}^{4}} \leq 2||\Veps^{1}||^{2}_{\mathbb{L}^{2}}||\nabla \Veps^{1}||_{\mathbb{L}^{2}}^{2} \leq 2\mathscr{C}^{2}h^{-2}||\Veps^{1}||^{4}_{\mathbb{L}^{2}}$, where the first inequality is due to Ladyzhenskaya (see \cite[Lemma I.1]{Ladyzhenskaya1964}). Therefore, equation~\eqref{calc3} turns into $(1 - 2^{-1}\nu^{-3}c_{0}^{-3}\mathscr{C}^{2}\delta^{-1}h^{-2}k)\mathbb{1}_{\Omega_{\delta}^{2}}||Z_{\varepsilon}^{1}||_{\mathbb{L}^{2}}^{2} + \varepsilon\mathbb{1}_{\Omega_{\delta}^{2}}||Q_{\varepsilon}^{1}||_{L^{2}}^{2} \leq 0$ which implies $Z_{\varepsilon}^{1} = Q_{\varepsilon}^{1} = 0$ a.e. in $D$ and $\mathbb{P}$-a.s. in $\Omega_{\delta}^{2}$ provided the coefficient of $||Z_{\varepsilon}^{1}||^{2}_{\mathbb{L}^{2}}$ is positive. With that being said, it suffices to assume that $Z^{m-1}_{\varepsilon} = Q_{\varepsilon}^{m - 1} = 0$ a.e. in $D$, $\mathbb{P}$-a.s. in either $\Omega_{\delta}^{1}$ or $\Omega_{\delta}^{2}$, and re-apply the same technique to obtain a similar result for the rank $m$. Finally, estimates of $\mathbb{P}(\Omega_{\delta}^{1})$ and $\mathbb{P}(\Omega_{\delta}^{2})$ derive from the Markov inequality.
	\end{proof}
	\begin{rmk}
		Picking between $\Omega_{\delta}^{1}$ and $\Omega_{\delta}^{2}$ in Lemma~\ref{lemma iterates uniqueness} depends on the choice of the viscosity $\nu$. Observe that the condition $\frac{1}{\nu^{3}\delta} \leq 4c_{0}^{3}$ does not allow $\delta$ to be small when $\nu$ is tiny. Therewith, choosing $\nu$ large (resp. small) corresponds to $\Omega_{\delta}^{1}$ (resp. $\Omega_{\delta}^{2}$). Moreover, lower bounds associated with $\mathbb{P}(\Omega_{\delta}^{1})$ and $\mathbb{P}(\Omega_{\delta}^{2})$ in Lemma~\ref{lemma iterates uniqueness} are finite as illustrated in Lemma~\ref{lemma a priori estimates}. It is worth mentioning that $\mathbb{E}\left[k\sum_{m=1}^{M}\left|\left|\Veps^{m}\right|\right|^{4}_{\mathbb{L}^{4}}\right] \lesssim \mathbb{E}\left[\max\limits_{1 \leq m \leq M}\left|\left|\Veps^{m}\right|\right|_{\mathbb{L}^{2}}^{4}\right]^{\frac{1}{2}}\mathbb{E}\left[\left(k\sum_{m=1}^{M}\left|\left|\nabla \Veps^{m}\right|\right|_{\mathbb{L}^{2}}^{2}\right)^{2}\right]^{\frac{1}{2}}$.
	\end{rmk}
	\subsection{A priori bounds and convergence}\label{section estimates and convergence}
	The first part of this section is dedicated to achieving stability of Algorithm~\ref{Algo}, whose convergence toward the unique solution of equations~\eqref{eq Navier-Stokes} is handled in the second part.
	\subsubsection{A priori bounds}
	\begin{lem}\label{lemma a priori estimates}
		Let $p \in [2, +\infty) \cap \mathbb{N}$ be fixed and assumptions \ref{S1}-\ref{S3} be satisfied. Then, iterates $\{(\Veps^{m}, \Peps^{m})\}_{m=1}^{M}$ of Algorithm~\ref{Algo} fulfill the following estimates:
		\begin{enumerate}[label=(\roman*)]
			\item $\displaystyle\mathbb{E}\bigg[\max\limits_{1 \leq m \leq M}\left|\left|\Veps^{m}\right|\right|^{2}_{\mathbb{L}^{2}} + k\nu\sum_{m=1}^{M}\left|\left|\nabla \Veps^{m}\right|\right|^{2}_{\mathbb{L}^{2}} + \sum_{m=1}^{M}\left|\left|\Veps^{m} - \Veps^{m-1}\right|\right|^{2}_{\mathbb{L}^{2}}\bigg] \leq C_{1}$,
			\item $\displaystyle \mathbb{E}\bigg[\max\limits_{1 \leq m \leq M}\left|\left|\Peps^{m}\right|\right|^{2}_{L^{2}} + \sum_{m=1}^{M}\left|\left|\Peps^{m} - \Peps^{m-1}\right|\right|^{2}_{L^{2}}\bigg] \leq \frac{C_{1}}{\varepsilon},$
			\item $\displaystyle \mathbb{E}\bigg[\max\limits_{1 \leq m \leq M}\left|\left|\Veps^{m}\right|\right|_{\mathbb{L}^{2}}^{2^{p}} + \Big(k\nu\sum_{m=1}^{M}\left|\left|\nabla\Veps^{m}\right|\right|^{2}_{\mathbb{L}^{2}}\Big)^{2^{p-1}} + \Big(\sum_{m=1}^{M}\left|\left|\Veps^{m} - \Veps^{m-1}\right|\right|^{2}_{\mathbb{L}^{2}}\Big)^{2^{p-1}}\bigg] \leq C_{p}$,
			\item $\displaystyle \mathbb{E}\bigg[\max\limits_{1 \leq m \leq M}\left|\left|\Peps^{m}\right|\right|_{L^{2}}^{2^{p}} + \Big(\sum_{m=1}^{M}\left|\left|\Peps^{m} - \Peps^{m-1}\right|\right|_{L^{2}}^{2}\Big)^{2^{p-1}}\bigg] \leq \varepsilon^{-2^{p-1}}C_{p}$,
		\end{enumerate}
		for some constant $C_{p} \geq 0$ depending only on $||v_{0}||_{L^{2^{p}}(\Omega; \mathbb{L}^{2})}, ||p_{0}||_{L^{2^{p}}(\Omega; L^{2})}, D, \nu, ||f||_{L^{2^{p}}(\Omega; L^{2}(0,T; \mathbb{H}^{-1}))}$, $T, Tr(Q), K_{1}, p$ and $K_{2}$, with $C_{1} = C_{p=1}$.
	\end{lem}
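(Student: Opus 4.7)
The plan is to derive a discrete energy identity by testing the momentum equation with $\varphi_h = \Veps^m$ and the continuity equation with $q_h = \Peps^m$ (after multiplying the second equation by $k$), then summing. The trilinear term kills itself by Proposition~\ref{prop trilinear form}-\ref{trilinear ii}, and the coupling $k(\Peps^m, \mathrm{div}\,\Veps^m)$ produced by the velocity equation is exactly the term that the continuity equation replaces by $-\varepsilon(\Peps^m - \Peps^{m-1}, \Peps^m)$. Using the polarisation identity $2(a-b,a) = \|a\|^{2} - \|b\|^{2} + \|a-b\|^{2}$ for both the velocity and the pressure increments then produces, for every $n \in \{1,\dots,M\}$,
\begin{equation*}
\tfrac12\|\Veps^{n}\|_{\mathbb{L}^2}^{2} + \tfrac{\varepsilon}{2}\|\Peps^{n}\|_{L^{2}}^{2} + \tfrac12\sum_{m=1}^{n}\|\Veps^{m}-\Veps^{m-1}\|_{\mathbb{L}^{2}}^{2} + \tfrac{\varepsilon}{2}\sum_{m=1}^{n}\|\Peps^{m}-\Peps^{m-1}\|_{L^{2}}^{2} + k\nu\sum_{m=1}^{n}\|\nabla\Veps^{m}\|_{\mathbb{L}^2}^{2} = R_{n},
\end{equation*}
where $R_{n}$ contains the initial data, the forcing, and the stochastic contributions.

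\textbf{Treatment of the forcing and noise.} I would control $k\langle f^{m}, \Veps^{m}\rangle$ by Poincar\'e and Young, absorbing a $\tfrac{k\nu}{4}\|\nabla\Veps^{m}\|_{\mathbb{L}^2}^{2}$ contribution into the left-hand side and leaving the data term $k\|f^{m}\|_{\mathbb{H}^{-1}}^{2}/\nu$, then use~\eqref{eq f estimate}. For the noise I would split
\begin{equation*}
\bigl(g(\Veps^{m-1})\Delta_{m}W, \Veps^{m}\bigr) = \bigl(g(\Veps^{m-1})\Delta_{m}W, \Veps^{m-1}\bigr) + \bigl(g(\Veps^{m-1})\Delta_{m}W, \Veps^{m}-\Veps^{m-1}\bigr);
\end{equation*}
the first piece is a martingale increment that will be handled by Burkholder--Davis--Gundy, whereas the second piece is dominated by $\tfrac14\|\Veps^{m}-\Veps^{m-1}\|_{\mathbb{L}^2}^{2} + \|g(\Veps^{m-1})\Delta_{m}W\|_{\mathbb{L}^{2}}^{2}$, the first summand being absorbed and the second controlled in expectation by $k\,\mathrm{Tr}(Q)(K_{1}+K_{2}\|\Veps^{m-1}\|_{\mathbb{L}^{2}})^{2}$ via~\ref{S3}. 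Taking $\sup_{n\leq M}$, then expectations, and applying BDG to the martingale part yields an inequality of the form $\mathbb{E}[X_{M}] \leq C + C\,\mathbb{E}\bigl[k\sum_{m} \|\Veps^{m-1}\|_{\mathbb{L}^2}^{2}\bigr]$, and the discrete Gronwall lemma closes the estimate, proving assertion~(i). Dividing the pressure contribution by $\varepsilon$ immediately delivers assertion~(ii).

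\textbf{Higher moments.} For $p\geq 2$ I would raise the summed energy identity, stopped at the random index realising $\max_{1\leq n\leq M}\|\Veps^{n}\|_{\mathbb{L}^2}^{2}$, to the power $2^{p-1}$, and then distribute the right-hand side using the sum-Jensen-type inequality~\eqref{eq sum-Jensen-type}. The forcing part is straightforward using H\"older and~\eqref{eq f estimate}. The martingale term requires the BDG inequality at exponent $2^{p-1}$, which produces a quadratic variation $\bigl(\sum_{m} k\|g(\Veps^{m-1})\|_{\mathscr{L}_{2}}^{2}\|\Veps^{m-1}\|_{\mathbb{L}^{2}}^{2}\bigr)^{2^{p-2}}$; this is controlled by $\sup_{m}\|\Veps^{m}\|_{\mathbb{L}^{2}}^{2^{p-1}}$ times a lower-order factor, and Young's inequality with the right split-off exponent lets me absorb the $\sup$ into the left-hand side. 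The $\|g(\Veps^{m-1})\Delta_{m}W\|_{\mathbb{L}^{2}}^{2}$ remainder is elevated to $2^{p-1}$, and~\eqref{eq sum-Jensen-type} together with the linear growth of $g$ reduces it to a Gronwall-amenable form. A discrete Gronwall argument (or an induction on $p$ using the bound from the previous step) closes the loop and produces~(iii), and dividing by $\varepsilon^{2^{p-1}}$ gives~(iv).

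\textbf{Main obstacle.} The delicate point is the higher-moment stochastic analysis: after raising to the power $2^{p-1}$ one must avoid producing any factor that blows up as $\varepsilon \to 0$ in the velocity bound, which forces a careful bookkeeping in which every appearance of $\Peps^{m}$ stays multiplied by its companion $\varepsilon$, and it forces the $\varepsilon^{-2^{p-1}}$ factor to appear only at the end when isolating the pure pressure norm. The second delicate point is the BDG application at arbitrary moment $2^{p-1}$: the quadratic variation must be split so that one part matches $\sup_{m}\|\Veps^{m}\|_{\mathbb{L}^{2}}^{2^{p-1}}$ (absorbed by a small Young constant) while the remainder is majorised via~\eqref{eq sum-Jensen-type} by quantities already controlled at moment $2^{p-1}$ by the induction hypothesis on $p$.
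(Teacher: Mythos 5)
Your proposal is correct and follows essentially the same route as the paper: testing with $(\Veps^{m},\Peps^{m})$ so the pressure coupling cancels against $\varepsilon(\Peps^{m}-\Peps^{m-1},\Peps^{m})$, splitting the noise term as $(g(\Veps^{m-1})\Delta_{m}W,\Veps^{m-1})+(g(\Veps^{m-1})\Delta_{m}W,\Veps^{m}-\Veps^{m-1})$ with Young absorption, handling the martingale maximum by Burkholder--Davis--Gundy and closing with the discrete Gr\"onwall lemma, then raising to the power $2^{p-1}$ with inequality~\eqref{eq sum-Jensen-type} and BDG for the higher moments. The only cosmetic difference is that you obtain the $\mathbb{E}[\max]$ bound in one pass instead of the paper's two-stage argument (first $\max_{m}\mathbb{E}$ via It\^o isometry and Gr\"onwall, then $\mathbb{E}[\max]$), which is an equivalent bookkeeping choice.
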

	\begin{proof}
		Replace $(\varphi_{h}, q_{h})$ by $(\Veps^{m}, \Peps^{m})$ in Algorithm~\ref{Algo} and employ the identity $(a-b, a) = \frac{1}{2}(||a||^{2}_{\mathbb{L}^{2}} - ||b||^{2}_{\mathbb{L}^{2}} - ||a - b||^{2}_{\mathbb{L}^{2}})$ together with Proposition~\ref{prop trilinear form}-\ref{trilinear ii}, Cauchy-Schwarz, Poincar{\'e} and Young's inequalities:
		\begin{equation}\label{calc4}
			\begin{aligned}
				&\frac{1}{2}||\Veps^{m}||_{\mathbb{L}^{2}}^{2} - \frac{1}{2}||\Veps^{m-1}||^{2}_{\mathbb{L}^{2}} + \frac{1}{4}||\Veps^{m} - \Veps^{m-1}||_{\mathbb{L}^{2}}^{2} + \frac{\varepsilon}{2}\left(||\Peps^{m}||_{L^{2}}^{2} - ||\Peps^{m-1}||^{2}_{L^{2}} + ||\Peps^{m} - \Peps^{m-1}||^{2}_{L^{2}}\right) \\&+ \frac{k\nu}{2}||\nabla \Veps^{m}||^{2}_{\mathbb{L}^{2}} \leq \frac{C_{D}^{2}k}{2\nu}||f^{m}||^{2}_{\mathbb{H}^{-1}} + ||g(\Veps^{m-1})\Delta_{m}W||_{\mathbb{L}^{2}}^{2} + \left(g(\Veps^{m-1})\Delta_{m}W, \Veps^{m-1}\right).
			\end{aligned}
		\end{equation}
		Summing equations~\eqref{calc4} over $m$ from $1$ to $\ell \in \{1, \dotsc, M\}$, then applying the mathematical expectation, condition $\varepsilon \leq 1$, estimates~\eqref{eq projection stability} and \eqref{eq f estimate} yield
		\begin{equation}\label{calc5}
			\begin{aligned}
				&\mathbb{E}\Big[||\Veps^{\ell}||^{2}_{\mathbb{L}^{2}} + \varepsilon||\Peps^{\ell}||^{2}_{L^{2}} + k\nu\sum_{m=1}^{\ell}||\nabla \Veps^{m}||^{2}_{\mathbb{L}^{2}} + \sum_{m=1}^{\ell}\Big(\frac{1}{2}||\Veps^{m} - \Veps^{m-1}||^{2}_{\mathbb{L}^{2}} + \varepsilon||\Peps^{m} - \Peps^{m-1}||_{L^{2}}^{2}\Big)\Big] \\&\leq \mathbb{E}\Big[||v_{0}||^{2}_{\mathbb{L}^{2}} + ||p_{0}||^{2}_{L^{2}} + \frac{C_{D}^{2}}{\nu}||f||^{2}_{L^{2}(0,T; \mathbb{H}^{-1})} + 2\sum_{m=1}^{\ell}||g(\Veps^{m-1})\Delta_{m}W||^{2}_{\mathbb{L}^{2}}\Big],
			\end{aligned}
		\end{equation}
		where the mathematical expectation of last term in equation~\eqref{calc4} vanishes due to the $\mathcal{F}_{t_{m-1}}$-measurability of $\Veps^{m-1}$ together with assumption~\ref{S3}. On the other hand, the last term of inequality~\eqref{calc5} can be handled through the It{\^o} isometry and assumption~\ref{S3} as follows:
		\begin{equation}\label{calc5'}
			\begin{aligned}
				&\mathbb{E}\left[||g(\Veps^{m-1})\Delta_{m}W||^{2}_{\mathbb{L}^{2}}\right] = \mathbb{E}\left[\Big|\Big|\int_{t_{m-1}}^{t_{m}}g(\Veps^{m-1})dW(t)\Big|\Big|^{2}_{\mathbb{L}^{2}}\right] = k\mathbb{E}\left[\Big|\Big|g(\Veps^{m-1})\Big|\Big|^{2}_{\mathscr{L}_{2}(\sqrt{Q}(K), \mathbb{L}^{2})}\right] \\&\leq 2kTr(Q)K_{1}^{2} + 2kTr(Q)K_{2}^{2}\mathbb{E}\left[||\Veps^{m-1}||^{2}_{\mathbb{L}^{2}}\right].
			\end{aligned}
		\end{equation}
		Thus, the discrete Gr{\"o}nwall inequality implies
		\begin{equation}\label{calc6}
			\begin{aligned}
				&\max\limits_{1 \leq m \leq M}\mathbb{E}\left[||\Veps^{m}||_{\mathbb{L}^{2}}^{2} + \varepsilon||\Peps^{m}||_{L^{2}}^{2}\right] + \sum_{m=1}^{M}\mathbb{E}\left[k\nu||\nabla \Veps^{m}||_{\mathbb{L}^{2}}^{2} + \frac{1}{2}||\Veps^{m} - \Veps^{m-1}||_{\mathbb{L}^{2}}^{2}\right] \\& + \mathbb{E}\left[\sum_{m=1}^{M}\varepsilon||\Peps^{m} - \Peps^{m-1}||_{L^{2}}^{2}\right]\leq C_{1},
			\end{aligned}
		\end{equation}
		where $C_{1}>0$ depends only on $||v_{0}||_{L^{2}(\Omega; \mathbb{L}^{2})}, ||p_{0}||_{L^{2}(\Omega; L^{2})}, D, \nu, ||f||_{L^{2}(\Omega; L^{2}(0,T; \mathbb{H}^{-1}))}, T, Tr(Q), K_{1}$ and $K_{2}$. To terminate the proof of estimates $(i)$ and $(ii)$, it suffices to reconsider equation~\eqref{calc4}, sum it over $m$ from $1$ to $\ell \in \{1, \dotsc, M\}$, take the maximum over $\ell$, then apply the mathematical expectation to get
		\begin{equation}\label{calc7}
			\begin{aligned}
				&\mathbb{E}\Big[\max\limits_{1 \leq \ell \leq M}(||\Veps^{\ell}||^{2}_{\mathbb{L}^{2}} + \varepsilon||\Peps^{\ell}||^{2}_{L^{2}})\Big] \leq \mathbb{E}\Big[||v_{0}||_{\mathbb{L}^{2}}^{2} + ||p_{0}||^{2}_{L^{2}} + C_{D}^{2}\nu^{-1}||f||^{2}_{L^{2}(0,T; \mathbb{H}^{-1})} \\&+ 2\sum_{m=1}^{M}||g(\Veps^{m-1})\Delta_{m}W||^{2}_{\mathbb{L}^{2}} + 2\max\limits_{1 \leq \ell \leq M}\sum_{m=1}^{\ell}\left(g(\Veps^{m-1})\Delta_{m}W, \Veps^{m-1}\right)\Big].
			\end{aligned}
		\end{equation}
		The penultimate term is estimated in inequality~\eqref{calc5'}. The last term is controlled by
		\begin{equation*}
			\begin{aligned}
				&\lesssim 6\mathbb{E}\left[\left(k\sum_{m=1}^{M}||g(\Veps^{m-1})||^{2}_{\mathscr{L}_{2}(K, \mathbb{L}^{2})}||\Veps^{m-1}||^{2}_{\mathbb{L}^{2}}\right)^{\frac{1}{2}}\right] \\&\leq \frac{3}{4}\mathbb{E}\left[\max\limits_{1 \leq \ell \leq M}||\Veps^{\ell}||^{2}_{\mathbb{L}^{2}}\right] + \mathbb{E}\left[\frac{3}{4}||v_{h}^{0}||^{2}_{\mathbb{L}^{2}} + 3k\sum_{m=1}^{M}(K_{1}^{2} + K_{2}^{2}||\Veps^{m-1}||^{2}_{\mathbb{L}^{2}})\right],
			\end{aligned}
		\end{equation*}
		where Young's inequality and assumption~\ref{S3} are used together with the Davis inequality which is applicable since the integrand can be considered as a simple function with respect to time. Obviously, the first term on the right-hand side must be absorbed into the left side of equation~\eqref{calc7} and the remaining terms can be readily controlled through estimates~\eqref{eq projection stability} and \eqref{calc6}. This completes the proof of assertions $(i)$ and $(ii)$. Estimates $(iii)$ and $(iv)$ can be demonstrated as follows: let $p \geq 2$ be an integer. Summing equation~\eqref{calc4} over $m$ from $1$ to $\ell \in \{1, \dotsc, M\}$, making use of estimates~\eqref{eq projection stability},\eqref{eq f estimate}, raising both sides to the power $2^{p-1}$, then employing inequality~\eqref{eq sum-Jensen-type} multiple times yield
		\begin{equation}\label{calc8}
			\begin{aligned}
				&\max\limits_{1 \leq \ell \leq M}(||\Veps^{\ell}||^{2^{p}}_{\mathbb{L}^{2}} + \varepsilon^{2^{p-1}}||\Peps^{\ell}||^{2^{p}}_{L^{2}}) + \Big(\sum_{m=1}^{M}(||\Veps^{m} - \Veps^{m-1}||_{\mathbb{L}^{2}}^{2} + \varepsilon||\Peps^{m} - \Peps^{m-1}||^{2}_{L^{2}})\Big)^{2^{p-1}} \\&+ \Big(k\nu\sum_{m=1}^{M}||\nabla \Veps^{m}||_{\mathbb{L}^{2}}^{2}\Big)^{2^{p-1}} \lesssim ||v_{0}||^{2^{p}}_{\mathbb{L}^{2}} + ||p_{0}||^{2^{p}}_{L^{2}} + C_{D}\nu^{-2^{p-1}}||f||_{L^{2}(0,T; \mathbb{H}^{-1})}^{2^{p}} \\&+ \sum_{m=1}^{M}||g(\Veps^{m-1})\Delta_{m}W||_{\mathbb{L}^{2}}^{2^{p}} + \Big(\max\limits_{1 \leq \ell \leq M}\sum_{m=1}^{\ell}\left(g(\Veps^{m-1})\Delta_{m}W, \Veps^{m-1}\right)\Big)^{2^{p-1}}.
			\end{aligned}
		\end{equation}
		The mathematical expectation of the penultimate term is estimated through the Burkholder-Davis-Gundy inequality as follows:
		\begin{equation}\label{calc9}
			\begin{aligned}
				&\sum_{m=1}^{M}\mathbb{E}\left[\Big|\Big|\int_{t_{m-1}}^{t_{m}}g(\Veps^{m-1})dW(t)\Big|\Big|_{\mathbb{L}^{2}}^{2^{p}}\right] \lesssim \sum_{m=1}^{M}\left(\int_{t_{m-1}}^{t_{m}}\mathbb{E}\left[||g(\Veps^{m-1})||_{\mathscr{L}_{2}(K, \mathbb{L}^{2})}^{2^{p}}\right]^{\frac{1}{2^{p-1}}}dt\right)^{2^{p-1}} \\&= k^{2^{p-1}}\sum_{m=1}^{M}\mathbb{E}\left[||g(\Veps^{m-1})||^{2^{p}}_{\mathscr{L}_{2}(K, \mathbb{L}^{2})}\right] \lesssim K_{1}^{2^{p}}T + k^{2^{p-1}}\sum_{m=1}^{M}K_{2}^{2^{p}}\mathbb{E}\left[||\Veps^{m-1}||^{2^{p}}_{\mathbb{L}^{2}}\right],
			\end{aligned}
		\end{equation}
		thanks to inequality~\eqref{eq sum-Jensen-type} and assumption~\ref{S3}. The last term of equation~\eqref{calc8} can be controlled by
		\begin{equation*}
			\begin{aligned}
				&\lesssim \mathbb{E}\left[\Big(k\sum_{m=1}^{M}||g(\Veps^{m-1})||^{2}_{\mathscr{L}_{2}(K, \mathbb{L}^{2})}||\Veps^{m-1}||_{\mathbb{L}^{2}}^{2}\Big)^{2^{p-2}}\right] \leq \frac{1}{4}\mathbb{E}\Big[\max\limits_{1 \leq \ell \leq M}||\Veps^{\ell}||^{2^{p}}_{\mathbb{L}^{2}}\Big] \\&+ \mathbb{E}\Big[\frac{1}{4}||v_{h}^{0}||_{\mathbb{L}^{2}}^{2^{p}} + 3^{2^{p-1}-1}k^{2^{p-1}}\sum_{m=1}^{M}||g(\Veps^{m-1})||^{2^{p}}_{\mathscr{L}_{2}(K, \mathbb{L}^{2})}\Big]
			\end{aligned}
		\end{equation*}
		where the Burkholder-Davis-Gundy inequality is employed (see \cite[Theorem 2.4]{brzezniak1997stochastic}) together with estimate~\eqref{eq sum-Jensen-type} and Young's inequality. By virtue of assumption~\ref{S3}, the last term can be bounded by almost the same right-hand side of equation~\eqref{calc9}. Putting it all together and applying the discrete Gr{\"o}nwall inequality to equation~\eqref{calc8} complete the proof.
	\end{proof}
	\subsubsection{Convergence}\label{subsection convergence}
	Stability properties that were derived in Lemma~\ref{lemma a priori estimates} will play a crucial role in this part, especially to offer convergence results to $\{(\Veps^{m}, \Peps^{m})\}_{m=1}^{M}$ as $\varepsilon, k,h \to 0$ . For this purpose, a few new notations must be summoned along with one important lemma consisting of a monotonicity property that allows the convergence of Algorithm~\ref{Algo} toward equations~\eqref{eq Navier-Stokes} to occur. For all $m \in \{1, \dotsc, M\}$, the new notations read:
	\begin{align*}
		&\left(\Vkh^{+}(t), \Pkh^{+}(t)\right) \coloneqq \left(\Veps^{m}, \Peps^{m}\right), \ \forall t \in (t_{m-1}, t_{m}], 
		\\& \left(\Vkh^{-}(t), \Pkh^{-}(t)\right) \coloneqq \left(\Veps^{m-1}, \Peps^{m-1}\right), \ \forall t \in [t_{m-1}, t_{m}).
	\end{align*}
	There will also be similar notations in the upcoming part such as $f^{+}$ and $r^{-}$; the reader may refer to section~\ref{subsection discretiztion} for an adequate definition.
	Note that it is not mandatory for $\varepsilon$ to be dependent on the discretization parameters $k$ and $h$. If so, it suffices that $\varepsilon = \varepsilon(k,h) \to 0$ as $k,h \to 0$. It is worth mentioning the assumption $\frac{k}{\varepsilon} \to 0$ as $k, h, \varepsilon \to 0$, which will be imposed later on to achieve the convergence toward the solution of equations~\eqref{eq Navier-Stokes}. Such a hypothesis arises from the chosen finite element space $\mathbb{H}_{h}$, which does not offer divergence-free test functions for the sake of eliminating the term $\langle \nabla\Peps^{m}, \varphi_{h}\rangle$, which appears during the calculations hereafter. Another way to justify this hypothesis is through the following proposition.
	\begin{prop}
		Let $\left\{\left(\Veps^{m}, \Peps^{m}\right)\right\}_{m=1}^{M}$ be the iterates of Algorithm~\ref{Algo}. Then,
		\begin{enumerate}[label=(\roman*)]
			\item $\displaystyle \mathbb{E}\left[\max\limits_{1 \leq m \leq M}\varepsilon\left|\left|\Peps^{m}\right|\right|_{L^{2}}^{2}\right] \leq \varepsilon\left|\left|p_{0}\right|\right|^{2}_{L^{2}(\Omega; L^{2})} + c\frac{k}{\varepsilon}$,
			\item $\displaystyle \mathbb{E}\left[\sum_{m=1}^{M}\varepsilon\left|\left|\Peps^{m} - \Peps^{m-1}\right|\right|^{2}_{L^{2}}\right] \leq c\frac{k}{\varepsilon}$,
		\end{enumerate}
		for some genuine constant $c > 0$ independent of $k, h$ and $\varepsilon$.
	\end{prop}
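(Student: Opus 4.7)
The plan is to derive both bounds directly from the discrete mass-conservation equation of Algorithm~\ref{Algo}, converting pressure increments into $(k/\varepsilon)$-scaled velocity gradients controlled by Lemma~\ref{lemma a priori estimates}(i). The pivotal pointwise (in $m$) estimate $\|\Peps^m - \Peps^{m-1}\|_{L^2} \leq (k/\varepsilon)\|\nabla \Veps^m\|_{\mathbb{L}^2}$ falls out of testing the discrete pressure update with its own increment, and everything else is a bookkeeping consequence of that single bound.

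I would first establish (ii). Testing the second equation of Algorithm~\ref{Algo} with the admissible choice $q_h = \Peps^m - \Peps^{m-1} \in L_h$ gives
\[
\tfrac{\varepsilon}{k}\|\Peps^m - \Peps^{m-1}\|_{L^2}^2 = -(\mathrm{div}\,\Veps^m,\,\Peps^m - \Peps^{m-1}).
\]
Cauchy--Schwarz together with the trivial $\|\mathrm{div}\,v\|_{L^2} \leq \|\nabla v\|_{\mathbb{L}^2}$ yields the pointwise bound $\|\Peps^m - \Peps^{m-1}\|_{L^2} \leq (k/\varepsilon)\|\nabla \Veps^m\|_{\mathbb{L}^2}$. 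Squaring, multiplying by $\varepsilon$, summing over $m\in\{1,\dots,M\}$ and taking expectation reduces (ii) to a constant multiple of $\mathbb{E}\bigl[k\sum_m \|\nabla \Veps^m\|_{\mathbb{L}^2}^2\bigr]$, which Lemma~\ref{lemma a priori estimates}(i) controls by $C_1/\nu$, independently of $\varepsilon$, $k$ and $h$.

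For (i), I would use the telescoping identity $\Peps^m = p_h^0 + \sum_{j=1}^m(\Peps^j - \Peps^{j-1})$, apply the triangle inequality followed by discrete Cauchy--Schwarz over the $m \leq M = T/k$ summands, multiply by $\varepsilon$, take the maximum over $m$, then the expectation, and insert the pointwise increment bound established while proving (ii). The $L^2$-stability of $\rho_h$ from~\eqref{eq projection stability} yields $\|p_h^0\|_{L^2} \leq \|p_0\|_{L^2}$, producing the first term on the right-hand side.

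The delicate step is matching the precise $ck/\varepsilon$ scaling in~(i). The straightforward telescope-plus-Cauchy approach sketched above actually gives the weaker bound $\varepsilon\|p_0\|^2 + c/\varepsilon$, because the combinatorial factor $M = T/k$ arising from the Cauchy--Schwarz inequality on $M$ summands exactly cancels the $k$ supplied by the pointwise increment bound. To recover the sharper $ck/\varepsilon$ scaling one presumably must work with the pressure energy identity itself — testing the pressure equation by $\Peps^m$, substituting $k(\mathrm{div}\,\Veps^m,\Peps^m) = -\varepsilon(\Peps^m-\Peps^{m-1},\Peps^m)$ from the same mass equation, and then invoking the pointwise increment bound under a weighted Young's inequality so as to retain one extra factor of $k$ in the cross term, possibly together with the improved pressure regularity from~\cite[Proposition 3.1]{Menaldi2008Sritharan}. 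I expect this last reconciliation to be the main obstacle; parts (i) and (ii) otherwise reduce to the same one-line manipulation of the discrete mass equation.
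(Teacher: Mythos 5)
Your part~(ii) is correct and is essentially the paper's own argument: the paper derives the same pointwise bound $\varepsilon\left|\left|\Peps^{m}-\Peps^{m-1}\right|\right|_{L^{2}}\lesssim k\left|\left|\nabla\Veps^{m}\right|\right|_{\mathbb{L}^{2}}$ (there by testing the discrete mass equation against $\rho_{h}q$ for arbitrary $q\in L^{2}(D)$ rather than against the increment itself), then squares, sums over $m$, takes expectations and invokes Lemma~\ref{lemma a priori estimates}-(i), exactly as you do.

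For part~(i) your proposal is incomplete --- you concede you only reach $\varepsilon\left|\left|p_{0}\right|\right|^{2}_{L^{2}(\Omega;L^{2})}+c/\varepsilon$ --- but your diagnosis of the obstruction is precisely the delicate point, and it is worth seeing how the paper treats it. The paper follows exactly your telescoping route: summing $\varepsilon(\Peps^{m}-\Peps^{m-1},q)=-k(\mathrm{div}\,\Veps^{m},\rho_{h}q)$ gives $\varepsilon\left|\left|\Peps^{\ell}\right|\right|_{L^{2}}\lesssim\varepsilon\left|\left|\Peps^{0}\right|\right|_{L^{2}}+k\sum_{m=1}^{M}\left|\left|\nabla\Veps^{m}\right|\right|_{\mathbb{L}^{2}}$, and it then converts $\bigl(k\sum_{m=1}^{M}\left|\left|\nabla\Veps^{m}\right|\right|_{\mathbb{L}^{2}}\bigr)^{2}$ into $k\cdot k\sum_{m=1}^{M}\left|\left|\nabla\Veps^{m}\right|\right|^{2}_{\mathbb{L}^{2}}$ (hence $kC_{1}$ after expectation) by applying inequality~\eqref{eq sum-Jensen-type} to the $M$-term sum. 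That is exactly the step you flagged: the honest Cauchy--Schwarz constant for $M$ summands is $M=T/k$, which cancels the extra $k$ and yields only $TC_{1}$, i.e.\ your weaker $c/\varepsilon$; and \eqref{eq sum-Jensen-type} with a constant independent of the number of summands fails as soon as $J\geq 4$ (take all $\alpha_{n}$ equal), so the factor $k$ in the stated bound is not actually secured by the paper's argument either. Your fallback sketch does not close the gap: the substitution $k(\mathrm{div}\,\Veps^{m},\Peps^{m})=-\varepsilon(\Peps^{m}-\Peps^{m-1},\Peps^{m})$ is the very equation being tested, so the manipulation is circular, and bounding $k\sum_{m}\left|\left|\nabla\Veps^{m}\right|\right|_{\mathbb{L}^{2}}\left|\left|\Peps^{m}\right|\right|_{L^{2}}$ via Lemma~\ref{lemma a priori estimates}-(i)-(ii) only produces an $O(\varepsilon^{-1/2})$ term, still short of $ck/\varepsilon$; the cited regularity of Menaldi--Sritharan concerns the continuous problem and cannot be applied to the discrete iterates without additional work. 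In summary: (ii) stands and matches the paper; for (i) your argument proves only the weaker bound, and the sharper $ck/\varepsilon$ rests, in the paper, on an application of \eqref{eq sum-Jensen-type} that does not hold for $M$ terms.
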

	\begin{proof}
		Let $q \in L^{2}(D)\backslash \{0\}$. By identity~\eqref{def projection}, it holds that $(\Peps^{m} - \Peps^{m-1}, q) = \left(\Peps^{m} - \Peps^{m-1}, \rho_{h}q\right)$. Therefore, using Algorithm~\ref{Algo}, one obtains
		\begin{equation}\label{calc17}
			\begin{aligned}
				\varepsilon(\Peps^{m} - \Peps^{m-1}, q) = -k(div\Veps^{m}, \rho_{h}q) \lesssim k||\nabla \Veps^{m}||_{\mathbb{L}^{2}}||q||_{L^{2}},
			\end{aligned}
		\end{equation}
		thanks to the Cauchy-Schwarz inequality and the stability of $\rho_{h}$ in $L^{2}(D)$. Summing both sides over $m$ from $1$ to an arbitrary $\ell \in \{1, \dotsc, M\}$, then using the Cauchy-Schwarz inequality lead to
		\begin{equation*}
			\begin{aligned}
				\varepsilon\frac{(\Peps^{\ell}, q)}{||q||_{L^{2}}} \lesssim \varepsilon||\Peps^{0}||_{L^{2}} + k\sum_{m=1}^{M}||\nabla \Veps^{m}||_{\mathbb{L}^{2}}, \ \ \forall q \in L^{2}(D)\backslash \{0\}.
			\end{aligned}
		\end{equation*}
		Since $L^{2}(D)$ is the pivot space, the supremum over $q \in L^{2}(D)\backslash \{0\}$ of the left-hand side returns the $L^{2}$-norm of $\varepsilon\Peps^{\ell}$. Therewith, squaring both sides, taking the maximum over $\ell$, employing inequality~\eqref{eq sum-Jensen-type}, applying the mathematical expectation and using Lemma~\ref{lemma a priori estimates}-(i) give $\mathbb{E}\left[\max\limits_{1 \leq m \leq M}\varepsilon^{2}||\Peps^{m}||^{2}_{L^{2}}\right] \lesssim \mathbb{E}\left[\varepsilon^{2}||\Peps^{0}||_{L^{2}}^{2}\right] + kC_{1}$. Dividing by $\varepsilon$ and employing estimate~\eqref{eq projection stability} complete the proof of assertion $(i)$. On the other hand, by arguing in the same way, equation~\eqref{calc17} implies $\varepsilon||\Peps^{m} - \Peps^{m-1}||_{L^{2}} \lesssim k||\nabla \Veps^{m}||_{\mathbb{L}^{2}}$. Squaring both sides, summing over $m$ from $1$ to $M$, taking the mathematical expectation and utilizing Lemma~\ref{lemma a priori estimates}-(i) terminate the proof.
	\end{proof}
	
	The following lemma states a monotonicity property of the operator $u \mapsto -\nu\Delta u + \hat{B}(u,u)$. This feature together with the Lipschitz-continuity of the diffusion coefficient $g$ allow the avoidance of the Skorokhod theorem which forces the filtered probability space that was defined in Section~\ref{section notations and preliminaries} to be exchanged with a new one.
	\begin{lem}\label{lemma monotonicity}
		Assume that $L_{g} \leq \sqrt{\frac{\nu}{2C^{2}_{P}}}$ where $C_{P}>0$ is the Poincar{\'e} constant, and let $u, w \in \mathbb{H}^{1}_{0}$. For $z \coloneqq u -w$, the following inequality holds true:
		\begin{equation*}
			\left\langle -\nu\Delta z + \hat{B}(u,u) - \hat{B}(w, w) + \frac{27}{2\nu^{3}}\left|\left|w\right|\right|^{4}_{\mathbb{L}^{4}}z, z \right\rangle - \left|\left|g(u) - g(w)\right|\right|_{\mathscr{L}_{2}(\sqrt{Q}(K), \mathbb{L}^{2})}^{2} \geq 0.
		\end{equation*}
	\end{lem}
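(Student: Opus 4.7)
The plan is to evaluate the duality pairing termwise and show that the nonlinear and stochastic contributions are absorbed by the Laplacian dissipation together with the multiplicative correction $\tfrac{27}{2\nu^3}\|w\|^4_{\mathbb{L}^4}\|z\|^2_{\mathbb{L}^2}$. The Laplacian supplies $\nu \|\nabla z\|^2_{\mathbb{L}^2}$ by integration by parts, and the Lipschitz hypothesis~\ref{S3} combined with Poincar\'e and the standing bound $L_g^2 \leq \tfrac{\nu}{2C_P^2}$ immediately consumes half of it:
\[
\|g(u)-g(w)\|^2_{\mathscr{L}_2(\sqrt{Q}(K),\mathbb{L}^2)} \leq L_g^2\|z\|^2_{\mathbb{L}^2} \leq L_g^2 C_P^2 \|\nabla z\|^2_{\mathbb{L}^2} \leq \tfrac{\nu}{2}\|\nabla z\|^2_{\mathbb{L}^2}.
\]

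For the nonlinear pairing, bilinearity of $\hat{B}$ gives $\hat{B}(u,u)-\hat{B}(w,w) = \hat{B}(z,u)+\hat{B}(w,z)$; testing against $z$ and applying Proposition~\ref{prop trilinear form}-\ref{trilinear ii} (first to $\hat{b}(w,z,z)$, and then, after writing $u=z+w$, to $\hat{b}(z,z,z)$) collapses the contribution to $\hat{b}(z,w,z)$. Polarizing $\hat{b}(z,\xi,\xi)=0$ at $\xi=w+z$ yields the antisymmetry $\hat{b}(z,w,z) = -\hat{b}(z,z,w)$, so the nonlinear term to control is $-\hat{b}(z,z,w)$. Using the explicit form $\hat{b}(z,z,w) = ([z\cdot\nabla]z,w) + \tfrac{1}{2}([div\,z]z,w)$ and H\"older's inequality with exponents $(4,2,4)$ on each summand produces
\[
|\hat{b}(z,z,w)| \leq \tfrac{3}{2}\,\|z\|_{\mathbb{L}^4}\|\nabla z\|_{\mathbb{L}^2}\|w\|_{\mathbb{L}^4}.
\]

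Ladyzhenskaya's two-dimensional inequality $\|z\|_{\mathbb{L}^4} \leq 2^{1/4}\|z\|^{1/2}_{\mathbb{L}^2}\|\nabla z\|^{1/2}_{\mathbb{L}^2}$ upgrades this to $|\hat{b}(z,z,w)| \leq \kappa\|z\|^{1/2}_{\mathbb{L}^2}\|\nabla z\|^{3/2}_{\mathbb{L}^2}\|w\|_{\mathbb{L}^4}$ with $\kappa = 3\cdot 2^{-3/4}$, and Young's inequality with conjugate exponents $(4/3,4)$ and weight $\alpha$ chosen so that $\tfrac{3}{4}\kappa\alpha^{4/3} = \tfrac{\nu}{2}$ splits the right-hand side into exactly $\tfrac{\nu}{2}\|\nabla z\|^2_{\mathbb{L}^2} + \tfrac{27\kappa^4}{32\nu^3}\|z\|^2_{\mathbb{L}^2}\|w\|^4_{\mathbb{L}^4}$. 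Since $\kappa^4 \leq 16$, the residual coefficient is at most $\tfrac{27}{2\nu^3}$, and assembling the four pieces gives a lower bound $\nu\|\nabla z\|^2_{\mathbb{L}^2} - \tfrac{\nu}{2}\|\nabla z\|^2_{\mathbb{L}^2} - \tfrac{\nu}{2}\|\nabla z\|^2_{\mathbb{L}^2} = 0$, as required. The only delicate point I anticipate is this arithmetic calibration: one must verify $\kappa\leq 2$ (equivalently $\kappa^4\leq 16$) so that the coefficient produced by Young's inequality fits inside $\tfrac{27}{2\nu^3}$; everything else is dictated by the antisymmetry of $\hat{b}$ and the constraint on $L_g$.
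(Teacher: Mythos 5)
Your proof is correct, but it takes a different route from the paper. The paper disposes of the deterministic part in one line by citing \cite[Lemma 2.4]{Menaldi2008Sritharan}, which gives $\langle -\nu\Delta z + \hat{B}(u,u)-\hat{B}(w,w) + \tfrac{27}{2\nu^{3}}\|w\|^{4}_{\mathbb{L}^{4}}z, z\rangle \geq \tfrac{\nu}{2}\|\nabla z\|^{2}_{\mathbb{L}^{2}}$, and then absorbs the noise term exactly as you do, via the Lipschitz bound of \ref{S3}, Poincar\'e, and $L_{g}^{2}\leq \tfrac{\nu}{2C_{P}^{2}}$; your treatment of $g$ is therefore identical to the paper's. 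What you do differently is reprove the cited monotonicity estimate from scratch: the reduction $\langle\hat{B}(u,u)-\hat{B}(w,w),z\rangle = \hat{b}(z,w,z) = -\hat{b}(z,z,w)$ via bilinearity, Proposition~\ref{prop trilinear form}-\ref{trilinear ii} and polarization is correct, and your Young calibration ($\tfrac{3}{4}\kappa\alpha^{4/3}=\tfrac{\nu}{2}$, residual coefficient $\tfrac{27\kappa^{4}}{32\nu^{3}}$ with $\kappa = 3\cdot 2^{-3/4}$, so $\kappa^{4}=81/8\leq 16$) does land inside $\tfrac{27}{2\nu^{3}}$. This buys a self-contained argument with explicit constants, at the cost of redoing work the paper outsources. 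One point you state without justification, and which is actually load-bearing: the constant $\tfrac{3}{2}$ in $|\hat{b}(z,z,w)|\leq \tfrac{3}{2}\|z\|_{\mathbb{L}^{4}}\|\nabla z\|_{\mathbb{L}^{2}}\|w\|_{\mathbb{L}^{4}}$ requires $\|div\, z\|_{L^{2}}\leq \|\nabla z\|_{\mathbb{L}^{2}}$, which is not a pointwise H\"older estimate but the standard $\mathbb{H}^{1}_{0}$ identity obtained by integrating by parts twice and applying Cauchy--Schwarz; with the cruder pointwise bound $|div\,z|\leq\sqrt{2}\,|\nabla z|$ you would get $\kappa=(1+\tfrac{\sqrt{2}}{2})2^{1/4}>2$ and your calibration $\kappa^{4}\leq 16$ would fail by a hair. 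Adding that one-line justification closes the only gap.
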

	\begin{proof}
		From \cite[Lemma 2.4]{Menaldi2008Sritharan}, it holds that 
		\begin{equation*}
			\langle -\nu\Delta z + \hat{B}(u, u) - \hat{B}(w, w) + \frac{27}{2\nu^{3}}||w||^{4}_{\mathbb{L}^{4}}z, z\rangle \geq \frac{\nu}{2}||\nabla z||^{2}_{\mathbb{L}^{2}}.
		\end{equation*}
		It suffices now to subtract from both sides the term $||g(u) - g(w)||^{2}_{\mathscr{L}_{2}(\sqrt{Q}(K), \mathbb{L}^{2})}$, use assumption~\ref{S3}, then employ the Poincar{\'e} inequality.
	\end{proof}
	
	Beside Lemma~\ref{lemma monotonicity}, it is worth highlighting the strong convergence of $\{g(\Vkh^{+}) - g(\Vkh^{-})\}_{k,h}$ in $L^{2}(\Omega; L^{2}(0,T; \mathscr{L}_{2}(K, \mathbb{L}^{2})))$, which can be illustrated through assumption~\ref{S3} and Lemma~\ref{lemma a priori estimates}-(i) as follows
	\begin{equation}\label{eq conv g increment}
		\begin{aligned}
			\mathbb{E}\left[\int_{0}^{T}\big|\big|g(\Vkh^{+}) - g(\Vkh^{-})\big|\big|_{\mathscr{L}_{2}(K, \mathbb{L}^{2})}^{2}dt\right] \leq L_{g}^{2}k\mathbb{E}\left[\sum_{m=1}^{M}||\Veps^{m} - \Veps^{m-1}||^{2}_{\mathbb{L}^{2}}\right] \leq L_{g}^{2}C_{1}k \to 0.
		\end{aligned}
	\end{equation}
	The convergence demonstration down below is broken down into steps for clarity's sake.\newline
	\textbf{Step1: Weak convergence and divergence-free}\newline
	By virtue of Lemma~\ref{lemma a priori estimates}, the sublinearity of $g$ (see assumption~\ref{S3}) and inequality~\eqref{eq projection stability}, the sequences $\{\Vkh^{+}\}_{\varepsilon, k, h}$, $\{\sqrt{\varepsilon}\Pkh^{+}\}_{\varepsilon, k, h}$, $\{g(\Vkh^{-})\}_{\varepsilon, k, h}$ are bounded in the Banach spaces $L^{2}(\Omega; L^{\infty}(0,T; \mathbb{L}^{2})\cap L^{2}(0,T; \mathbb{H}^{1}_{0}))$, $L^{2}(\Omega; L^{\infty}(0,T; L^{2}(D)))$ and $L^{2}(\Omega; L^{2}(0,T: \mathscr{L}_{2}(K, \mathbb{L}^{2})))$, respectively. Therefore, the Banach-Alaoglu theorem ensures the existence of the limiting functions $v \in L^{2}(\Omega; L^{\infty}(0, T; \mathbb{L}^{2})\cap L^{2}(0,T; \mathbb{H}^{1}_{0}))$, $\rchi \in L^{2}(\Omega; L^{\infty}(0,T; L^{2}(D)))$, $G_{0} \in L^{2}(\Omega; L^{2}(0,T; \mathscr{L}_{2}(K, \mathbb{L}^{2})))$ and two subsequences (still denoted as their original sequences) $\{\Vkh^{+}\}_{\varepsilon, k, h}$, $\{\sqrt{\varepsilon}\Pkh^{+}\}_{\varepsilon, k, h}$ such that 
	\begin{align}
		&\Vkh^{+} \overset{\ast}{\rightharpoonup} v  &\mbox{ in } \ L^{2}(\Omega; L^{\infty}(0,T; \mathbb{L}^{2})),\label{conv1} \\& \Vkh^{+} \rightharpoonup v &\mbox{ in } \ L^{2}(\Omega; L^{2}(0,T: \mathbb{H}^{1}_{0})),\label{conv2} \\&
		\sqrt{\varepsilon}\Pkh^{+} \overset{\ast}{\rightharpoonup} \rchi &\mbox{ in } \ L^{2}(\Omega; L^{\infty}(0,T; L^{2}(D))),\label{conv3} \\&
		g(\Vkh^{-}) \rightharpoonup G_{0} &\mbox{ in } \ L^{2}(\Omega; L^{2}(0, T; \mathscr{L}_{2}(K, \mathbb{L}^{2}))).\label{conv4}
	\end{align}
	Beside convergence~\eqref{conv4}, it is also possible to acquire $g(\Vkh^{+}) \rightharpoonup G_{0}$ in $L^{2}(\Omega; L^{2}(0,T; \mathscr{L}_{2}(K, \mathbb{L}^{2})))$ as follows: for all $\phi \in L^{2}(\Omega; L^{2}(0,T; \mathscr{L}_{2}(K, \mathbb{L}^{2})))$, 
	\begin{equation}\label{conv5}
		\begin{aligned}
			&\left(g(\Vkh^{+}) - G_{0}(t), \phi(t)\right)_{\mathscr{L}_{2}} = \left(g(\Vkh^{+}) - g(\Vkh^{-}), \phi(t)\right)_{\mathscr{L}_{2}} + \left(g(\Vkh^{-}) - G_{0}(t), \phi(t)\right)_{\mathscr{L}_{2}}.
		\end{aligned}
	\end{equation}
	Now, integrate with respect to $t$, take the mathematical expectation, use results~\eqref{eq conv g increment} and \eqref{conv4} to complete the proof.\newline
	The obtained function $v$ is divergence-free. Indeed, let $q \in C_{c}^{\infty}(D)$ be a scalar function. From Algorithm~\ref{Algo}, one has $\varepsilon\left(\Peps^{m} - \Peps^{m-1}, \rho_{h}q\right) = -k\left(div\Veps^{m}, \rho_{h}q\right)$. Summing both sides over $m$ from $1$ to $M$ leads to $\int_{0}^{T}\left(div\Vkh^{+}, \rho_{h}q\right)dt = \varepsilon\left(\Peps^{0}, \rho_{h}q\right) - \sqrt{\varepsilon}\left(\sqrt{\varepsilon}\Pkh^{+}(T), \rho_{h}q\right).$
	The mathematical expectation of the right-hand side goes to $0$ as $\varepsilon, k, h \to 0$ due to convergence~\eqref{conv3} and estimate~\eqref{eq projection stability}. Hence,
	\begin{equation*}
		\begin{aligned}
			\mathbb{E}\left[\int_{0}^{T}\left(div\Vkh^{+}, q\right)dt\right] = \mathbb{E}\left[\int_{0}^{T}\left(div\Vkh^{+}, q - \rho_{h}q\right)dt\right] + \mathbb{E}\left[\int_{0}^{T}\left(div\Vkh^{+}, \rho_{h}q\right)dt\right]
		\end{aligned}
	\end{equation*}
	converges to $0$ as $\varepsilon, k, h \to 0$, thanks to estimate~\eqref{eq FE error estimate2} and convergence $div\Vkh^{+} \to div(v)$ in $L^{2}(\Omega; L^{2}(0,T; L^{2}(D)))$ which follows straightforwardly from result~\eqref{conv2}. Subsequently, $div\Vkh^{+} \rightharpoonup 0$ in $L^{2}(\Omega; L^{2}(0,T; L^{2}(D)))$ which implies $div(v) = 0$ $\mathbb{P}$-a.s. and a.e. in $(0,T) \times D$. \newline
	Let $\mathscr{R} \colon \mathbb{H}^{1}_{0} \to \mathbb{H}^{-1}$ be defined by $\mathscr{R}(u) \coloneqq -\nu\Delta u + \hat{B}(u, u)$, for all $u \in \mathbb{H}^{1}_{0}$. From Algorithm~\ref{Algo}, and for all $\varphi \in \mathcal{V}$ such that $\varphi_{h} \coloneqq \Pi_{h}\varphi$, it follows
	\begin{equation}\label{calc10}
		\begin{aligned}
			\int_{0}^{T}\langle \mathscr{R}(\Vkh^{+}) + \nabla \Pkh^{+}, \varphi_{h} \rangle dt &= -\left(\Vkh^{+}(T) - \Vkh^{-}(0), \varphi_{h}\right) + \int_{0}^{T}\langle f^{+}, \varphi_{h} \rangle dt \\&\hspace{10pt}+ \left(\int_{0}^{T}g(\Vkh^{-})dW(t), \varphi_{h}\right).
		\end{aligned}
	\end{equation}
	Owing to results~\eqref{conv1} and \eqref{conv4} along with the strong convergence of $f^{+}$ in $L^{2}(\Omega; L^{2}(0,T; \mathbb{H}^{-1}))$ (see \cite[Lemma III.4.9]{temam2001navier}), the mathematical expectation of the right-hand side of equation~\eqref{calc10} is convergent. Therewith, define $\mathscr{R}_{0}$ by
	\begin{equation*}
		\mathbb{E}\left[\int_{0}^{T}\langle \mathscr{R}_{0}(t), \varphi\rangle dt\right] = \lim\limits_{\varepsilon, k, h \to 0}\mathbb{E}\left[\int_{0}^{T}\langle\mathscr{R}(\Vkh^{+}) + \nabla\Pkh^{+}, \Pi_{h}\varphi\rangle dt\right], \ \forall \varphi \in \mathcal{V}.
	\end{equation*}
	Subsequently, the limiting function $v$ satisfies $\mathbb{P}$-a.s. and for all $(t, \varphi) \in [0,T]\times \mathcal{V}$ the following:
	\begin{equation}\label{calc11}
		\left(v(t) - v_{0}, \varphi\right) + \int^{t}_{0}\langle\mathscr{R}_{0}(s), \varphi \rangle ds = \int^{t}_{0}\langle f(s), \varphi\rangle ds + \left(\int_{0}^{t}G_{0}(s)dW(s), \varphi\right).
	\end{equation}
	\textbf{Step2: Identification of $\mathscr{R}_{0}$ and $G_{0}$}\newline
	For $\sigma \in C\Big([0,T], [C_{c}^{\infty}(D)]^{2}\Big)$ and all $m \in \{1, \dotsc, M\}$, denote $\sigma^{+}_{h}(t) \coloneqq \sigma_{h}^{m} = \Pi_{h}\sigma(t_{m})$ and define $\displaystyle r^{+}(t) \coloneqq r^{m} \coloneqq \frac{27}{\nu^{3}}k\sum_{n=1}^{m}\left|\left|\sigma^{n}_{h}\right|\right|^{4}_{\mathbb{L}^{4}}$ for all $t \in (t_{m-1}, t_{m}]$, together with an exponential non-increasing function $\eta \colon [0,T] \to \mathbb{R}$ verifying $\eta(0) = 0$, and having the discrete forms $\eta^{+}(t) \coloneqq \eta^{m} \coloneqq e^{-r^{+}(t)}$ for all $t \in (t_{m-1}, t_{m}]$ and $\eta^{-}(t) \coloneqq \eta^{m-1}$ for all $t \in [t_{m-1}, t_{m})$.
	Setting $\left(\varphi_{h}, q_{h}\right) = \left(\Veps^{m}, \Peps^{m}\right)$ in Algorithm~\ref{Algo}, using Cauchy-Schwarz and Young's inequalities, identity $(a - b, a) = \frac{1}{2}||a||^{2}_{\mathbb{L}^{2}} - \frac{1}{2}||b||^{2}_{\mathbb{L}^{2}} + \frac{1}{2}||a - b||^{2}_{\mathbb{L}^{2}}$, and finally multiplying by $\eta^{m-1}$ yield
	\begin{equation}\label{calc12}
		\begin{aligned}
			&\eta^{m-1}(||\Veps^{m}||^{2}_{\mathbb{L}^{2}} - ||\Veps^{m-1}||^{2}_{\mathbb{L}^{2}}) +2\eta^{m-1}k\left\langle \mathscr{R}(\Veps^{m}) + \nabla\Peps^{m}, \Veps^{m}\right\rangle \leq 2\eta^{m-1}k\langle f^{m}, \Veps^{m}\rangle \\&\hspace{10pt}+ \eta^{m-1}||g(\Veps^{m-1})\Delta_{m}W||^{2}_{\mathbb{L}^{2}} + 2\eta^{m-1}\left(g(\Veps^{m-1})\Delta_{m}W, \Veps^{m-1}\right).
		\end{aligned}
	\end{equation}
	Note that $\sum_{m=1}^{M}\eta^{m-1}(||\Veps^{m}||^{2}_{\mathbb{L}^{2}} - ||\Veps^{m-1}||^{2}_{\mathbb{L}^{2}}) = \int_{0}^{T}\eta^{-}(t)d_{t}||\Vkh^{+}||^{2}_{\mathbb{L}^{2}}dt$, and through equation~\eqref{calc5'}, it holds that $\mathbb{E}\left[||g(\Veps^{m-1})\Delta_{m}W||^{2}_{\mathbb{L}^{2}}\right] = k\mathbb{E}\left[||g(\Veps^{m-1})||^{2}_{\mathscr{L}_{2}(\sqrt{Q}(K), \mathbb{L}^{2})}\right]$. Therefore, taking the sum over $m$ from $1$ to $M$, employing Proposition~\ref{prop discrete derivation}-(ii), then applying the mathematical expectation to equation~\eqref{calc12} give
	\begin{equation}\label{calc13}
		\begin{aligned}
			&\mathbb{E}\left[\eta^{+}(T)||\Vkh^{+}(T)||^{2}_{\mathbb{L}^{2}} - ||\Vkh^{-}(0)||^{2}_{\mathbb{L}^{2}}\right] \leq \mathbb{E}\left[\int_{0}^{T}||\Vkh^{+}||^{2}_{\mathbb{L}^{2}}d_{t}\eta^{+}dt\right] \\&-\mathbb{E}\left[\int_{0}^{T}\eta^{-}(t)\left\langle 2\mathscr{R}(\Vkh^{+}) + 2\nabla\Pkh^{+}, \Vkh^{+} \right\rangle dt\right] + \mathbb{E}\left[2\int_{0}^{T}\eta^{-}(t)\left\langle f^{+}, \Vkh^{+}\right\rangle dt\right] \\&+\mathbb{E}\left[\int_{0}^{T}\eta^{-}(t)||g(\Vkh^{-})||^{2}_{\mathscr{L}_{2}(\sqrt{Q}(K), \mathbb{L}^{2})}dt\right] \eqqcolon I + II + III + IV,
		\end{aligned}
	\end{equation}
	where the last term on the right-hand side of equation~\eqref{calc12} vanishes after taking its expectation due to assumption~\ref{S3} and the measurability of $\{\Veps^{m}\}_{m}$ (see Lemma~\ref{lemma existence+measurability}). By virtue of Proposition~\ref{prop discrete derivation}-(iii), it follows that $d_{t}\eta^{+} = -\frac{27}{\nu^{3}}\eta^{-}||\sigma_{h}^{+}||^{4}_{\mathbb{L}^{4}} + \frac{27^{2}k}{2\nu^{6}}e^{\delta(t)}||\sigma_{h}^{+}||^{8}_{\mathbb{L}^{4}}$, for some $\delta \in (-r^{+}, -r^{-})$. Therefore,
	\begin{equation*}
		\begin{aligned}
			I = -\mathbb{E}\left[\int_{0}^{T}\eta^{-}(t)\frac{27}{\nu^{3}}||\sigma_{h}^{+}||^{4}_{\mathbb{L}^{4}}||\Vkh^{+}||^{2}_{\mathbb{L}^{2}}dt\right] + \frac{27^{2}}{2\nu^{6}}k\mathbb{E}\left[\int_{0}^{T}||\Vkh^{+}||^{2}_{\mathbb{L}^{2}}e^{\delta(t)}||\sigma_{h}^{+}||^{8}_{\mathbb{L}^{4}}dt\right] \eqqcolon I_{1} + I_{2}.
		\end{aligned}
	\end{equation*}
	Obviously, $I_{2}$ goes to $0$ as $k, h, \varepsilon \to 0$ thanks to Lemma~\ref{lemma a priori estimates}. $I_{1}$ can be rewritten as follows
	\begin{equation*}
		\begin{aligned}
			I_{1} &= -\frac{27}{\nu^{3}}\mathbb{E}\left[\int_{0}^{T}\eta^{-}||\sigma_{h}^{+}||^{4}_{\mathbb{L}^{4}}||\Vkh^{+} - \sigma_{h}^{+}||^{2}_{\mathbb{L}^{2}}dt\right] -\frac{27}{\nu^{3}}\mathbb{E}\left[\int_{0}^{T}\eta^{-}||\sigma_{h}^{+}||^{4}_{\mathbb{L}^{4}}\left\{2\left(\Vkh^{+}, \sigma_{h}^{+}\right) - ||\sigma_{h}^{+}||_{\mathbb{L}^{2}}^{2}\right\}dt\right] \\&\eqqcolon I_{1,1} + I_{1,2}.
		\end{aligned}
	\end{equation*}
	Making use of result~\eqref{conv2} along with the strong convergence of $\{\sigma_{h}^{m}\}_{m}$ to $\sigma$ in $C([0,T]; \mathbb{H}^{1}_{0})$, it can be easily shown that $I_{1,2} \to -\frac{27}{\nu^{3}}\mathbb{E}\left[\int_{0}^{T}\eta(t)||\sigma(t)||^{4}_{\mathbb{L}^{4}}\left\{2\Big(v(t), \sigma(t)\Big) - ||\sigma(t)||^{2}_{\mathbb{L}^{2}}\right\}dt\right]$.
	On the other hand,
	\begin{equation*}
		\begin{aligned}
			II &= -\mathbb{E}\left[\int_{0}^{T}\eta^{-}\langle 2\mathscr{R}(\Vkh^{+}) - 2\mathscr{R}(\sigma_{h}^{+}), \Vkh^{+} - \sigma_{h}^{+}\rangle dt\right] - \mathbb{E}\left[\int_{0}^{T}\eta^{-}\langle 2\nabla\Pkh^{+}, \Vkh^{+} - \sigma_{h}^{+} \rangle dt\right] \\&\hspace{10pt}- \mathbb{E}\left[\int_{0}^{T}\eta^{-}\langle 2\mathscr{R}(\Vkh^{+}) + 2\nabla\Pkh^{+} - 2\mathscr{R}(\sigma_{h}^{+}), \sigma_{h}^{+}\rangle dt\right] - \mathbb{E}\left[\int_{0}^{T}\eta^{-}\langle 2\mathscr{R}(\sigma_{h}^{+}), \Vkh^{+}\rangle dt\right] \\&\eqqcolon II_{1} + II_{2} + II_{3} + II_{4}.
		\end{aligned}
	\end{equation*}
	$II_{2}$ goes to $0$ provided $\frac{k}{\varepsilon} \to 0$. Indeed, by Cauchy-Shwarz's inequality, estimate~\eqref{eq sum-Jensen-type} and Lemma~\ref{lemma a priori estimates}, it follows
	\begin{equation}\label{calc13'}
		\begin{aligned}
			&II_{2} = 2\mathbb{E}\left[k\sum_{m=1}^{M}\eta^{m-1}\left(\Peps^{m}, div(\Veps^{m} - \sigma_{h}^{m})\right)\right] \lesssim \mathbb{E}\left[k\sum_{m=1}^{M}||\Peps^{m}||_{L^{2}}||\nabla(\Veps^{m} - \sigma^{m}_{h})||_{\mathbb{L}^{2}}\right] \\&\leq \sqrt{\frac{k}{\varepsilon}}\mathbb{E}\left[\varepsilon\max\limits_{1 \leq m \leq M}||\Peps^{m}||^{2}_{L^{2}}\right]^{\frac{1}{2}}\mathbb{E}\left[3k\sum_{m=1}^{M}||\nabla(\Veps^{m} - \sigma_{h}^{m})||^{2}_{\mathbb{L}^{2}}\right]^{\frac{1}{2}} \lesssim \sqrt{\frac{k}{\varepsilon}}C_{1} \to 0.
		\end{aligned}
	\end{equation}
	Moreover, since $\{\sigma_{h}^{m}\}_{m}$ is strongly convergent toward $\sigma$ in $C([0,T]; \mathbb{H}^{1}_{0})$, and by the definition of operator $\mathscr{R}_{0}$, one obtains $II_{3} \to -\mathbb{E}\left[\int_{0}^{T}\eta(t)\langle 2\mathscr{R}_{0}(t) - 2\mathscr{R}(\sigma(t)), \sigma(t) \rangle dt\right]$ as $k, h, \varepsilon \to 0$. Similarly, $II_{4} \to -\mathbb{E}\left[\int_{0}^{T}\eta(t)\langle 2\mathscr{R}(\sigma(t)), v(t)\rangle dt\right]$, thanks to convergence~\eqref{conv2}. As mentioned in Step 1, $\{f^{m}\}_{m}$ converges strongly toward $f$ in $L^{2}(\Omega; L^{2}(0,T; \mathbb{H}^{-1}))$. The latter together with convergence~\eqref{conv2} imply that $III \to \mathbb{E}\left[2\int_{0}^{T}\eta(t)\left\langle f(t), v(t) \right\rangle dt\right]$. Moving on to term $IV$, it can be reformulated as follows:
	\begin{equation*}
		\begin{aligned}
			&IV = \mathbb{E}\Big[\int_{0}^{T}\eta^{-}\Big\{||g(\Vkh^{-}) - g(\Vkh^{+})||^{2}_{\mathscr{L}_{2}^{Q}} + ||g(\Vkh^{+}) - g(\sigma_{h}^{+})||_{\mathscr{L}_{2}^{Q}}^{2} - ||g(\sigma_{h}^{+})||^{2}_{\mathscr{L}_{2}^{Q}} \\&+ 2\left(g(\Vkh^{+}), g(\sigma_{h}^{+})\right)_{\mathscr{L}_{2}^{Q}} + 2\left(g(\Vkh^{-}) - g(\Vkh^{+}), g(\Vkh^{+})\right)_{\mathscr{L}_{2}^{Q}}\Big\}dt\Big] \coloneqq IV_{1} + \dotsc + IV_{5},
		\end{aligned}
	\end{equation*}
	where $\mathscr{L}_{2}^{Q} \coloneqq \mathscr{L}_{2}(\sqrt{Q}(K), \mathbb{L}^{2})$. From equation~\eqref{eq conv g increment}, it holds that $IV_{1} \to 0$.	Furthermore, Lemma~\ref{lemma monotonicity} yields $I_{1,1} + II_{1} + IV_{2} \leq 0$, the strong convergence of $\{\sigma_{h}^{m}\}_{m}$ together with result~\eqref{conv5} grant both $IV_{3} \to -\mathbb{E}\left[\int_{0}^{T}\eta(t)||g(\sigma(t))||^{2}_{\mathscr{L}_{2}^{Q}}dt\right]$ and $IV_{4} \to \mathbb{E}\left[2\int_{0}^{T}\eta(t)\left(G_{0}(t), g(\sigma(t))\right)_{\mathscr{L}_{2}^{Q}}dt\right]$. Finally, $IV_{5} \to 0$ by virtue of convergences~\eqref{eq conv g increment} and \eqref{conv5}. Putting it all together, equation~\eqref{calc13} becomes
	\begin{equation}\label{calc14}
		\begin{aligned}
			&\lim\limits_{\varepsilon, k, h \to 0}\mathbb{E}\left[\eta^{+}(T)||\Vkh^{+}(T)||^{2}_{\mathbb{L}^{2}} - ||\Vkh^{-}(0)||_{\mathbb{L}^{2}}^{2}\right] \leq \mathbb{E}\left[\int_{0}^{T}\eta'(t)\left\{2\Big(v, \sigma\Big) - ||\sigma||^{2}_{\mathbb{L}^{2}}\right\}dt\right] \\&- 2\mathbb{E}\Big[\int_{0}^{T}\eta(t)\Big\{\langle \mathscr{R}_{0} -\mathscr{R}(\sigma), \sigma\rangle + \langle \mathscr{R}(\sigma) - f(t), v\rangle + \frac{1}{2}||g(\sigma)||^{2}_{\mathscr{L}_{2}^{Q}} - \Big(G_{0}, g(\sigma)\Big)_{\mathscr{L}_{2}^{Q}}\Big\}dt\Big],
		\end{aligned}
	\end{equation}
	where $\eta(t) = \exp\left(-\frac{27}{\nu^{3}}\int_{0}^{t}||\sigma(s)||^{4}_{\mathbb{L}^{4}}ds\right)$. Taking into account that $\mathbb{E}\left[\eta(T)||v(T)||^{2}_{\mathbb{L}^{2}} - ||v_{0}||_{\mathbb{L}^{2}}^{2}\right]$ is smaller than the left-hand side of equation~\eqref{calc14} (thanks to result~\eqref{conv1}) and applying It{\^o}'s formula to the process $(t, v) \mapsto \eta(t)||v||^{2}_{\mathbb{L}^{2}}$ (recall that $v$ satisfies equation~\eqref{calc11}) lead to
	\begin{equation}\label{calc15}
		\begin{aligned}
			&\mathbb{E}\left[\int_{0}^{T}\eta'(t)||v(t) - \sigma(t)||_{\mathbb{L}^{2}}^{2}dt\right] + \mathbb{E}\left[\int_{0}^{T}\eta(t)\big|\big|G_{0}(t) - g(\sigma(t))\big|\big|_{\mathscr{L}_{2}(\sqrt{Q}(K), \mathbb{L}^{2})}^{2}dt\right] \\&\leq 2\mathbb{E}\left[\int_{0}^{T}\eta(t)\big\langle \mathscr{R}(\sigma(t)) - \mathscr{R}_{0}(t), \sigma(t) - v(t)\big\rangle dt\right],\ \ \forall \sigma \in C([0,T]; [C_{c}^{\infty}(D)]^{2}).
		\end{aligned}
	\end{equation}
	Arguing by density, it can be shown that inequality~\eqref{calc15} holds for all $\sigma \in L^{4}(\Omega; L^{\infty}(0,T; \mathbb{L}^{2})) \cap L^{2}(\Omega; L^{2}(0,T; \mathbb{H}^{1}_{0}))$. Therefore, setting $\sigma = v$ yields $G_{0} = g(v)$ $\mathbb{P}$-a.s. and a.e. in $[0,T]\times D$. With that said, the second term on the left-hand side of equation~\eqref{calc15} cancels out. To identify $\mathscr{R}_{0}$, it suffices to consider $\sigma = v + \mu u$ for $\mu > 0$ and $u \in L^{4}(\Omega; L^{\infty}(0,T; \mathbb{L}^{2}))\cap L^{2}(\Omega; L^{2}(0,T; \mathbb{H}^{1}_{0}))$. Subsequently, 
	\begin{equation*}
		\mu\mathbb{E}\left[\int_{0}^{T}\eta'(t)||u(t)||_{\mathbb{L}^{2}}^{2}dt\right] \leq 2\mathbb{E}\left[\int_{0}^{T}\eta(t)\big\langle \mathscr{R}(v(t) + \mu u(t)) -\mathscr{R}_{0}(t), u(t)\big\rangle dt\right].
	\end{equation*}
	Letting $\mu \to 0$ and taking into consideration the hemicontinuity of the operator $\mathscr{R}$, one infers that $\mathbb{E}\left[\int_{0}^{T}\eta(t)\big\langle \mathscr{R}(v(t)) - \mathscr{R}_{0}(t), u(t)\big\rangle dt\right] \geq 0$, for all $u \in L^{4}(\Omega; L^{\infty}(0,T; \mathbb{L}^{2})) \cap L^{2}(\Omega; L^{2}(0,T; \mathbb{H}^{1}_{0}))$. Consequently, $\mathscr{R}_{0} = \mathscr{R}(v)$ in $L^{2}(\Omega; L^{2}(0,T; \mathbb{H}^{-1}))$.\newline
	\textbf{Step 3: Verification of $v$ as NSE solution}\newline
	The obtained function $v$ is henceforth a solution to equations~\eqref{eq Navier-Stokes} in the sense of Definition~\ref{definition NSE sol}. Indeed, the identifications in Step 2 turn equation~\eqref{calc11} into
	\begin{equation*}
		\begin{aligned}
			&\left(v(t) , \varphi\right) + \nu\int_{0}^{t}\left(\nabla v(s), \nabla \varphi\right)ds + \int_{0}^{t}\langle \hat{B}(v(t), v(t)), \varphi\rangle ds \\&= \left(v_{0}, \varphi\right) + \int_{0}^{t}\langle f(s), \varphi \rangle ds + \left(\int_{0}^{t}g(v(s))dW(s), \varphi\right), \ \ \forall \varphi \in \mathbb{V}.
		\end{aligned}
	\end{equation*}	
	By definition, $\hat{B}(v, v) = \left([v\cdot\nabla] + \frac{1}{2}div (v)\right)v = [v\cdot\nabla]v$, thanks to Step 2, where the divergence-free of $v$ was illustrated. Finally, $v \in L^{2}(\Omega; C([0,T]; \mathbb{L}^{2}))$ can be easily proven via equation~\eqref{calc11} by using \cite{Pardoux1975}.\newline
	\textbf{Step 4: Convergence of the whole sequence}\newline
	Convergence results that were discovered within Step 1 are all up to a subsequence. However, due to the uniqueness of $v$ (see \cite[Proposition 3.2]{menaldi2002stochastic}), it follows that the whole sequence $\{\Vkh^{+}\}_{\varepsilon, k,h}$ is convergent toward $v$.
	\subsection{A linear version of Algorithm~\ref{Algo}}\label{section linearized version}
	In terms of simulations, a less time-consuming numerical scheme can be embodied through a linear Algorithm. This can be made up using a linearization of the trilinear term in Algorithm~\ref{Algo} as follows:
	\begin{algo}\label{Algo2}
		Starting from an initial datum $\left(v_{h}^{0}, p_{h}^{0}\right) \in \mathbb{H}_{h}\times L_{h}$, if $\left(\Veps^{m-1}, \Peps^{m-1}\right) \in \mathbb{H}_{h} \times L_{h}$ is known for some $m \in \{1, \dotsc, M\}$, find $\left(\Veps^{m}, \Peps^{m}\right) \in \mathbb{H}_{h} \times L_{h}$ that satisfies $\mathbb{P}$-a.s. the following:
		\begin{equation*}
			\begin{cases}
				\begin{aligned}
					&\left(\Veps^{m} - \Veps^{m-1}, \varphi_{h}\right) + k\nu \left(\nabla \Veps^{m}, \nabla \varphi\right) + k\hat{b}(\Veps^{m-1}, \Veps^{m}, \varphi_{h}) - \left(\Peps^{m}, div\varphi_{h}\right) \\&\hspace{20pt}= k\left\langle f^{m}, \varphi_{h} \right\rangle + \left(g(\Veps^{m-1})\Delta_{m}W, \varphi_{h}\right), \ \ \forall \varphi \in \mathbb{H}_{h},
				\end{aligned}\\
				\frac{\varepsilon}{k}\left(\Peps^{m} - \Peps^{m-1}, q_{h}\right) + \left(div\Veps^{m}, q_{h}\right) = 0, \ \ \forall q_{h} \in L_{h},
			\end{cases}
		\end{equation*}
		where $f^{m}$, $\Delta_{m}W$ are defined in Algorithm~\ref{Algo} and $\left(\Veps^{0}, \Peps^{0}\right) \coloneqq (v_{h}^{0}, p_{h}^{0})$.
	\end{algo}
	Observe that $\hat{b}(\Veps^{m-1}, \Veps^{m}, \Veps^{m}) = 0$, thanks to Proposition~\ref{prop trilinear form}-(ii). Therefore, iterates $\{(\Veps^{m}, \Peps^{m})\}^{M}_{m=1}$ of Algorithm~\ref{Algo2} satisfy Lemmas~\ref{lemma existence+measurability}, \ref{lemma a priori estimates} and they fulfill better uniqueness properties than those of Algorithm~\ref{Algo}, as demonstrated in Lemma\ref{lemma uniqueness linear}. However, due to the infamous properties of $\hat{b}$, the initial datum $v_{h}^{0}$ should undergo a new assumption which consists of a uniform bound in $h$ of $\left|\left|\nabla v_{h}^{0}\right|\right|_{\mathbb{L}^{2}}$, as explained beneath the proof of Lemma~\ref{lemma uniqueness linear}.
	\begin{lem}\label{lemma uniqueness linear}
		Iterates $\{(\Veps^{m}, \Peps^{m})\}^{M}_{m=1}$ of Algorithm~\ref{Algo2} are unique $\mathbb{P}$-a.s. in $\Omega$ and a.e. in $[0,T]\times D$.
	\end{lem}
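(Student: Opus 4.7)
The plan is to argue by induction on $m \in \{0, 1, \dotsc, M\}$, taking advantage of the fact that Algorithm~\ref{Algo2} is linear in $(\Veps^m, \Peps^m)$ once $(\Veps^{m-1}, \Peps^{m-1})$ is fixed. The base case $m = 0$ is immediate since any two candidates share the prescribed initial datum $(v_h^0, p_h^0)$. Assuming uniqueness up to step $m-1$, let $(\Veps^m, \Peps^m)$ and $(U_\varepsilon^m, P_\varepsilon^m)$ be two solutions at step $m$, and set $Z_\varepsilon^m := \Veps^m - U_\varepsilon^m$, $Q_\varepsilon^m := \Peps^m - P_\varepsilon^m$. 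Subtracting the two instances of Algorithm~\ref{Algo2} eliminates the forcing $f^m$, the noise contribution $g(\Veps^{m-1})\Delta_m W$ and the data $\Veps^{m-1}, \Peps^{m-1}$ altogether; this is precisely the structural improvement over the fully implicit scheme.

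I would then test the resulting linear system with $\varphi_h = Z_\varepsilon^m$ and $q_h = kQ_\varepsilon^m$ and add the two equations. The pressure-divergence cross-terms cancel exactly, and by Proposition~\ref{prop trilinear form}-\ref{trilinear ii} the convective contribution $k\hat{b}(\Veps^{m-1}, Z_\varepsilon^m, Z_\varepsilon^m)$ vanishes identically (pointwise in $\omega$, regardless of the size of $\Veps^{m-1}$). What remains is the clean energy identity
\begin{equation*}
\|Z_\varepsilon^m\|_{\mathbb{L}^2}^2 + k\nu\|\nabla Z_\varepsilon^m\|_{\mathbb{L}^2}^2 + \varepsilon\|Q_\varepsilon^m\|_{L^2}^2 = 0, \quad \mathbb{P}\text{-a.s.},
\end{equation*}
from which $Z_\varepsilon^m = 0$ and $Q_\varepsilon^m = 0$ a.e.\ in $D$ and $\mathbb{P}$-a.s.\ on the whole of $\Omega$, closing the induction.

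Contrasting this with Lemma~\ref{lemma iterates uniqueness}, no $\|\Veps^m\|_{\mathbb{L}^4}^4$-type term shows up on the right-hand side, so no restriction of $\omega$ to the subsets $\Omega_\delta^1, \Omega_\delta^2$, no smallness condition on the discretization parameters relative to $\nu$, and no inverse inequality are required. For this reason I do not anticipate any genuine obstacle: the argument boils down to a one-line discrete energy estimate, made possible entirely by the linearization of the convective term and the antisymmetry $\hat{b}(u, w, w) = 0$. The only minor point to verify is the measurability of each $(\Veps^m, \Peps^m)$, needed to give the induction hypothesis content, but this is granted by the analogue of Lemma~\ref{lemma existence+measurability} applied to Algorithm~\ref{Algo2} (already noted by the author).
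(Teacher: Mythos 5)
Your proposal is correct and follows essentially the same route as the paper: induction on $m$, subtraction of the two solution sets so that the noise and data terms cancel (since $\Veps^{m-1}=U_{\varepsilon}^{m-1}$), testing with $(Z_{\varepsilon}^{m}, Q_{\varepsilon}^{m})$ so that the linearized convective term vanishes by Proposition~\ref{prop trilinear form}-\ref{trilinear ii}, and concluding from the resulting energy identity $\|Z_{\varepsilon}^{m}\|_{\mathbb{L}^{2}}^{2} + \varepsilon\|Q_{\varepsilon}^{m}\|_{L^{2}}^{2} + k\nu\|\nabla Z_{\varepsilon}^{m}\|_{\mathbb{L}^{2}}^{2} = 0$ that uniqueness holds on all of $\Omega$. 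Your observation that no $\mathbb{L}^{4}$-term, sample-set restriction, or inverse inequality is needed matches the paper's point of contrast with Lemma~\ref{lemma iterates uniqueness}.
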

	\begin{proof}
		Let $\{(\Veps^{m}, \Peps^{m})\}_{m=1}^{M}$ and $\{(U_{\varepsilon}^{m}, P^{m}_{\varepsilon})\}_{m=1}^{M}$ be two solutions to Algorithm~\ref{Algo2} such that $(\Veps^{0}, \Peps^{0}) = (U^{0}_{\varepsilon}, P^{0}_{\varepsilon}) = (v_{h}^{0}, p_{h}^{0})$. Denote $Z^{m}_{\varepsilon} \coloneqq \Veps^{m} - U_{\varepsilon}^{m}$ and $Q_{\varepsilon}^{m} \coloneqq \Peps^{m} - P_{\varepsilon}^{m}$, for all $m \in \{0, 1, \dotsc, M\}$. The following equation is $\mathbb{P}$-a.s. satisfied by $\{(Z_{\varepsilon}^{m}, Q_{\varepsilon}^{m})\}^{M}_{m=1}$:
		\begin{equation}\label{calc16}
			\begin{cases}
				\begin{aligned}
					&\left(Z_{\varepsilon}^{m} - Z_{\varepsilon}^{m-1}, \varphi_{h}\right) + k\nu\left(\nabla Z_{\varepsilon}^{m}, \nabla\varphi_{h}\right) + k\left\langle \hat{B}(\Veps^{m-1}, \Veps^{m}) - \hat{B}(U^{m-1}_{\varepsilon}, U^{m}_{\varepsilon}), \varphi_{h}\right\rangle \\&\hspace{20pt}- k\left(Q_{\varepsilon}^{m}, div\varphi_{h}\right) = \left([g(\Veps^{m-1}) - g(U_{\varepsilon}^{m-1})]\Delta_{m}W, \varphi_{h}\right), \ \ \forall \varphi \in \mathbb{H}_{h},
				\end{aligned}\\
				\frac{\varepsilon}{k}\left(Q_{\varepsilon}^{m} - Q_{\varepsilon}^{m-1}, q_{h}\right) + \left(div Z_{\varepsilon}^{m}, q_{h}\right) = 0, \ \ \forall q_{h} \in L_{h}.
			\end{cases}
		\end{equation}
		For $m=1$, it follows that $g(\Veps^{0}) - g(U^{0}_{\varepsilon}) = 0$ and $\hat{B}(\Veps^{0}, \Veps^{1}) - \hat{B}(U^{0}_{\varepsilon}, U^{1}_{\varepsilon}) = \hat{B}(\Veps^{0} - \Veps^{0}, Z_{\varepsilon}^{1}) = 0$. Hence, setting $(\varphi_{h}, q_{h}) = (Z_{\varepsilon}^{1}, Q_{\varepsilon}^{1})$ in equations~\eqref{calc16} yields $||Z^{1}_{\varepsilon}||^{2}_{\mathbb{L}^{2}} + \varepsilon||Q^{1}_{\varepsilon}||_{L^{2}}^{2} + k\nu||\nabla Z_{\varepsilon}^{1}||^{2}_{\mathbb{L}^{2}} = 0$ which implies $Z_{\varepsilon}^{1} = Q_{\varepsilon}^{1} = 0$ $\mathbb{P}$-a.s. and a.e. in $[0,T]\times D$. Arguing by induction completes the proof.
	\end{proof}
	
	All steps that were conducted in section~\ref{subsection convergence} are applicable to Algorithm~\ref{Algo2}, except for Lemma~\ref{lemma monotonicity} which does not suit the associated bilinear operator $\hat{B}$ since its variables are not identical. Therefore, a slight adjustment should take place, and it consists of the following: \newline
	In Step 1 of section~\ref{subsection convergence}, $\mathscr{R}(\Vkh^{+})$ shall be substituted by a new operator $\mathscr{S}(\Vkh^{-}, \Vkh^{+}) \coloneqq -\nu\Delta \Vkh^{+} + \hat{B}(\Vkh^{-}, \Vkh^{+})$ and $\mathscr{R}_{0}$ by $\mathscr{S}_{0}$ which is defined by
	\begin{equation*}
		\left[\int_{0}^{T}\langle\mathscr{S}_{0}(t), \varphi\rangle dt\right] = \lim\limits_{\varepsilon, k, h \to 0}\mathbb{E}\left[\int_{0}^{T}\langle \mathscr{S}(\Vkh^{-}, \Vkh^{+}) + \nabla\Pkh^{+}, \Pi_{h}\varphi dt \rangle\right], \ \ \forall \varphi \in \mathcal{V}.
	\end{equation*}
	Equation~\eqref{calc13} remains unchanged because $\langle \mathscr{S}(\Vkh^{-}, \Vkh^{+}), \Vkh^{+} \rangle = \langle \mathscr{R}(\Vkh^{+}), \Vkh^{+}\rangle$, thanks to Proposition~\ref{prop trilinear form}-\ref{trilinear ii}. However, when passing to the limit, term $II_{3}$ in Step 2 is not suitable for $\mathscr{S}_{0}$, which is why it can be modified by employing Proposition~\ref{prop trilinear form}-\ref{trilinear ii} as follows:
	\begin{equation*}
		\begin{aligned}
			II'_{3} =& -2\mathbb{E}\left[\int_{0}^{T}\eta^{-}\left\langle \mathscr{S}(\Vkh^{-}, \Vkh^{+}) + \nabla\Pkh^{+} - \mathscr{S}(\sigma_{h}^{-}, \sigma_{h}^{+}), \sigma_{h}^{+}\right\rangle dt\right] \\&-2 \mathbb{E}\left[\int_{0}^{T}\eta^{-}\left\langle \hat{B}\left(\Vkh^{+} - \Vkh^{-}, \Vkh^{+}\right), \sigma_{h}^{+}\right\rangle dt\right] \coloneqq II'_{3,1} + II'_{3,2}.
		\end{aligned}
	\end{equation*}
	$II'_{3,1}$ goes to $-2\mathbb{E}\left[\int_{0}^{T}\eta(t)\langle \mathscr{S}_{0}(t) - \mathscr{S}(\sigma(t), \sigma(t)), \sigma(t) \rangle dt\right]$ as $\varepsilon, k ,h \to 0$. Consequently, the whole proof of section~\ref{subsection convergence} becomes applicable to Algorithm~\ref{Algo2}, provided that $II'_{3,2}$ goes to $0$. To this end, denote $\Zkh = \Vkh^{+} - \Vkh^{-}$ and utilize Proposition~\ref{prop trilinear form}-\ref{trilinear iii} to ensure:
	\begin{equation*}
		\begin{aligned}
			&\int_{0}^{T}\eta(t)\left\langle \hat{B}\left(\Zkh, \Vkh^{+}\right), \sigma_{h}^{+}\right\rangle dt \lesssim \int_{0}^{T}||\Zkh||^{\frac{1}{2}}_{\mathbb{L}^{2}}||\nabla\Zkh||_{\mathbb{L}^{2}}^{\frac{1}{2}}||\nabla\Vkh^{+}||_{\mathbb{L}^{2}}||\nabla \sigma_{h}^{+}||_{\mathbb{L}^{2}}dt \\&\lesssim k^{\frac{1}{4}}\left(\sum_{m=1}^{M}||\Veps^{m} - \Veps^{m-1}||_{\mathbb{L}^{2}}^{2}\right)^{\frac{1}{4}}\left(k\sum_{m=1}^{M}||\nabla (\Veps^{m} - \Veps^{m-1})||^{2}_{\mathbb{L}^{2}}\right)^{\frac{1}{4}}\left(k\sum_{m=1}^{M}||\nabla\Veps^{m}||^{2}_{\mathbb{L}^{2}}\right)^{\frac{1}{2}},
		\end{aligned}
	\end{equation*}
	thanks to the H{\"o}lder inequality and the high regularity of $\sigma$. Therewith, 
	\begin{equation*}
		II'_{3,2} \lesssim k^{\frac{1}{4}}\mathbb{E}\left[\sum_{m=1}^{M}||\Veps^{m} - \Veps^{m-1}||^{2}_{\mathbb{L}^{2}}\right]^{\frac{1}{4}}\mathbb{E}\left[k\sum_{m=1}^{M}||\nabla(\Veps^{m} - \Veps^{m-1})||^{2}_{\mathbb{L}^{2}}\right]^{\frac{1}{4}}\mathbb{E}\left[k\sum_{m=1}^{M}||\nabla\Veps^{m}||^{2}_{\mathbb{L}^{2}}\right]^{\frac{1}{2}}.
	\end{equation*}
	The first and third expectations are bounded by virtue of Lemma~\ref{lemma a priori estimates}-(i). Additionally, the second expectation, after undergoing a triangle inequality, can be controlled in a similar way provided that $||\nabla v_{h}^{0}||_{\mathbb{L}^{2}}$ is uniformly bounded in $h$. Consequently, $II'_{3,2} \lesssim k^{\frac{1}{4}} \to 0$. With being said, an additional theorem can be given.
	\begin{thm}\label{theorem Algo2}
		Let the hypotheses of Theorem~\ref{theorem 1} be fulfilled and $\left|\left|\nabla v_{h}^{0}\right|\right|_{\mathbb{L}^{2}}$ be uniformly bounded in $h$. Then, there exists a discrete stochastic process $\{\left(\Veps^{m}, \Peps^{m}\right)\}_{m=1}^{M}$ that solves Algorithm~\ref{Algo2} and satisfies Lemmas~\ref{lemma existence+measurability}, \ref{lemma a priori estimates}, \ref{lemma uniqueness linear}. Additionally, if $v_{h}^{0} \to v_{0}$ in $L^{2}(\Omega; \mathbb{L}^{2})$ as $h \to 0$ then, Algorithm~\ref{Algo2} converges toward the unique solution of equations~\ref{eq Navier-Stokes} in the sense of Definition~\ref{definition NSE sol}, as $\varepsilon, k, h \to 0$, provided $\frac{k}{\varepsilon} \to 0$.
	\end{thm}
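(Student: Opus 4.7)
The plan is to import the four conclusions of Theorem~\ref{theorem 1} one by one, adjusting each for the fact that Algorithm~\ref{Algo2} replaces the fully nonlinear trilinear term $\hat{b}(\Veps^m, \Veps^m, \cdot)$ with the bilinear $\hat{b}(\Veps^{m-1}, \Veps^m, \cdot)$. For solvability and measurability, I would first observe that conditionally on $(\Veps^{m-1}, \Peps^{m-1}, \Delta_m W)$ Algorithm~\ref{Algo2} is a linear system on the finite-dimensional space $\mathbb{H}_h \times L_h$. Testing with $(\varphi_h, q_h) = (u, p)$ and using $\hat{b}(\Veps^{m-1}, u, u) = 0$ from Proposition~\ref{prop trilinear form}-\ref{trilinear ii} yields the coercive bound $\|u\|_{\mathbb{L}^2}^2 + k\nu\|\nabla u\|_{\mathbb{L}^2}^2 + \varepsilon\|p\|_{L^2}^2$, enough to apply Lax--Milgram and obtain a unique iterate. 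Expressing it as $(\Veps^m, \Peps^m) = \sigma(\Veps^{m-1}, \Peps^{m-1}, \Delta_m W)$ through a Borel-measurable selector $\sigma$ then gives $\mathcal{F}_{t_m}$-measurability by induction, exactly along the lines of Lemma~\ref{lemma existence+measurability}.

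The a priori estimates of Lemma~\ref{lemma a priori estimates} transfer without change, because the sole usage of the trilinear form there was the vanishing $\hat{b}(\Veps^m, \Veps^m, \Veps^m) = 0$, which still holds in the linearised scheme as $\hat{b}(\Veps^{m-1}, \Veps^m, \Veps^m) = 0$. Uniqueness in the whole of $\Omega$ is already furnished by Lemma~\ref{lemma uniqueness linear}.

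The bulk of the work is adapting section~\ref{subsection convergence} to the bilinear operator $\mathscr{S}(u, w) \coloneqq -\nu\Delta w + \hat{B}(u, w)$ and its weak-limit $\mathscr{S}_0$, as prescribed in the discussion preceding the statement. Step~1 is unchanged: the bounds give weak and weak-$\ast$ limits $v$, $\rchi$, $G_0$, and the divergence-free identification uses only the unchanged second equation of Algorithm~\ref{Algo2}. Step~2 reproduces equation~\eqref{calc13} thanks to $\langle \mathscr{S}(\Vkh^{-}, \Vkh^{+}), \Vkh^{+}\rangle = \langle \mathscr{R}(\Vkh^{+}), \Vkh^{+}\rangle$, and the only genuinely new term $II'_3 = II'_{3,1} + II'_{3,2}$ sees $II'_{3,1}$ converge by the strong convergence of $\sigma_h^+$ and the definition of $\mathscr{S}_0$. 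Once $II'_{3,2} \to 0$ is established, the identifications $\mathscr{S}_0 = \mathscr{R}(v) = \hat{B}(v,v) - \nu\Delta v$ and $G_0 = g(v)$ proceed verbatim from the original proof, and Steps~3 and~4 (verifying $v$ solves \eqref{eq Navier-Stokes} and upgrading to full-sequence convergence via uniqueness of the NSE solution) are formally identical.

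The main obstacle is therefore $II'_{3,2}$. Bounding $\hat{b}(\Zkh, \Vkh^{+}, \sigma_h^{+})$ via Proposition~\ref{prop trilinear form}-\ref{trilinear iii} and the smoothness of $\sigma$, then discrete Hölder in $m$, yields the factor $k^{1/4}$ times three moment-type expectations. The first, $\mathbb{E}[\sum \|\Veps^m - \Veps^{m-1}\|_{\mathbb{L}^2}^2]$, and the third, $\mathbb{E}[k\sum \|\nabla \Veps^m\|_{\mathbb{L}^2}^2]$, are controlled uniformly by Lemma~\ref{lemma a priori estimates}-(i). The middle one, $\mathbb{E}[k\sum \|\nabla(\Veps^m - \Veps^{m-1})\|_{\mathbb{L}^2}^2]$, has no direct a priori bound in Lemma~\ref{lemma a priori estimates}; I would split it by triangle inequality into $4k\sum_{m=1}^M \|\nabla \Veps^m\|_{\mathbb{L}^2}^2 + 4k\|\nabla v_h^0\|_{\mathbb{L}^2}^2$, where the first piece is again controlled by Lemma~\ref{lemma a priori estimates}-(i) and the second is precisely where the new hypothesis that $\|\nabla v_h^0\|_{\mathbb{L}^2}$ be uniformly bounded in $h$ enters. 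Multiplying by $k^{1/4}$ and letting $k, h, \varepsilon \to 0$ closes the argument and completes the transfer of the convergence proof to Algorithm~\ref{Algo2}.
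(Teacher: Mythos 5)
Your proposal is correct and follows essentially the same route as the paper: transfer of Lemmata~\ref{lemma existence+measurability}, \ref{lemma a priori estimates} via $\hat{b}(\Veps^{m-1},\Veps^{m},\Veps^{m})=0$, uniqueness from Lemma~\ref{lemma uniqueness linear}, replacement of $\mathscr{R}$, $\mathscr{R}_{0}$ by $\mathscr{S}$, $\mathscr{S}_{0}$, and the identification that the only new obstacle is $II'_{3,2}$, which you bound by $k^{1/4}$ times expectations controlled by Lemma~\ref{lemma a priori estimates}-(i) plus the uniform bound on $\left|\left|\nabla v_{h}^{0}\right|\right|_{\mathbb{L}^{2}}$, exactly as in Section~\ref{section linearized version}. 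The only (harmless) deviation is that you establish solvability of the linear per-step system directly via coercivity and Lax--Milgram, whereas the paper simply re-invokes the fixed-point argument of Lemma~\ref{lemma existence+measurability}.
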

	One way of ensuring uniform boundedness in $h$ of $\left|\left|\nabla v_{h}^{0}\right|\right|_{\mathbb{L}^{2}}$ is through the Ritz (also known as elliptic) operator $\mathcal{R}_{h} \colon \mathbb{H}^{1}_{0} \to \mathbb{H}_{h}$, which is stable in $\mathbb{H}^{1}_{0}$ (see for instance \cite{thomee2007galerkin}). In other words, setting $v_{h}^{0} = \mathcal{R}_{h}v_{0}$ gets the job done, as long as $v_{0} \in \mathbb{H}_{0}^{1}$. Another way is to use the already defined projection $\Pi_{h}$ which can be an alternative for $\mathcal{R}_{h}$. This is true since the triangulation $\mathcal{T}_{h}$ is quasi-uniform (see \cite[Theorem 4]{Crouzeix1987Thomee}).
	\subsection{How to properly choose $\varepsilon$?}\label{section choice of epsilon}
	Beside the condition $\frac{k}{\varepsilon} \to 0$ which was assumed in Section~\ref{section estimates and convergence} to afford the convergence of Algorithm~\ref{Algo}, some choices of $\varepsilon$ do not seem to perform well. For simplicity's sake and knowing that the Stokes problem establishes an insight into the Navier-Stokes equations, the primary aim of this section will be to evaluate a Stokes version of Algorithm~\ref{Algo} against a non-penalty-based numerical scheme of the following stochastic Stokes problem:
	\begin{equation}\label{eq Stokes problem}
		\begin{cases}
			\partial_{t}u - \nu\Delta u + \nabla p = f + g(u)\dot{W}, \ \ &\mbox{ in } (0,T) \times D, \\
			div(u) = 0, \ \ &\mbox{ in } (0,T) \times D, \\
			u(0, \cdot) = v_{0}, \ \ &\mbox{ in } D.
		\end{cases}
	\end{equation}
	in order to choose the parameter $\varepsilon$ effectively. The finite element spaces $\mathbb{H}_{h}$ and $L_{h}$ will be maintained throughout this section, and the discrete LBB (also known as inf-sup) condition 
	\begin{equation}\label{eq LBB}
		\sup\limits_{\varphi_{h} \in \mathbb{H}_{h}}\frac{\left(div\varphi_{h}, q_{h}\right)}{\left|\left|\nabla\varphi_{h}\right|\right|_{\mathbb{L}^{2}}} \geq \beta \left|\left|q_{h}\right|\right|_{L^{2}}, \ \ \forall q_{h} \in L_{h},
	\end{equation}
	will be required since numerical schemes of Stokes and Navier-Stokes problems which deal with saddle point approximations cannot converge without it. The constant $\beta > 0$ does not depend on the mesh size $h$. With that said, it is now meaningful to state the convective-free version of Algorithm~\ref{Algo}:
	\begin{equation}\label{eq convective-free algo}
		\begin{cases}
			\begin{aligned}
				&\left(\Ueps^{m} - \Ueps^{m-1}, \varphi_{h}\right) + k\nu\left(\nabla \Ueps^{m}, \nabla\varphi_{h}\right) - k\left(\pheps^{m}, div\varphi_{h}\right) \\&\hspace{20pt}= k\langle f^{m}, \varphi_{h} \rangle + \left(g(\Ueps^{m-1})\Delta_{m}W, \varphi_{h}\right), \ \ \forall \varphi_{h} \in \mathbb{H}_{h},
			\end{aligned}\\
			\frac{\varepsilon}{k}\left(\pheps^{m} - \pheps^{m-1}, q_{h}\right) + \left(div\Ueps^{m}, q_{h}\right) = 0, \ \ \forall q_{h} \in L_{h},
		\end{cases}
	\end{equation}
	together with the following saddle point-based numerical scheme of the Stokes problem:
	\begin{equation}\label{eq standard Stokes algo}
		\begin{cases}
			\begin{aligned}
				&\left(U^{m} - U^{m-1}, \varphi_{h}\right) + k\nu\left(\nabla U^{m}, \nabla\varphi_{h}\right) -k\left(p^{m}, div\varphi_{h}\right) \\&\hspace{20pt}= k\left\langle f^{m}, \varphi_{h}\right\rangle + \left(g(U^{m-1})\Delta_{m}W, \varphi_{h}\right), \ \ \forall \varphi_{h} \in \mathbb{H}_{h},
			\end{aligned}\\
			\left(div U^{m}, q_{h}\right) = 0, \ \ \forall q_{h} \in L_{h}.
		\end{cases}
	\end{equation}
	Here, $\Delta_{m}W$ and $f^{m}$ are identical to those of Algorithm~\ref{Algo}, and the starting points $\Ueps^{0} = U^{0} = \Pi_{h}v_{0}$. The convergence analysis of scheme~\eqref{eq standard Stokes algo} along with its convergence rate are provided in \cite{Feng2020Qiu}. To come up with effective and adequate conditions upon the parameter $\varepsilon$, it suffices to investigate the quantity $\left|\left|\Ueps^{m} - U^{m}\right|\right|$. This is logical because if $u$ denotes the solution of Stokes equations~\eqref{eq Stokes problem}, then $\left|\left|\Ueps^{m} - u(t_{m})\right|\right| \leq \left|\left|\Ueps^{m} - U^{m}\right|\right| + \left|\left|U^{m} - u(t_{m})\right|\right|$ grants the rate at which scheme~\eqref{eq convective-free algo} might converge. To this purpose, subtracting equations~\eqref{eq convective-free algo} and \eqref{eq standard Stokes algo} yields
	\begin{equation}\label{calc18}
		\begin{aligned}
			&\left(\Ueps^{m} - U^{m} - (\Ueps^{m-1} - U^{m-1}) - [g(\Ueps^{m-1}) - g(U^{m-1})]\Delta_{m}W, \varphi_{h}\right) + k\left(p^{m} - \pheps^{m}, div\varphi_{h}\right) \\&= k\nu\left(\nabla(U^{m} - \Ueps^{m}), \nabla \varphi_{h}\right) \leq k\nu||\nabla (U^{m} - \Ueps^{m})||_{\mathbb{L}^{2}}||\nabla \varphi_{h}||_{\mathbb{L}^{2}}, \ \ \forall \varphi_{h} \in \mathbb{H}_{h}\backslash \{0\}.
		\end{aligned}
	\end{equation}
	Dividing by $||\nabla \varphi_{h}||_{\mathbb{L}^{2}}$, taking the supremum over $\varphi_{h} \in \mathbb{H}_{h}\backslash \{0\}$ and employing the discrete LBB-condition~\eqref{eq LBB} imply
	\begin{equation}\label{calc19}
		||p^{m} - \pheps^{m}||_{L^{2}} \leq \frac{\nu}{\beta}||\nabla(U^{m} - \Ueps^{m})||_{\mathbb{L}^{2}}, \ \ \forall m \in \{1, \dotsc, M\}.
	\end{equation}
	Estimate~\eqref{calc19} is true because $\omega \mapsto \sup\limits_{\varphi_{h} \in \mathbb{H}_{h}}\frac{\left(\Ueps^{m} - U^{m} - (\Ueps^{m-1} - U^{m-1}) - [g(\Ueps^{m-1}) - g(U^{m-1})]\Delta_{m}W, \varphi_{h}\right)}{||\nabla \varphi_{h}||_{\mathbb{L}^{2}}}$ is non-negative which results from the fact that $\mathbb{H}_{h}$ is a vector space. In other words, this supremum can be roughly seen as the $\mathbb{H}^{-1}$-norm of $\Ueps^{m} - U^{m} - (\Ueps^{m-1} - U^{m-1}) - [g(\Ueps^{m-1}) - g(U^{m-1})]\Delta_{m}W$. On the other hand, setting $\varphi_{h} = \Ueps^{m} - U^{m}$ in equation~\eqref{calc18}, using identity $2(a - b,a) = ||a||^{2}_{\mathbb{L}^{2}} - ||b||^{2}_{\mathbb{L}^{2}} + ||a-b||^{2}_{\mathbb{L}^{2}}$, the Cauchy-Schwarz and Young inequalities return
	\begin{equation}\label{calc20}
		\begin{aligned}
			&\frac{1}{2}||\Ueps^{m} - U^{m}||^{2}_{\mathbb{L}^{2}} - \frac{1}{2}||\Ueps^{m-1} - U^{m-1}||^{2}_{\mathbb{L}^{2}} + k\nu||\nabla(\Ueps^{m} - U^{m})||_{\mathbb{L}^{2}}^{2} \leq k\left(\pheps^{m} - p^{m}, div\Ueps^{m}\right) \\&+ \left([g(\Ueps^{m-1}) - g(U^{m-1})]\Delta_{m}W, \Ueps^{m-1} - U^{m-1}\right) + \frac{1}{2}\left|\left|[g(\Ueps^{m-1}) - g(U^{m-1})]\Delta_{m}W\right|\right|^{2}_{\mathbb{L}^{2}},
		\end{aligned}
	\end{equation}
	where $(\pheps^{m} - p^{m}, div U^{m}) = 0$, thanks to scheme~\eqref{eq standard Stokes algo}. Summing the above equation over $m$ from $1$ to an arbitrary $\ell \in \{1, \dotsc, M\}$, taking its mathematical expectation, employing the It{\^o} isometry to the last term on its right-hand side together with assumption~\ref{S3} and making use of the identity $\Ueps^{0} = U^{0}$ yield
	\begin{equation}\label{calc21}
		\begin{aligned}
			&\mathbb{E}\left[\frac{1}{2}||\Ueps^{\ell} - U^{\ell}||_{\mathbb{L}^{2}}^{2} + k\nu\sum_{m=1}^{\ell}||\nabla (\Ueps^{m} - U^{m})||^{2}_{\mathbb{L}^{2}}\right] \leq \mathbb{E}\left[k\sum_{m=1}^{\ell}\left(\pheps^{m} - p^{m}, div\Ueps^{m}\right)\right] \\&+ \frac{L_{g}^{2}}{2}\mathbb{E}\left[k\sum_{m=1}^{\ell}||\Ueps^{m-1} - U^{m-1}||^{2}_{\mathbb{L}^{2}}\right],
		\end{aligned}
	\end{equation}
	where the mathematical expectation of the penultimate term in equation~\eqref{calc20} vanishes due to assumption~\ref{S3} and the measurability of $\{\Ueps^{m}\}_{m=1}^{M}$ and $\{U^{m}\}^{M}_{m=1}$. Attention will now turn toward the first term on the right-hand side of equation~\eqref{calc21} which will eventually hand the upper-bound in terms of $\varepsilon$. Using equations~\eqref{eq convective-free algo}, one obtains
	\begin{equation}\label{calc22}
		\begin{aligned}
			&J \coloneqq \mathbb{E}\left[k\sum_{m=1}^{\ell}\left(\pheps^{m} - p^{m}, div\Ueps^{m}\right)\right] = -\mathbb{E}\left[\varepsilon\sum_{m=1}^{\ell}\left(\pheps^{m} - \pheps^{m-1}, \pheps^{m} - p^{m}\right)\right] \\&\leq \sqrt{\varepsilon}\mathbb{E}\left[3\varepsilon\sum_{m=1}^{\ell}||\pheps^{m} - \pheps^{m-1}||^{2}_{L^{2}}\right]^{\frac{1}{2}}\mathbb{E}\left[\max\limits_{1 \leq m \leq \ell}||\pheps^{m}- p^{m}||^{2}_{L^{2}}\right]^{\frac{1}{2}} \\&\leq \frac{\sqrt{3C_{1}}\nu}{\beta}\sqrt{\varepsilon}\mathbb{E}\left[\max\limits_{1 \leq m \leq M}||\nabla(\Ueps^{m} - U^{m})||^{2}_{\mathbb{L}^{2}}\right]^{\frac{1}{2}} \leq \frac{\sqrt{3\mathscr{C}}C_{1}\nu}{\beta}\frac{\sqrt{\varepsilon}}{h},
		\end{aligned}
	\end{equation}
	thanks to the Cauchy-Schwarz inequality, estimate~\eqref{eq sum-Jensen-type}, \eqref{calc19}, Lemma~\ref{lemma a priori estimates}-(i)-(ii), and the inverse inequality~\eqref{eq inverse inequality}. The bound of $\mathbb{E}\left[\max\limits_{1 \leq m \leq M}||U^{m}||_{\mathbb{L}^{2}}^{2}\right]$ in equation~\eqref{calc22} is not carried out herein, but can be found for instance in \cite[Lemma 3.1]{brzezniak2013finite}. Finally, plug the result of equation~\eqref{calc22} in estimate~\eqref{calc21} and make use of the discrete Gr{\"o}nwall inequality to achieve
	\begin{equation}\label{calc23}
		\frac{1}{2}\max\limits_{1 \leq m \leq M}\mathbb{E}\left[\left|\left|\Ueps^{m} - U^{m}\right|\right|_{\mathbb{L}^{2}}^{2}\right] + \mathbb{E}\left[k\nu\sum_{m=1}^{M}\left|\left|\nabla(\Ueps^{m} - U^{m})\right|\right|^{2}_{\mathbb{L}^{2}}\right] \leq \tilde{C}\frac{\sqrt{\varepsilon}}{h},
	\end{equation}
	for some constant $\tilde{C} > 0$ depending only on $\beta, C_{1}, \nu, L_{g}, \mathscr{C}$ and $T$.
	
	Estimate~\eqref{calc23} appears to have the best upper-bound amongst the other possible ways of estimation. Besides, some calculation techniques may be inconsistent with the assumption $\frac{k}{\varepsilon} \to 0$. For instance, by Young's inequality, it holds that $J \leq \sqrt{\varepsilon}\mathbb{E}\left[\sum_{m=1}^{\ell}\sqrt{\varepsilon}\left|\left|\pheps^{m} - \pheps^{m-1}\right|\right|^{2}_{L^{2}}\right] + \frac{\nu^{2}\sqrt{\varepsilon}}{\beta^{2}}\mathbb{E}\left[\sum_{m=1}^{\ell}\left|\left|\nabla (\Ueps^{m} - U^{m})\right|\right|^{2}_{\mathbb{L}^{2}}\right]$, thanks to estimate~\eqref{calc19}. The first term is bounded by $C_{1}\sqrt{\varepsilon}$ by virtue of Lemma~\ref{lemma a priori estimates}. However, the second term needs to be absorbed in the left-hand side of equation~\eqref{calc21}. To this end, the assumption $k\nu - \frac{\nu^{2}\sqrt{\varepsilon}}{\beta^{2}} > 0$ must be imposed. In other words, $\sqrt{\frac{k}{\varepsilon}} > \frac{\nu}{\beta^{2}\sqrt{k}}$, which obviously fulfills the opposite of $\frac{k}{\varepsilon} \to 0$.
	\section{Numerical experiments and conclusion}\label{section numerical exp.}
	The implementation within this section will be carried out through Algorithm~\ref{Algo2} and a saddle point based-numerical scheme \cite[Algorithm 3]{brzezniak2013finite}: 
	\begin{algo}\label{Algo Carelli}
		Let $M \in \mathbb{N}$ and $V^{0} = v_{h}^{0} \in \mathbb{H}_{h}$ be given. For every $m \in \{1, \dotsc, M\}$, find an $\mathbb{H}_{h} \times L_{h}$-valued $\left(V^{m}, \Pi^{m}\right)$ such that
		\begin{equation*}
			\begin{cases}
				\begin{aligned}
					&\left(V^{m} - V^{m-1}, \varphi_{h}\right) + k\nu\left(\nabla V^{m}, \nabla\varphi_{h}\right) + k\hat{b}(V^{m-1}, V^{m}, \varphi_{h}) - k\left(\Pi^{m}, div\varphi_{h}\right) \\&\hspace{20pt}= k\langle f^{m}, \varphi_{h} \rangle + \left(g(V^{m-1})\Delta_{m}W, \varphi_{h}\right), \ \ \forall \varphi_{h} \in \mathbb{H}_{h},
				\end{aligned}\\
				\left(divV^{m}, q_{h}\right) = 0, \ \ \forall q_{h} \in L_{h},
			\end{cases}
		\end{equation*}
	\end{algo}
	which will play the reference role with respect to the values of the parameter $\varepsilon$. The domain's meshing is carried out through the open source finite element mesh generator Gmsh~\cite{geuzaine2009gmsh}, the implementation of the aforementioned algorithms is executed by the open source finite element software FEniCS~\cite{LoggMardalEtAl2012a}, and the visualization is ensured via Paraview~\cite{Ahrens2005Geveci}. The simulation's configuration down below is set as follows: $T = 1$, $\nu = 1$, $h = 0.16$, $\varepsilon = h^{2 + \delta}$, $k = \varepsilon^{1 + \delta}$, with $\delta = 0.1$ to guarantee that $\frac{k}{\varepsilon} \to 0$ and $\frac{\sqrt{\varepsilon}}{h}$ is small enough. For the sake of comparison, the space discretization will be conducted by the lower order Taylor-Hood ($P_{2}/P_{1}$) finite element for both algorithms~\ref{Algo2} and \ref{Algo Carelli}. The initial data $u_{0}$ and $p_{0}$ are set to $0$ which means that $v_{h}^{0}= p_{h}^{0} = 0$. The domain $D$ is an $L$-shaped geometry whose figure and mesh are the following:
	\begin{figure}[H]
		\centering
		\begin{subfigure}{0.375\linewidth}
			\vspace{5pt}\includegraphics[width=0.95\linewidth]{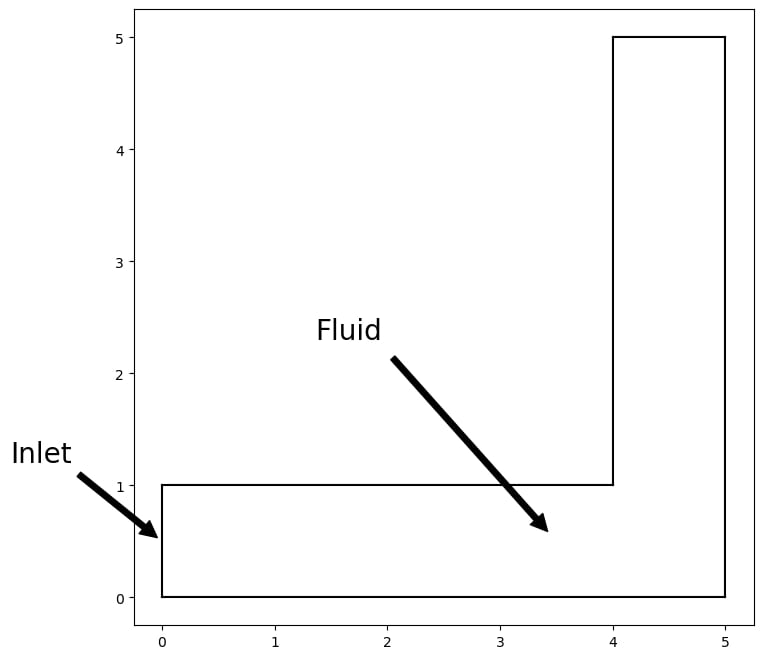}
		\end{subfigure}
		\begin{subfigure}{0.4\linewidth}
			\includegraphics[width=0.94\linewidth]{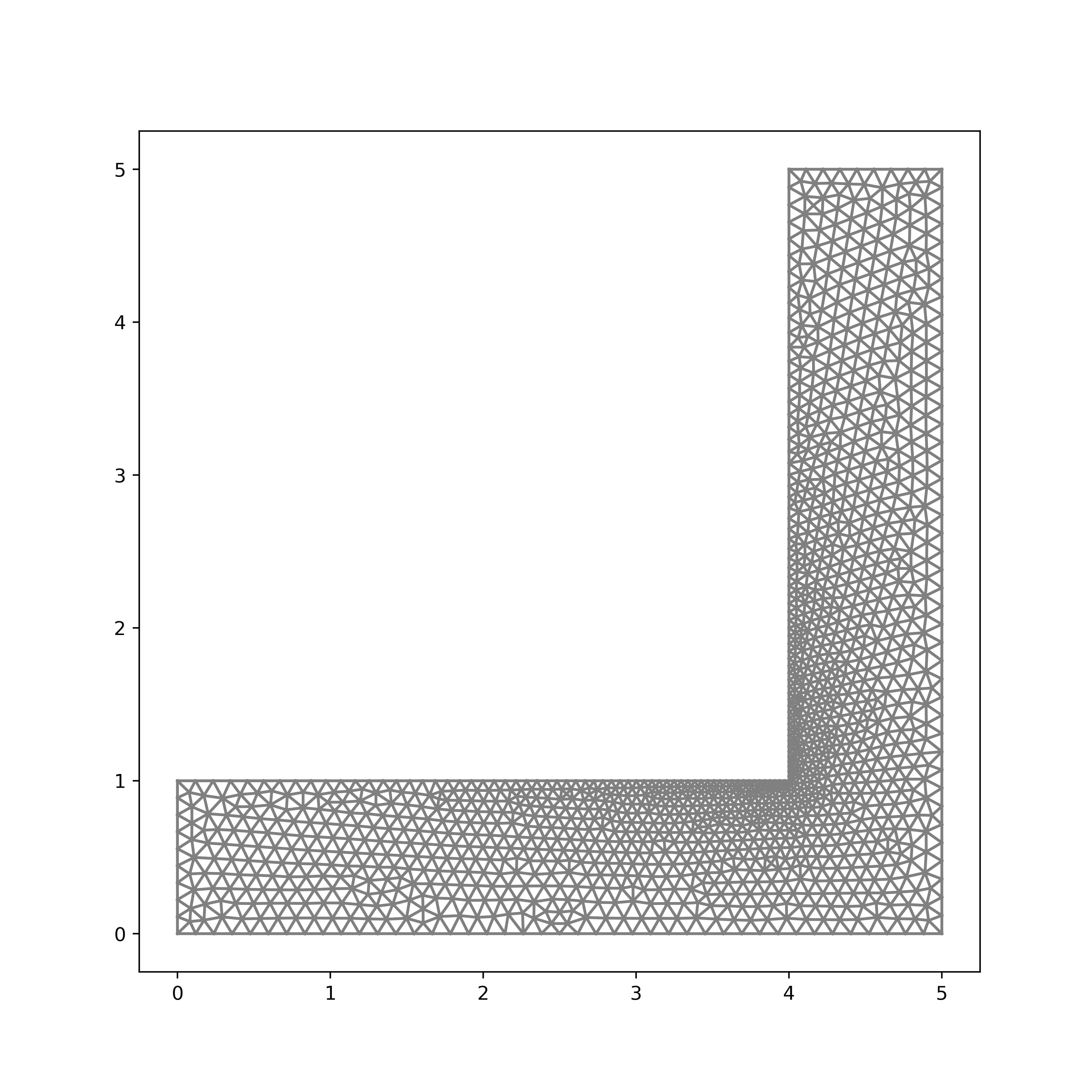}
		\end{subfigure}
		\caption{The domain $D$ and its mesh}
	\end{figure}
	\hspace{-18pt}The boundary condition 
	\begin{equation*}
		v(x, y) = 
		\begin{cases}
			(1, 0) &\mbox{ if } (x, y) \in \{0\}\times [0, 1],\\
			(0, 0) &\mbox{ elsewhere},
		\end{cases}
	\end{equation*}
	is non-homogeneous, which is possible since a simple lifting technique can take the problem's boundary condition back to a homogeneous setting. The source term $f$ takes on the value $(0, 0)$ and the diffusion coefficient $g = 1$ i.e. it is an additive noise. The Wiener increment $\Delta_{m}W$ is approximated as follows: let $J \in \mathbb{N}$ be non-zero, and $W_{1}$, $W_{2}$ be two independent $H^{1}_{0}(D)$-valued Wiener processes  such that $W = (W_{1}, W_{2})$. Then,
	\begin{equation*}
		\Delta_{m}W_{\ell} \approx \sqrt{k}\sum_{i, j =1}^{J}\sqrt{\lambda_{i,j}^{\ell}}\xi_{i,j}^{\ell, m}e_{i,j}, \ \ \ell \in \{1, 2\}.
	\end{equation*}
	The parameter $J$ takes on the value $5$, $\lambda^{\ell}_{i,j} = \frac{1}{(i + j)^{2}}$ for all $i, j \in \mathbb{N}$, $\left\{(\xi_{i,j}^{1, m}, \xi_{i,j}^{2, m})\right\}_{i, j}^{m}$ is a family of independent identically distributed normal random variables, and $e_{i,j}(x,y) = \frac{2}{5}sin(i\pi x/5)sin(j\pi y/5)$ for all $i, j \in \mathbb{N}$. Although $\{e_{i,j}\}_{i,j}$ may not be the best choice for an $L$-shaped domain (because they represent the Laplace eigenfunctions on the square $(0, 5)^{2}$ with a Dirichlet boundary condition), they can be thought of herein as a restriction to $D$. The explicit formula of the Laplace eigenfunctions on an $L$-shaped domain is unknown as it is explained in \cite{Reid1965}. With all that being said, it is now possible to exhibit the simulation results:
	\begin{figure}[H]
		\centering
		\begin{subfigure}{0.3\linewidth}
			\includegraphics[width=\linewidth]{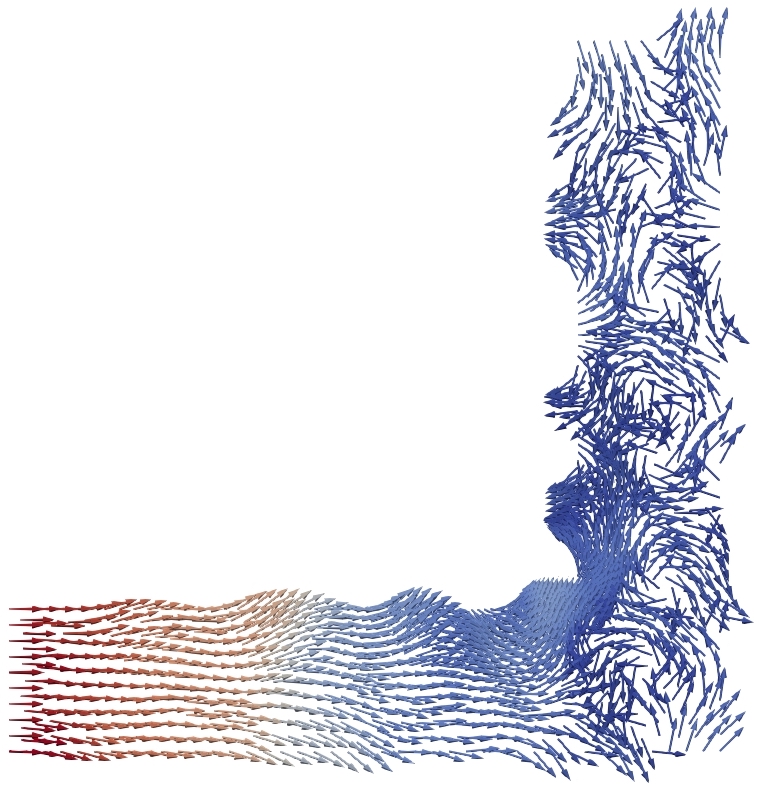}
		\end{subfigure}
		\hspace{20pt}\begin{subfigure}{0.3\linewidth}
			\includegraphics[width=\linewidth]{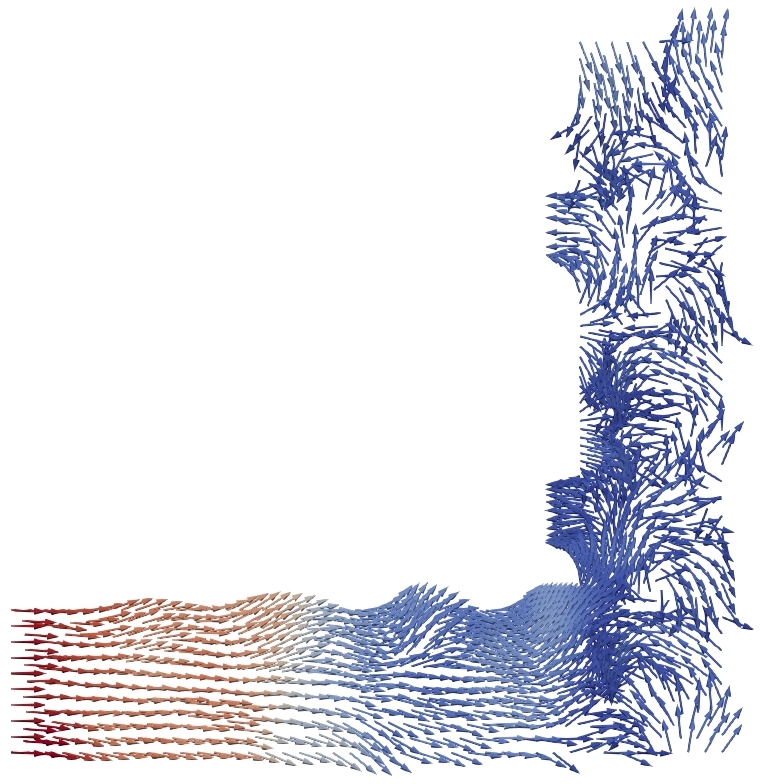}
		\end{subfigure}
		\begin{subfigure}{0.2\linewidth}
			\centering
			\includegraphics[width=0.41\linewidth]{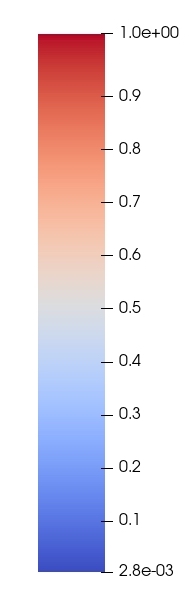}
		\end{subfigure}
		\caption{One realization of $V^{M}$ (left) and $\Veps^{M}$ (right) at time $T=1$ for $\varepsilon = 0.021$}
	\end{figure}
	\hspace{-18pt}As $\varepsilon$ gets smaller, the difference between $V^{m}$ and $\Veps^{m}$ becomes indistinguishable. This fact is illustrated in an accurate way down below where the relationship between $\varepsilon$ and the error $\mathbb{E}\left[\left|\left|V^{M} - \Veps^{M}\right|\right|^{2}_{\mathbb{L}^{2}}\right]$ is exposed:
	\begin{figure}[H]
		\centering
		\begin{minipage}{0.45\linewidth}
			\includegraphics[width=\linewidth]{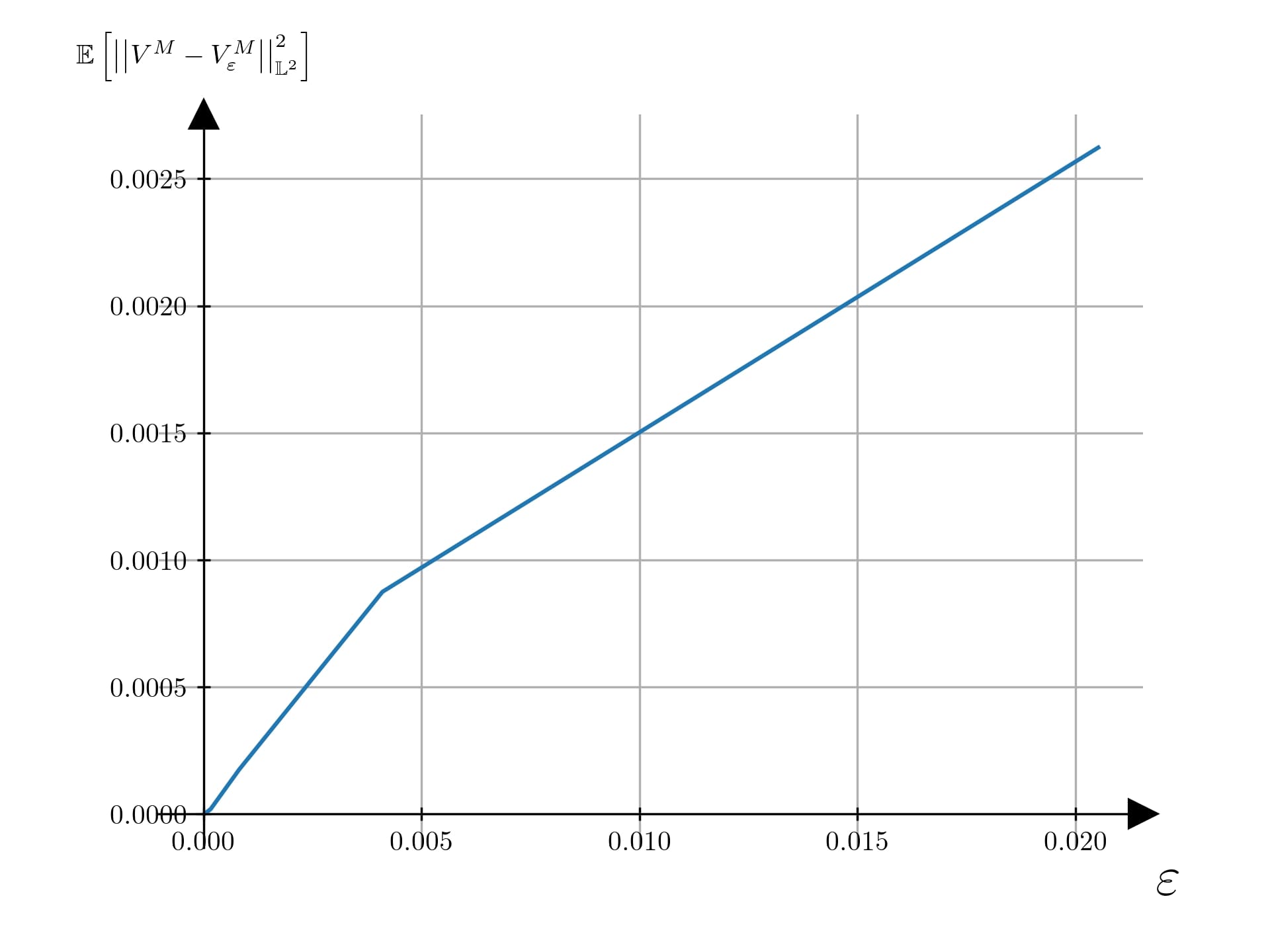}
		\end{minipage}
		\begin{minipage}{0.4\linewidth}
			\centering
			\small
			\hspace{20pt}\begin{tabular}{ c|c }
				& $Var\left(\left|\left|V^{M} -\Veps^{M}\right|\right|_{\mathbb{L}^{2}}\right)$ \\
				\hline
				$\varepsilon$ & $2.9\times 10^{-4}$ \\
				$\varepsilon/5$ & $1.07\times 10^{-4}$ \\
				$\varepsilon/25$ & $2.5\times 10^{-5}$ \\
				$\varepsilon/125$ & $3.2\times 10^{-6}$ \\
				$\varepsilon/625$ & $2.7\times 10^{-7}$
			\end{tabular}
		\end{minipage}
		\caption{Error and error-variance in terms of $\varepsilon$}\label{figure Graph}
	\end{figure}
	\hspace{-18pt}The computed error in figure~\ref{figure Graph} uses a Monte-Carlo method with $1000$ realizations. The obtained curve was expected; it emphasizes the fact that $\varepsilon$ should be taken as small as possible in order to guarantee accurate outcomes.
	\paragraph{\textbf{Acknowledgment}:} The author would like to thank Prof. Ludovic Gouden{\`e}ge (Ecole CentraleSup{\'e}lec) for his valuable comments.
	
	\printbibliography
\end{document}